\numberwithin{equation}{section}
\newtheorem{theorem}{Theorem}[section]
\newtheorem{theodef}[theorem]{Theorem/Definition}
\newtheorem{proposition}[theorem]{Proposition}
\newtheorem{lemma}[theorem]{Lemma}
\newtheorem{definition}[theorem]{Definition}
\newtheorem{corollary}[theorem]{Corollary}
\newtheorem{remark}[theorem]{Remark}
\DeclareMathOperator{\Hom}{Hom}
\DeclareMathOperator{\End}{End}
\DeclareMathOperator{\Sym}{Sym}
\DeclareMathOperator{\iHom}{\mathscr{H}\text{\kern -3pt {\calligra\large om}}\,}
\DeclareMathOperator{\iEnd}{\mathscr{E}\text{\kern -3pt {\calligra\large nd}}\,}
\DeclareMathOperator{\iExt}{\mathscr{E}\text{\kern -3pt {\calligra\large xt}}\,}
\DeclareMathOperator{\iTor}{\mathscr{T}\text{\kern -3pt {\calligra\large or}}\,}
\DeclareMathOperator{\colim}{colim}
\DeclareMathOperator{\HH}{HH_{\bm{\cdot}}}
\tikzset{%
    symbol/.style={%
        draw=none,
        every to/.append style={%
            edge node={node [sloped, allow upside down, auto=false]{$#1$}}}
    }
}
\newcommand{\abs}[1]{\left\lvert#1\right\rvert}
\newcommand{\Mod}[1]{#1\textrm{-}\textsf{Mod}}
\newcommand{\Spec}[1]{\mathrm{Spec} \, #1}
\newcommand{\Spet}[1]{\mathrm{Sp}\textrm{ét} \, #1}
\newcommand\opposite[1]{{#1}^{\mathrm{op}}}
\newcommand{\id}{\mathrm{id}}
\newcommand{\QC}[1]{\mathrm{QCoh(#1)}}
\title{Grothendieck Duality via Diagonally Supported Sheaves}
\author{Andy Jiang}
\address{Department of Mathematics,University of Michigan, 530 Church Street,
  Ann Arbor, MI 48109}
\email{ndjiang@umich.edu}
\begin{document}
\begin{abstract}
  Following a formula found in the \cite{AILN} and ideas of \cite{N2} and \cite{Khus}, we indicate that Grothendieck duality for finite tor-amplitude
  maps can be developed from scratch via the
  formula $f^! := \delta^*\pi_1^{\times}f^*$. Our strategy centers on the subcategory $\Gamma_{\Delta}(\QC{X \times X})$ of quasicoherent
  sheaves on $X \times X$ supported on the diagonal. By exclusively using this subcategory instead of the full category $\QC{X \times X}$ 
  we give systematic categorical proofs of results in Grothendieck duality and reprove many formulas found in \cite{N1}. We also relate
  some results in Grothendieck duality with properties of the sheaf of (derived) Grothendieck differential operators. 
\end{abstract}
\maketitle

\section*{Introduction}
\textbf{Motivation}:
Suppose $X$ is a finite-type, separated, flat
scheme over a Noetherian base scheme $S$, Proposition 3.3 of \cite{N1} (building on Theorem 4.6 of \cite{AILN} and Lemma 3.2.1 of \cite{ILN}),
shows the isomorphism
\begin{equation} \label{shriekformula}
  f^!_c \cong \delta^*\pi_1^{\times}f^*
\end{equation}
where the maps are defined as in the diagram
\begin{equation}\label{eq3}
  \begin{tikzcd}
    X \arrow{dr}{\delta} \\
    & X \times_S X  \arrow{r}{\pi_2} \arrow{dd}{\pi_1} & X \arrow{dd}{f} \\ \\
    & X \arrow{r}{f} & S\\
  \end{tikzcd}
\end{equation}
Here, $f^!_c$ denotes the exceptional pullback functor in Grothendieck duality, defined in a classical way, and
$\pi_1^{\times}$ denotes the right adjoint to the pushforward functor $\pi_{1,*}$. This formula has the advantage over classical definitions that
it does not depend on a choice of a compactification of $X$. Inspired by this,
one may ask if it is possible to develop Grothendieck Duality from scratch using this formula, and thus bypassing the issue of compactifications.
This was the approach taken by the thesis of Hafiz Khusyairi \cite{Khus}, which proved many properties of (\ref{shriekformula}) in the situation of flat morphisms, as above.
In \cite{N2}, Neeman extends this work and gives some indication that (\ref{shriekformula}) can be used as a foundation for Grothendieck duality--proving Serre duality without resorting
to any existing theory of Grothendieck duality.

One source of complication for developing Grothendieck duality using (\ref{shriekformula}) appears in Section 4.2 of \cite{N2}.
We need to show that the right hand side of (\ref{shriekformula}) is local on $X$. Namely,
if we write $f^!_r$ for the right hand side of (\ref{shriekformula}) and $u : U \to X$ is an open immersion, we need to show that
\[u^*f^!_r \cong (uf)^!_r\]
The majority of Section 4.2 of \cite{N2} is devoted to a proof of this fact. In this paper, we try to provide
a more conceptual framework to understand statements like this one and their proofs, by relying everywhere on the category $\Gamma_{\Delta}(\QC{X \times_S X})$.
We note that the category $\Gamma_{\Delta}(\QC{X \times_S X})$ appears in Neeman's writing as well. However, though he makes use of it
more sparingly, we aim to use this category whenever possible. In an upcoming work \cite{upcoming}, we will discuss this category as a categorified version
of the ring of differential operators, following the works of \cite{Ber1} and \cite{Ber2}.

The key point is that the formula $\delta^*\pi_1^{\times}f^*$ is not inherently local on $X$, due to the appearance of the (non colimit-preserving) functor $\pi_1^{\times}$.
Additionally, the sheaf of categories $U \mapsto \QC{U \times_S U}$ is not a quasicoherent sheaf of categories on $X$ (in a precise sense which we define in Proposition \ref{quasicoherent}).
However, as we will see in Proposition \ref{quasicoherent},
the sheaf $U \mapsto \Gamma_{\Delta}(\QC{U \times_S U}$ is, where the latter category is the full subcategory of $\QC{U \times_S U}$ supported on the diagonal.
Additionally, as $\delta^*$ only sees the part of the quasicoherent sheaf on $X \times_S X$
that is supported on the diagonal, we can actually rewrite $\delta^*\pi_1^{\times}f^*$ in a way which bypasses the category $\QC{X \times X}$.
Namely,
\[\delta^*\pi_1^{\times}f^* \cong \tilde{\delta}^*\tilde{\pi}_1^{\times}f^*\]
where
\[\tilde{\delta}^* : \Gamma_{\Delta}(\QC{X \times_S X}) \to \QC{X}\]
and
\[\tilde{\pi}^{\times}_1 : \QC{X} \to \Gamma_{\Delta}(\QC{X \times_S X})\]
are analogues of $\delta^*$ and $\pi_1^{\times}$ involving only $\Gamma_{\Delta}(\QC{X \times_S X})$ (see Section 2 for the precise definitions).
Therefore, we achieve a rewriting
of $f^!_r$ which is \textit{manifestly} local. All the technical inputs are cleanly packaged into two statements:
\begin{enumerate}
\item{$U \mapsto \Gamma_{\Delta}(\QC{X \times_S X})$ is quasi-coherent sheaf of categories} (Proposition \ref{quasicoherent})
\item{$\tilde{\pi}_1^{\times}$ is a quasicoherent map} (Proposition \ref{qcoh2})
\end{enumerate}

In fact, more can be said about the functor $\tilde{\pi}^{\times}_1$. It is related to the theory of $D$-modules, in a way which we will explain in the
``Relation to $D$-modules''
subsection below.

\textbf{Results}: 
Let $X$ be a spectral Noetherian scheme which is separated, locally almost of finite presentation,
and finite tor-amplitude over a base homologically bounded spectral Noetherian scheme $S$ throughout. We remind the reader
that the fibre products of schemes will be implicitly derived from now on.

We show that the category $\Gamma_{\Delta}(\QC{X \times_S X})$ is a quasicoherent sheaf of categories on $X$ in the following sense
(see Proposition \ref{quasicoherent} in the main text).
\begin{proposition}
  For an étale map $u : U \to X$, we have
  \[\QC{U} \otimes_{\QC{X}} \Gamma_{\Delta}(\QC{X \times_S X}) \cong \Gamma_{\Delta}(\QC{U \times_S U})\]
  where $\QC{X}$ acts on $\QC{U}$ via $j^*$ and $\QC{X}$ acts on $\Gamma_{\Delta}(\QC{X \times_S X})$
  via $\Gamma_{\Delta}\pi_1^*$.
\end{proposition}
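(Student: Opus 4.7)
The plan is to exhibit both sides of the claimed equivalence as quasicoherent sheaves on a common formal scheme inside $U \times_S X$, by factoring the map $u \times u : U \times_S U \to X \times_S X$ as the composition $(u \times \id_X) \circ (\id_U \times u)$ and handling each étale factor separately.

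First, using that $\QC{-}$ is a quasicoherent sheaf of categories in the étale topology, base change along the étale map $u \times \id_X$ gives $\QC{U} \otimes_{\QC{X}} \QC{X \times_S X} \simeq \QC{U \times_S X}$. Since $\Gamma_{\Delta}(\QC{X \times_S X})$ is a tensor ideal---the full subcategory annihilated by pullbacks from the open complement of the diagonal---this identification should restrict to
\[
\QC{U} \otimes_{\QC{X}} \Gamma_{\Delta}(\QC{X \times_S X}) \simeq \Gamma_{\Gamma(u)}(\QC{U \times_S X}),
\]
where $\Gamma(u) = (u \times \id_X)^{-1}(\Delta_{X/S}) \subset U \times_S X$ is the graph of $u$, a closed subscheme isomorphic to $U$ by separatedness of $X/S$.

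Next, I would use pushforward along the étale morphism $\id_U \times u : U \times_S U \to U \times_S X$ to identify $\Gamma_{\Delta}(\QC{U \times_S U}) \simeq \Gamma_{\Gamma(u)}(\QC{U \times_S X})$. Étaleness of $u$ ensures that the preimage $(\id_U \times u)^{-1}(\Gamma(u)) = U \times_X U$ splits into clopen components $\Delta_{U/S}(U) \sqcup V$, and the restriction of $\id_U \times u$ to $\Delta_{U/S}(U)$ identifies it with $\Gamma(u)$ via the identity on $U$. By étale base change for formal completions, the induced morphism of formal schemes $(U \times_S U)^{\wedge}_{\Delta_{U/S}} \to (U \times_S X)^{\wedge}_{\Gamma(u)}$ is étale and an isomorphism on underlying reduced subschemes, hence an isomorphism. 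Combining this with the identification from the previous step yields the proposition, and one verifies that the resulting equivalence is the one induced by $(u \times u)^*$ composed with $\Gamma_{\Delta}$.

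The hard part will be justifying the compatibility of $\Gamma_{\Delta}$ with the base change $\QC{U} \otimes_{\QC{X}} (-)$---namely, that for a $\QC{X}$-module category $\mathcal{C}$ and a closed subscheme $Z \subset X \times_S X$, one has $\QC{U} \otimes_{\QC{X}} \Gamma_Z\mathcal{C} \simeq \Gamma_{Z \times_{X \times_S X}(U \times_S X)}(\QC{U} \otimes_{\QC{X}} \mathcal{C})$. This should follow from dualizability of $\QC{U}$ over $\QC{X}$ for étale $u$ together with the étale stability of open complements, but requires care with the smashing-localization formalism in the spectral setting. A related subtlety is compatibly identifying $\Gamma_Z(\QC{Y})$ with quasicoherent sheaves on the formal completion $Y^{\wedge}_Z$, which is needed to make the formal-completion argument in the second step precise.
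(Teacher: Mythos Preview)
Your approach is essentially the same as the paper's: factor $u\times u$ through $U\times_S X$ and handle the two étale factors $u\times\id$ and $\id\times u$ separately, arriving at $\Gamma_{\Gamma(u)}(\QC{U\times_S X})$ as the common intermediary.

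The differences are in how the two steps are executed. For the first step, instead of invoking smashing localizations or tensor ideals, the paper observes that $\Gamma_\Delta(\QC{X\times_S X})\to\QC{X\times_S X}\to\QC{(X\times_S X)\setminus\Delta}$ is a split-exact sequence in the sense of its Appendix~B, and that split-exactness is a purely $2$-categorical condition (adjunctions plus an exact triangle of endofunctors), hence preserved by any tensor product of module categories. This disposes of what you call the ``hard part'' in one line, with no dualizability or formal-completion input required. For the second step, the paper proves a standalone statement (its Proposition~\ref{AGinput2}): if a closed immersion $Z\hookrightarrow X$ factors through an étale $U\to X$, then $\Gamma_Z(\QC{X})\cong\Gamma_Z(\QC{U})$. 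The proof there avoids formal schemes entirely, arguing instead via conservativity of $\tilde z^*$ on $\Gamma_Z$ and the fact that the diagonal $Z\to Z\times_X U$ is a clopen immersion; your formal-completion argument is a valid alternative but brings in machinery the paper does not need.
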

This in particular shows that $\Gamma_{\Delta}(\QC{X \times_S X})$ satisfies étale descent.
Next, we show that the functor $\tilde{\pi}_1^{\times}$ is a quasicoherent map of categories for formal reasons, see Proposition \ref{qcoh2}
in the main text.
\begin{proposition}
  For an étale map $u : U \to X$, 
  \[\tilde{\pi}_{1,U}^{\times} :\QC{U} \to \Gamma_{\Delta}(\QC{U \times_S U})\]
  is $\QC{U}$ linear and
  agrees with $\tilde{\pi}_{1,X}^{\times}$ for $X$ base changed to $U$, i.e. tensored with $\QC{U}$ over $\QC{X}$.
\end{proposition}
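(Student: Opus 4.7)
The plan is to realize $\tilde{\pi}_1^{\times}$ as a right adjoint in the $\infty$-category of presentable $\QC{X}$-linear categories and $\QC{X}$-linear colimit-preserving functors, and then invoke the general principle that adjunctions internal to this $\infty$-category automatically base change along the symmetric monoidal extension $\QC{X} \to \QC{U}$.

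The first step is to verify that the restricted push-forward $\tilde{\pi}_{1,*} : \Gamma_{\Delta}(\QC{X \times_S X}) \to \QC{X}$ is $\QC{X}$-linear and colimit-preserving. Linearity follows from the projection formula: with $\QC{X}$ acting on $\Gamma_{\Delta}(\QC{X \times_S X})$ via $\Gamma_{\Delta}\pi_1^*$, the identity $\pi_{1,*}(\pi_1^*(M) \otimes F) \cong M \otimes \pi_{1,*}(F)$ holds for $F$ diagonally supported. Colimit preservation is the essential improvement gained by restricting to $\Gamma_{\Delta}$: although $\pi_{1,*}$ is not colimit-preserving on the full $\QC{X \times_S X}$, on diagonally-supported sheaves the effective geometry is that of $\pi_1 \circ \delta = \id_X$, which is trivially proper, so $\tilde{\pi}_{1,*}$ preserves all colimits. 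Granted this, the right adjoint $\tilde{\pi}_1^{\times}$ exists; $\QC{X}$-linearity of $\tilde{\pi}_1^{\times}$ can then be checked by verifying the projection formula $\tilde{\pi}_1^{\times}(M \otimes N) \cong \Gamma_{\Delta}\pi_1^*(M) \otimes \tilde{\pi}_1^{\times}(N)$ via Yoneda against a varying object of $\Gamma_{\Delta}(\QC{X \times_S X})$, using the $\QC{X}$-linearity of $\tilde{\pi}_{1,*}$ together with the fact that tensoring by $\Gamma_{\Delta}\pi_1^*(M)$ preserves $\Gamma_{\Delta}$. Applied with $X$ replaced by an arbitrary étale $U \to X$, this establishes the first clause of the proposition.

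For the base change claim, I would use that the 2-functor $\QC{U} \otimes_{\QC{X}} (-)$ on presentable $\QC{X}$-linear categories is symmetric monoidal and therefore preserves internal adjunctions: tensoring $\tilde{\pi}_{1,*,X} \dashv \tilde{\pi}_{1,X}^{\times}$ with $\QC{U}$ yields an adjunction in $\QC{U}$-linear presentable categories whose left adjoint is $\tilde{\pi}_{1,*,X} \otimes_{\QC{X}} \QC{U}$. By the preceding proposition, $\Gamma_{\Delta}(\QC{X \times_S X}) \otimes_{\QC{X}} \QC{U} \cong \Gamma_{\Delta}(\QC{U \times_S U})$; under this identification, flat (étale) base change for $\pi_{1,*}$ identifies the tensored left adjoint with $\tilde{\pi}_{1,*,U}$. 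Uniqueness of right adjoints then forces $\tilde{\pi}_{1,X}^{\times} \otimes_{\QC{X}} \QC{U} \cong \tilde{\pi}_{1,U}^{\times}$, completing the proof. The main obstacle I anticipate is the colimit-preservation of $\tilde{\pi}_{1,*}$ on $\Gamma_{\Delta}$, which requires a careful analysis of how diagonal support interacts with pushforward along $\pi_1$; this is precisely the conceptual phenomenon the framework of the paper is designed to exploit, and once secured, the remainder of the argument proceeds by formal category theory in $\QC{X}$-linear presentable categories.
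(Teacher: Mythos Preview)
Your overall strategy---realize the adjunction $\tilde{\pi}_{1,*} \dashv \tilde{\pi}_1^{\times}$ inside $\Mod{\QC{X}}^L$ and then base change it to $U$---is exactly the paper's approach. However, you have misidentified the key technical input, and this confusion runs through the whole proposal.

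You write that ``$\pi_{1,*}$ is not colimit-preserving on the full $\QC{X \times_S X}$'' and that restricting to $\Gamma_{\Delta}$ is what buys colimit-preservation of $\tilde{\pi}_{1,*}$. This is false: $\pi_1$ is qcqs (indeed separated between Noetherian schemes), so $\pi_{1,*}$ already preserves all colimits on the full category. Consequently $\tilde{\pi}_{1,*}$ is colimit-preserving for free, and your anticipated ``main obstacle'' is not an obstacle at all. The actual issue is the colimit-preservation of the \emph{right adjoint} $\tilde{\pi}_1^{\times}$: this is what is needed for the adjunction to live in $\Mod{\QC{X}}^L$ and hence to be preserved by $\QC{U} \otimes_{\QC{X}} (-)$. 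The paper secures this via Theorem~\ref{AGinput1}, which shows $\tilde{\pi}_{1,*}$ preserves \emph{compact} objects (using that diagonally supported perfect complexes push forward to almost perfect, finite tor-amplitude, hence perfect objects); colimit-preservation and $\QC{X}$-linearity of $\tilde{\pi}_1^{\times}$ then follow formally from the enriched adjoint functor theorem (Theorem~\ref{Proj1}). Your proposed Yoneda verification of the projection formula for $\tilde{\pi}_1^{\times}$ does not go through as written, since tensoring by a non-dualizable $M$ does not commute with $\Hom$; the standard fix is precisely to reduce to compact (hence dualizable) $M$, which again requires knowing $\tilde{\pi}_1^{\times}$ is colimit-preserving. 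Once you replace ``$\tilde{\pi}_{1,*}$ preserves colimits'' with ``$\tilde{\pi}_{1,*}$ preserves compacts'' throughout, your argument becomes correct and coincides with the paper's.
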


The two propositions above provide the backbone for our results developing Grothendieck duality
using \ref{shriekformula}. We start with the definition (Definition \ref{defshriek} in the main text),

\begin{definition}
  Suppose $f : X \to S$ is a map of spectral Noetherian schemes which is finite tor-amplitude, locally almost of finite-presentation, and separated,
  and that both $X$ and $S$ are homologically bounded. Then, we define
  \[f^! := \delta^*\pi_1^{\times}f^* : \QC{S} \to \QC{X}\]
  where the maps are as shown in the diagram (\ref{eq3}).
\end{definition}

We prove the exceptional pullback (also referred to as upper shriek) functor defined above satisfies the following properties. The following is contained in
Equation (\ref{eq2}), Corollary \ref{csk}, Proposition \ref{csk2}, and Theorem \ref{csk3} in the text.
Parts of this theorem are contained in \cite{N2}, but we take a slightly different approach.
\begin{theorem}
  Suppose $f : X \to S$ is a map of homologically bounded
  spectral Noetherian schemes which is finite tor-amplitude, locally almost of finite-presentation, and separated.
  Then,
  \begin{enumerate}
    \item $f^!$ is colimit-preserving (in fact $\QC{S}$-linear)
    \item If $f$ is proper, then $f^! \cong f^{\times}$.
    \item If $f$ is étale, then $f^! \cong f^*$.
    \item If $g: X' \to X$ is also finite tor-amplitude, locally almost of finite-presentation, and separated, then,
      \[g^!f^! \cong (fg)^!\]
    \end{enumerate}
\end{theorem}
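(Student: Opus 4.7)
The plan is to handle the four claims in the stated order, the composition identity (4) being by far the most substantial. For part (1), both $f^*$ and $\tilde{\delta}^*$ are symmetric monoidal pullbacks, hence linear over the appropriate quasi-coherent structure, and $\tilde{\pi}_1^{\times}$ is $\QC{X}$-linear by Proposition \ref{qcoh2}. Composing, $f^! = \tilde{\delta}^* \tilde{\pi}_1^{\times} f^*$ is $\QC{S}$-linear, and in particular preserves all colimits.

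For part (2), I would apply proper base change around the Cartesian square
\[
\begin{tikzcd}
X \times_S X \arrow{r}{\pi_2} \arrow{d}{\pi_1} & X \arrow{d}{f} \\
X \arrow{r}{f} & S
\end{tikzcd}
\]
to obtain $\pi_1^{\times} f^* \cong \pi_2^* f^{\times}$; then $f^! \cong \delta^* \pi_2^* f^{\times} = f^{\times}$ since $\pi_2 \delta = \id_X$. For part (3), since $f$ is étale the diagonal is both an open and a closed immersion, so $X \times_S X \cong X \sqcup Z$ as schemes. This identifies $\Gamma_{\Delta}(\QC{X \times_S X})$ with $\QC{X}$, and under this identification $\tilde{\delta}^*$ and $\tilde{\pi}_1^{\times}$ both reduce to the identity (the latter because $\pi_1$ restricts to the identity on the diagonal summand). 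Hence $f^! \cong f^*$.

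Part (4) is the main obstacle. By part (1) both $g^! f^!$ and $(fg)^!$ are $\QC{S}$-linear, so it suffices to produce a natural transformation between them and verify it is an isomorphism on $\mathcal{O}_S$. To set up the natural transformation, I would combine the factorization $\delta_{X'/S} = \iota \circ \delta_{X'/X}$, where $\iota : X' \times_X X' \hookrightarrow X' \times_S X'$ is the closed immersion (existing because $X \to S$ is separated), with the Cartesian square
\[
\begin{tikzcd}
X' \times_X X' \arrow{r}{\iota} \arrow{d} & X' \times_S X' \arrow{d}{g \times g} \\
X \arrow{r}{\delta_{X/S}} & X \times_S X
\end{tikzcd}
\]
Expanding both sides via the definition of $(-)^!$ reduces the problem to an exchange isomorphism relating $\iota^* \tilde{\pi}_{1,X'/S}^{\times} g^*$ and $\tilde{\pi}_{1,X'/X}^{\times} g^* \tilde{\delta}_{X/S}^* \tilde{\pi}_{1,X/S}^{\times}$ as functors $\QC{X} \to \Gamma_{\Delta}(\QC{X' \times_X X'})$.

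The hardest step will be establishing this exchange isomorphism, since $\iota$ is a non-flat closed immersion and the Cartesian square above is not tor-independent. The paper's framework invites exploiting that all functors involved land in $\Gamma_{\Delta}$: restricted to sheaves supported on the diagonal of $X' \times_S X'$ the awkwardness of $\iota^*$ is controlled, because the diagonal of $X'$ over $S$ is already contained in $X' \times_X X'$. Moreover, Propositions \ref{quasicoherent} and \ref{qcoh2} allow reduction étale-locally on $X'$ (and on $X$), so the problem can be localized to an affine situation in which evaluating both sides on $\mathcal{O}_S$ becomes the identity $g^!(f^!(\mathcal{O}_S)) \cong (fg)^!(\mathcal{O}_S)$ in $\QC{X'}$; this is ultimately a statement about composition of relative dualizing complexes, which can be verified by direct computation in the étale-local model.
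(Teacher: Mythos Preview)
Your treatment of parts (1)--(3) is essentially correct and close to the paper's. For (2) note that what you are invoking is not classical proper base change but the linear/``pull--cross'' base change of Theorem~\ref{BC2}: when $f$ is proper and finite tor-amplitude, $f_*$ preserves compacts, so $f^{\times}$ is colimit-preserving and $\QC{S}$-linear (Theorem~\ref{Proj1}); tensoring with $\QC{X}$ then yields $\pi_1^{\times} f^* \cong \pi_2^* f^{\times}$, and pulling back along $\delta$ gives the result. The paper derives this as the special case $g=\id$ of the more general Theorem~\ref{crossshriek2}, but your direct argument is fine. For (3) the clopen decomposition you use is one concrete way to see what the paper deduces from Proposition~\ref{AGinput2}.

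Part (4) has a genuine gap. You correctly isolate the hard step as an exchange isomorphism comparing $\iota^*\tilde{\pi}_{1,X'/S}^{\times}g^*$ with $\tilde{\pi}_{1,X'/X}^{\times}g^*\tilde{\delta}_{X/S}^*\tilde{\pi}_{1,X/S}^{\times}$, but ``localize and compute directly'' is not a proof: even in the affine case this is precisely the statement $\omega_{B/k}\cong \omega_{B/A}\otimes_B g^*\omega_{A/k}$, which is not an elementary manipulation of the formulas $\Gamma_\Delta\Hom_k(-,-)\otimes_{(-\otimes-)}(-)$. Étale localization (Propositions~\ref{quasicoherent} and~\ref{qcoh2}) does not reduce the difficulty, since the obstruction is the non-tor-independence of your square, which persists affine-locally.

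The paper's key idea, absent from your sketch, is to route through the \emph{mixed} product $X\times_S X'$ rather than directly comparing $X'\times_S X'$ with $X'\times_X X'$. One factors $\tilde{\pi}_2^{(X'\times_S X'),\times}$ through $\widetilde{(g\times\id)}^{\times}$ and $\tilde{\pi}_2^{(X\times_S X'),\times}$; the crucial move is then to replace the latter by $\tilde{\pi}_2^{(X\times_S X'),!}$ using Theorem~\ref{crossshriek4} (that $\Gamma_\Lambda p^{\times}\cong\Gamma_\Lambda p^{!}$ whenever $\Lambda$ is proper over the base---here $\Lambda$ is the graph of $g$). Once the $\times$ is converted to a $!$, the already-established base change for $(-)^!$ (Theorem~\ref{shriekbase}) applies, and a second (now tor-independent) Cartesian square handles the remaining $\widetilde{(g\times\id)}^{\times}$. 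This $\times\leadsto !$ conversion on diagonally supported sheaves is the substantive input you are missing.
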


We also demonstrate the versatility of our techniques by showing some ``Hochshild-style'' formulas from \cite{N1},
see Theorem \ref{hochschildstyle} in the text.
\begin{theorem} 
  For $\mathcal{F} \in \QC{S}$ and $\mathcal{G} \in \QC{X}$, we have
  \[\delta^{\times}\pi_1^*\iHom(p_X^*\mathcal{F},\mathcal{G}) \cong \iHom(p_X^!\mathcal{F},\mathcal{G})\]
  where $\iHom$ denotes internal Hom of quasicoherent sheaves.
\end{theorem}

\textbf{Relation to $D$-modules}: 
For simplicity, let us now assume $S=\Spec{k}$ for a field $k$ and $X=\Spec{A}$,
a smooth affine variety over a field $k$. The ring of Grothendieck differential operators on $X$ relative to $k$, $D_{X/k}$,
is defined to be the increasing union
\[D_{X/k} := \bigcup_{n \ge 0} {D^{(n)}} \subseteq \Hom_k(A,A)\]
where $D^{(n)} \subseteq \Hom_k(A,A)$ defined inductively by
\[D^{(-1)}=0\]
and
\[D^{(n)}=\{f \in \Hom_k(A,A) | \forall a \in A,  [f,a] \in D^{(n-1)}\}\]
($a \in A$ is thought of as an element $\Hom_k(A,A)$ via multiplication by $a$)

We can in fact interpret the above definition as follows. $\Hom_k(A,A)$ is naturally a $A$-bimodule, with
the left action given by post-composition of a $k$-linear function from $A$ to $A$ with multiplication by an element
of $A$, and the right action given by pre-composition. Then,
\[D^{(n)} \cong V(\mathcal{I}^n)\]
is the subset of $\Hom_k(A,A)$ which is annihilated by the ideal $I^n$, where
\[I:=(\{a \otimes 1 - 1 \otimes a | a \in A\})\] is the ideal of $A \otimes A$
of regular functions on $X \times X$ which vanishes on the diagonal.
Therefore, we see that $D_{A/k}$ is just the subset of $\Hom_k(A,A)$
on which $I$ acts nilpotently, or equivalently,
\[D_{A/k} \cong H^0(\Gamma_{\Delta}(\Hom_k(A,A))\]
the $0$-th local cohomology of the module $\Hom_k(A,A)$ along the diagonal of $X$.
In fact, Proposition 2.2.1 of \cite{SVdB} shows that the higher cohomologies vanish for smooth $A$.

Given this, it is natural to define in general the Grothendieck
ring of differential operators on an affine (not necessarily smooth) scheme $X=\Spec{A}$ over $\Spec{k}$ as
\[D_{A/k}:= \Gamma_{\Delta}(\Hom_k(A,A))\]
This relates to the functor $\tilde{\pi}_1^{\times}$ by
\begin{equation}
  \begin{split}
    D_{A/k} &:= \Gamma_{\Delta}(\Hom_k(A,A)) \\
    &\cong \Gamma_{\Delta}(\Hom_A(A \otimes_k A,A)) \\
    &\cong \Gamma_{\Delta}(\pi_1^{\times}(A)) \\
    &\cong \tilde{\pi}_1^{\times}(A)
  \end{split}
\end{equation}
which is basically the definition we give in the text in Definition \ref{bigdef}.
Additionally, the above relates to a classical formula for the ring of differential operators by
\begin{equation}
  \begin{split}
    D_{A/k} 
    &\cong \Gamma_{\Delta}(\Hom_A(A \otimes_k A,A)) \\
    &\cong \colim_n(\Hom_A(A \otimes_k A/I^n,A) \\
  \end{split}
\end{equation}
where here our Hom is derived.

For the reasons above, the question of locality of the exceptional pullback functor is related to the locality
of the sheaf of Grothendieck differential operators, which explains why we begin with describing properties of the sheaf
of differential operators in Section \ref{G}.

\textbf{Outline}: Section 1 introduces the sheaf of Grothendieck
differential operators, and various properties of the category $\Gamma_{\Delta}(\QC{X \times X})$.
Section 2 defines the exceptional pullback/upper shriek functor and proves its properties. Section 3
discusses two fundamental dualities which show up in our picture of Grothendieck duality (and will be
important in upcoming work in $D$-modules \cite{upcoming}). Section 4 defines a quasicoherent lower shriek functor (different
to étale lower shriek!) and reproves many ``Hochschild-style'' formulas from \cite{N1}.
Section 5 discusses our constructions in the smooth case, where we can compute the objects explicitly.

\textbf{Conventions}: All categories, unless stated otherwise will be $(\infty, 1)$-categories. A $2$-category
will refer to a $(\infty, 2)$-category. All functors, such as $\Hom$, $\otimes$, $\colim$, and $\lim$ will be
fully derived/done at the $\infty$-categorical level unless stated otherwise. A stable category will refer to
a stable $\infty$-category. All modules/quasicoherent sheaves will also be assumed to be fully derived.
We will aim to follow the terminology of Lurie in \cite{HTT}, \cite{HA}, and \cite{SAG}.

\textbf{Acknowledgements}: I heartily thank my advisor, Bhargav Bhatt, for countless discussions, suggestions, 
and insights whose effects permeate this paper. Thanks also to Shubhodip Mondal and Sridhar Venkatesh for helpful discussions.
I would also like to thank German Stefanich for patiently explaining
to me many ideas of the papers of Dario Beraldo, \cite{Ber1} and \cite{Ber2}.

\section{Grothendieck Differential Operators} \label{G}
In this section, we define the sheaf of Grothendieck differential operators $D_X$ and show that it satisfies
étale descent. The main ingredient is the ``quasicoherent'' sheaf of categories $\Gamma_{\Delta}(\QC{X \times X})$,
which we discuss at length in this section. However, discussion of the ring structure on $D_X$ is deferred to an upcoming paper \cite{upcoming},
where we will also discuss developing the theory of $D$-modules from scratch using the (derived) ring $D_X$. These
results about the sheaf $D_X$ will allow us to obtain locality results of the upper shriek functor in Section \ref{upshriek}.

We adopt the terminology of \cite{SAG}.
Fix a homologically bounded spectral Noetherian base scheme $S$. Consider
a spectral Noetherian schemes $X$ with a structure map $p_X : X \to S$
which is separated, locally almost of finite presentation, and finite tor-amplitude. 
All our spectral schemes will therefore be perfect stacks, in the terminology of \cite{SAG} (note that
this is less restricted than the notion introduced in \cite{BZFN} with the same name).

Let $X \times_S X$ be the pullback
of $p$ along itself. We define $\pi_1$ and $\pi_2$ to be the two projection
maps of this pullback. Here is a diagram,
\begin{equation}\label{eq1}
  \begin{tikzcd}
    X \times_S X  \arrow{r}{\pi_2} \arrow{dd}{\pi_1} & X \arrow{dd}{p} \\ \\
    X \arrow{r}{p} & S\\
  \end{tikzcd}
\end{equation}

Let $\Delta$ denote the diagonal inside $X \times_S X$ (which is abstractly isomorphic to $X$).
Then we take the following as the definition
of the Grothendieck sheaf of differential operators.

\begin{definition} \label{bigdef}
  The Grothendieck sheaf of differential operator of $X$ over $S$ is defined to be
  \[D_{X/S} := \Gamma_{\Delta} \pi_1^{\times}\mathcal{O}_X \in \Gamma_{\Delta}(\QC{X \times_S X})\]
  where $\pi_1^{\times}$ is the right adjoint of the pushforward functor $\pi_{1,*}$ and
  $\Gamma_{\Delta}$ is the right adjoint of the inclusion functor
  \[i_{\Delta}:\Gamma_{\Delta}(\QC{X \times_S X}) \to \QC{X \times_S X}\]
  of the subcategory of diagonally supported quasicoherent sheaves.
  Often we will suppress $S$ from the notation and write simply $D_X$.
\end{definition}

\begin{remark} \label{affinediff}
  In the affine case, where $X \cong \Spec{A}$ and $S \cong \Spec{k}$, the above simply says that $D_{X/S} \cong \Gamma_{\Delta}\Hom_k(A,A)$.
  where $D_{X/S}$ is thought of as a $A$-bimodule supported on the diagonal.
  This is because the functor $\pi_{1,*} : \Mod{A \otimes_k A} \to $ is given by the formula
  \[\pi_{1,*}(M) \cong (A \otimes_k A) \otimes_{A \otimes_k A} (M)\]
  where $A$ acts on the left $A$ in the tensor and $A \otimes A$ acts on $A \otimes A$ by
  multiplication inside $A \otimes A$. Therefore its right adjoint is given by the formula
  \[\pi_1^{\times}(M) \cong \Hom_A(A \otimes_k A, M)\]
  By adjunction this is the same as $\Hom_k(A,M)$ where the $A \otimes_k A$ module structure
  has the left $A$ acting on $M$ (the codomain) and the right $A$ acting on $A$ (the domain). This
  means the left $A$ acts by postcomposition of the $k$-linear function with multiplication
  by an element of $A$ and the right $A$ acts by precomposition. Visually, we have
  \[((a_1 \otimes a_2)f)(x) = a_1 f ( a_2 x)\]
  for $f \in \Hom_k(A,M)$.
\end{remark}

We write
\[\tilde{\pi}_{1,*} : \Gamma_{\Delta}(\QC{X \times_S X}) \to \QC{X}\]
for the composition $\pi_{1,*}i_{\Delta}$ and
\[\tilde{\pi}_1^{\times} : \QC{X} \to \Gamma_{\Delta}(\QC{X \times_S X})\]
for the right adjoint of $\tilde{\pi}_{1,*}$. Then $D_{X/S} \cong \tilde{\pi}_1^{\times}\mathcal{O}_X$.
Importantly, $\tilde{\pi}_1^{\times}$ is a colimit-preserving functor. This follows from the following theorem
because all the categories in sight are compactly generated (see Lemma \ref{compactgen}).

\begin{theorem} \label{AGinput1}
  $\tilde{\pi}_{1,*}$ preserves compact objects.
\end{theorem}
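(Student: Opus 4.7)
The plan is to exhibit a compact-generating family of $\Gamma_{\Delta}(\QC{X \times_S X})$ that $\tilde{\pi}_{1,*}$ visibly sends to compact objects of $\QC{X}$; since $\tilde{\pi}_{1,*}$ is exact and compact objects are closed under finite colimits and retracts, this will suffice. Writing $Y := X \times_S X$, my candidate generators are $\mathcal{G} := \{\delta_* \mathcal{F} : \mathcal{F} \text{ a perfect complex on } X\}$. Each such object lies in $\Gamma_{\Delta}(\QC{Y})$, and since $\pi_1 \circ \delta = \id_X$ we have
\[\tilde{\pi}_{1,*}(\delta_* \mathcal{F}) = \pi_{1,*}\delta_* \mathcal{F} = (\pi_1 \circ \delta)_* \mathcal{F} = \mathcal{F},\]
a perfect (hence compact) object of $\QC{X}$. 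The theorem thus reduces to two claims: (a) each $\delta_* \mathcal{F}$ is compact in $\Gamma_\Delta(\QC{Y})$, and (b) $\mathcal{G}$ generates $\Gamma_\Delta(\QC{Y})$ under colimits.

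For (a), by adjunction compactness of $\delta_*\mathcal{F}$ (for $\mathcal{F}$ perfect on $X$) in $\Gamma_\Delta(\QC{Y})$ is equivalent to the restriction $\delta^{\times}|_{\Gamma_\Delta(\QC{Y})}$ preserving filtered colimits (noting that $i_\Delta$, having a right adjoint $\Gamma_\Delta$, preserves colimits, so colimits in $\Gamma_\Delta(\QC{Y})$ coincide with those computed in $\QC{Y}$). The key input is the canonical presentation $M \simeq \colim_n \iHom_Y(\mathcal{O}_Y/\mathcal{I}^n, M)$ for $M \in \Gamma_{\Delta}(\QC{Y})$, together with the almost-perfectness of each $\mathcal{O}_Y/\mathcal{I}^n$ on the Noetherian stack $Y$ (a consequence of the locally almost of finite presentation hypothesis on $X \to S$); these combine to let one commute $\delta^{\times}$ past filtered colimits of diagonally supported sheaves. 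For (b), I would argue by orthogonality: if $M \in \Gamma_\Delta(\QC{Y})$ is right-orthogonal to $\mathcal{G}$, then by adjunction $\delta^{\times} M$ is right-orthogonal to all perfect complexes on $X$, forcing $\delta^{\times} M \simeq 0$ since $X$ is a perfect stack. An induction along the cofiber sequences $\mathcal{I}^n/\mathcal{I}^{n+1} \to \mathcal{O}_Y/\mathcal{I}^{n+1} \to \mathcal{O}_Y/\mathcal{I}^n$, exploiting that each $\mathcal{I}^n/\mathcal{I}^{n+1} \simeq \delta_* \mathcal{F}_n$ for some $\mathcal{F}_n \in \QC{X}$ (so orthogonality transfers), then yields $\iHom_Y(\mathcal{O}_Y/\mathcal{I}^n, M) \simeq 0$ for all $n$, whence $M \simeq \colim_n \iHom_Y(\mathcal{O}_Y/\mathcal{I}^n, M) \simeq 0$.

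The main obstacle is part (a). The subtle point is that $\delta_*\mathcal{F}$ is typically \emph{not} compact in the ambient $\QC{Y}$---the closed immersion $\delta$ need not be of finite tor-dimension under our general hypotheses---so compactness emerges only upon restricting to $\Gamma_\Delta(\QC{Y})$. Making this precise requires exploiting both the Noetherian structure on $Y$ and the almost-perfect nature of $\delta_*\mathcal{O}_X$ to commute $\delta^{\times}$ with filtered colimits of torsion sheaves, leveraging the fact that the restricted class of filtered colimits (those of $\mathcal{I}$-torsion objects, which admit the levelwise presentation above) is much better behaved than arbitrary filtered colimits in $\QC{Y}$.
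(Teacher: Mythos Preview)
Your approach has a genuine gap at claim (a). You hope that $\delta_*\mathcal{F}$ becomes compact once you restrict to $\Gamma_\Delta(\QC{Y})$, even though it is typically not compact in $\QC{Y}$; but this cannot happen. Since $i_\Delta$ is fully faithful with colimit-preserving right adjoint $\Gamma_\Delta$, an object $M \in \Gamma_\Delta(\QC{Y})$ is compact there \emph{if and only if} $i_\Delta M$ is compact in $\QC{Y}$: indeed $\Hom_Y(i_\Delta M, -) \cong \Hom_{\Gamma_\Delta}(M, \Gamma_\Delta(-))$, and $\Gamma_\Delta$ preserves filtered colimits. Equivalently, your reformulation---that $\tilde\delta^\times := \delta^\times i_\Delta$ preserve filtered colimits---is the same as asking that $\delta^\times$ itself preserve filtered colimits on all of $\QC{Y}$, since $\delta^\times \cong \tilde\delta^\times \Gamma_\Delta$. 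So compactness of $\delta_*\mathcal{O}_X$ in $\Gamma_\Delta(\QC{Y})$ would force $\delta_*\mathcal{O}_X$ to be perfect on $Y$, i.e.\ $X$ homologically smooth over $S$, which is not among the hypotheses. For a concrete failure take $S = \Spec{k}$ with $k$ a field and $X = \Spec{A}$ with $A = k[x]/(x^2)$: here $A \otimes_k A$ is local Artinian, so $\Gamma_\Delta(\QC{Y}) = \QC{Y}$, yet $\delta_*\mathcal{O}_X = A$ is not perfect over $A \otimes_k A$ (its Hochschild homology is unbounded). Your family $\mathcal{G}$ does generate under colimits---this is essentially Lemma~\ref{conserve}---but it is not a family of compact objects.

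The paper sidesteps this by never naming generators. It uses directly that the compact objects of $\Gamma_\Delta(\QC{Y})$ are exactly the perfect complexes on $Y$ set-theoretically supported on $\Delta$ (by the biconditional above), and then argues: such a perfect complex has proper support over $X$ via $\pi_1$, so $\pi_{1,*}$ takes it to an almost perfect complex (SAG 5.6.5.2); the finite tor-amplitude hypothesis on $p_X$ (hence on $\pi_1$) then upgrades almost perfect to perfect. If you want explicit compact generators of $\Gamma_\Delta(\QC{Y})$, the right objects are Koszul-type complexes on local equations for $\Delta$, not pushforwards along $\delta$.
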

\begin{proof}
  Because $i_{\Delta}$ preserves compact objects, any compact object
  \[x \in \Gamma_{\Delta}(\QC{X \times_S X})\]
  can be thought of as a compact object $x \in \QC{X \times_S X}$ supported at the diagonal.
  By SAG 5.6.5.2, $\pi_{1,*}(x)$ is almost perfect. Because $p_X: X \to S$ is finite tor-amplitude, $\pi_{1,*}(x)$
  is also finite tor-amplitude. Therefore, $\pi_{1,*}(x)$ is perfect by HA 7.2.4.23, hence compact.
\end{proof}
\begin{corollary} \label{AGcoro1}
  $\tilde{\pi}_1^{\times}$ is a colimit-preserving $\QC{X}$-linear functor, where $\QC{X}$ acts on $\Gamma_{\Delta}(\QC{X \times X})$
  via $\Gamma_{\Delta}\pi_1^*$.
\end{corollary}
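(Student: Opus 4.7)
The plan is to derive both assertions from Theorem \ref{AGinput1} via the standard machinery of adjoints in $\mathrm{Mod}_{\QC{X}}(\mathrm{Pr}^L)$, the category of presentable $\QC{X}$-module $\infty$-categories.

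The first step is to verify that $\tilde{\pi}_{1,*}$ itself is $\QC{X}$-linear; colimit-preservation is automatic since $\tilde{\pi}_{1,*}$ is a left adjoint. Decomposing $\tilde{\pi}_{1,*} = \pi_{1,*} \circ i_{\Delta}$, the $\QC{X}$-linearity of $\pi_{1,*}$ with respect to the $\pi_1^{*}$-action is exactly the projection formula, while $i_{\Delta}$ is $\QC{X \times_S X}$-linear because $\Gamma_{\Delta}$ is a smashing colocalization of $\QC{X \times_S X}$. Concretely, the comparison map $i_{\Delta}\Gamma_{\Delta}\pi_1^{*}(\mathcal{F}) \otimes i_{\Delta}(\mathcal{G}) \to \pi_1^{*}(\mathcal{F}) \otimes i_{\Delta}(\mathcal{G})$ is an isomorphism because its fiber is the tensor of a $\Delta$-local sheaf with a $\Delta$-torsion sheaf, and hence vanishes.

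The second step is to invoke the formal principle that a morphism $F$ in $\mathrm{Mod}_{\QC{X}}(\mathrm{Pr}^L)$ admits a right adjoint in $\mathrm{Mod}_{\QC{X}}(\mathrm{Pr}^L)$ precisely when $F$ preserves compact objects, and in that case this right adjoint agrees with the ordinary right adjoint in $\mathrm{Pr}$. Applied to $\tilde{\pi}_{1,*}$, the compact-preservation hypothesis is precisely Theorem \ref{AGinput1}, whence $\tilde{\pi}_1^{\times}$ is both colimit-preserving and $\QC{X}$-linear, as asserted. The conceptually delicate part of this principle is the passage from a lax to a strict $\QC{X}$-linear structure on $\tilde{\pi}_1^{\times}$; a hands-on version runs as follows. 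The canonical lax structure map $\mathcal{F} \otimes \tilde{\pi}_1^{\times}(\mathcal{G}) \to \tilde{\pi}_1^{\times}(\mathcal{F} \otimes \mathcal{G})$ is an isomorphism when $\mathcal{F}$ is perfect, since $\mathcal{F} \otimes (-)$ then admits $\mathcal{F}^{\vee} \otimes (-)$ as its two-sided adjoint; it extends to arbitrary $\mathcal{F}$ by writing $\mathcal{F}$ as a colimit of perfect complexes and invoking the colimit-preservation of $\tilde{\pi}_1^{\times}$, which itself follows from Theorem \ref{AGinput1} via the standard compactly-generated adjoint functor theorem argument (using that $\tilde{\pi}_1^{\times}$, as a right adjoint between stable $\infty$-categories, automatically preserves finite colimits).
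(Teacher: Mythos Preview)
Your proof is correct and follows essentially the same approach as the paper, which simply cites Theorem~\ref{AGinput1} together with Theorem~\ref{Proj1} (the enriched adjoint functor theorem in the appendix). You have spelled out in more detail both the $\QC{X}$-linearity of $\tilde{\pi}_{1,*}$---which the paper leaves implicit as a prerequisite for applying Theorem~\ref{Proj1}---and the hands-on mechanism behind the lax-to-strict passage that Theorem~\ref{Proj1} encapsulates.
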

\begin{proof}
  Follows from the theorem above and Theorem \ref{Proj1}.
\end{proof}

To prove étale descent results about $D_{X/S}$ let us first show some étale descent results about
the category $\Gamma_{\Delta}(\QC{X \times_S X})$. Namely we want to show that $\Gamma_{\Delta}(\QC{X \times_S X})$
is a ``quasicoherent'' sheaf of categories over $\QC{X}$ with action via $\Gamma_{\Delta}\pi_1^*$.

\begin{proposition} \label{quasicoherent}
  For an étale map $u : U \to X$, we have
  \[\QC{U} \otimes_{\QC{X}} \Gamma_{\Delta}(\QC{X \times_S X}) \cong \Gamma_{\Delta}(\QC{U \times_S U})\]
  where $\QC{X}$ acts on $\QC{U}$ via $j^*$ and $\QC{X}$ acts on $\Gamma_{\Delta}(\QC{X \times_S X})$
  via $\Gamma_{\Delta}\pi_1^*$.
\end{proposition}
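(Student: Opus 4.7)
The plan is to construct a natural $\QC{U}$-linear functor $\Phi$ from the left-hand side to the right-hand side, and to show it is an equivalence via a recollement argument combined with an étale pushforward identification.

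First, I would construct $\Phi$. Since $u$ is étale, so is $u \times u \colon U \times_S U \to X \times_S X$, and postcomposing the pullback with the colocalization $\Gamma_{\Delta}$ yields a functor
\[\Phi_0 := \Gamma_{\Delta} \circ (u \times u)^* \colon \Gamma_{\Delta}(\QC{X \times_S X}) \to \Gamma_{\Delta}(\QC{U \times_S U}).\]
The base-change identity $(u \times u)^* \pi_{1,X}^* \cong \pi_{1,U}^* u^*$ combined with the fact that tensoring with a $\pi_1$-pullback preserves the diagonally supported subcategory yields $\QC{X}$-linearity of $\Phi_0$. The universal property of the relative tensor product then extends this to the desired $\QC{U}$-linear map
\[\Phi \colon \QC{U} \otimes_{\QC{X}} \Gamma_{\Delta}(\QC{X \times_S X}) \to \Gamma_{\Delta}(\QC{U \times_S U}).\]

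Next, to prove $\Phi$ is an equivalence, I would apply the exact functor $\QC{U} \otimes_{\QC{X}} (-)$ to the recollement fiber sequence of presentable $\QC{X}$-module categories
\[\Gamma_{\Delta}(\QC{X \times_S X}) \to \QC{X \times_S X} \to \QC{(X \times_S X) \setminus \Delta_X}.\]
Standard étale descent for $\QC$ identifies the middle with $\QC{U \times_S X}$ and the right with $\QC{(U \times_S X) \setminus \Gamma_u}$, where $\Gamma_u \subset U \times_S X$ denotes the graph of $u$ (the pullback of $\Delta_X$ along $u \times \id_X$). This produces
\[\QC{U} \otimes_{\QC{X}} \Gamma_{\Delta}(\QC{X \times_S X}) \cong \Gamma_{\Gamma_u}(\QC{U \times_S X}).\]
It then suffices to identify $\Gamma_{\Gamma_u}(\QC{U \times_S X}) \cong \Gamma_{\Delta}(\QC{U \times_S U})$ compatibly with $\Phi$. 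The étale map $\id_U \times u \colon U \times_S U \to U \times_S X$ restricts to an isomorphism $\Delta_U \cong \Gamma_u$, while the full preimage of $\Gamma_u$ is $U \times_X U = \Delta_U \sqcup W$ (a clopen decomposition, using étaleness of $u$). I would then show that the pushforward $(\id_U \times u)_*$ restricted to $\Gamma_{\Delta}(\QC{U \times_S U})$ is an equivalence onto $\Gamma_{\Gamma_u}(\QC{U \times_S X})$: fully-faithfulness follows from vanishing of $\Hom$'s between $\Delta_U$- and $W$-supported sheaves, which absorbs the spurious $W$-component of the counit $(\id_U \times u)^* (\id_U \times u)_*$, and essential surjectivity follows because the compact generators of $\Gamma_{\Gamma_u}$ (infinitesimal thickenings of $\Gamma_u$) lie in the image.

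The main obstacle is this final identification, where careful tracking of the clopen summand $W$ is essential and where étaleness of $u$ — rather than mere flatness — is crucial. A final diagram chase through the various base-change identifications confirms that the composite equivalence agrees with the $\Phi$ constructed in the first step.
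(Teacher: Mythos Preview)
Your proposal is correct and follows essentially the same two-step strategy as the paper: tensor the split-exact (recollement) sequence defining $\Gamma_{\Delta}$ with $\QC{U}$ to obtain $\Gamma_{\Gamma_u}(\QC{U \times_S X})$, then identify this with $\Gamma_{\Delta}(\QC{U \times_S U})$ via the étale map $\id_U \times u$. The only difference is packaging: the paper isolates the second step as a standalone lemma (Proposition~\ref{AGinput2}, proved via conservativity of $\tilde{z}^*$ rather than your clopen-decomposition argument), but both arguments ultimately rest on the fact that the diagonal of a separated étale map is an open immersion.
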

\begin{proof}
  The left hand side is canonically
  \[\QC{U \times_S X} \otimes_{\QC{X \times_S X}} \Gamma_{\Delta}(\QC{X \times_S X}) \cong \Gamma_{\Delta}(\QC{U \times_S X})\]
  because tensor products preserve split-exact sequences of presentable stable categories (see Proposition \ref{tensorref}). Now the result follows from the
  Proposition \ref{AGinput2} applied to the diagonal closed immersion $U \to U \times X$ which factors through the map $U \times U \to U \times U$.
\end{proof}

\begin{corollary} \label{talesheaf}
  \[U \mapsto \Gamma_{\Delta}(\QC{U \times_S U})\]
  is an étale sheaf on $X$.
\end{corollary}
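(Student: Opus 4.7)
The plan is to bootstrap étale descent for $\Gamma_{\Delta}(\QC{- \times_S -})$ from étale (indeed, faithfully flat) descent for $\QC{-}$ itself, using Proposition \ref{quasicoherent} to reduce one to the other.

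Concretely, let $\{u_i : U_i \to X\}$ be an étale cover, and after replacing it with a Zariski refinement assume each $U_i$ is affine; let $\tilde U_\bullet$ denote the Čech nerve over $X$. I must show
\[
  \Gamma_{\Delta}(\QC{X \times_S X}) \xrightarrow{\ \sim\ } \lim_{[n] \in \Delta}\,\Gamma_{\Delta}(\QC{\tilde U_n \times_S \tilde U_n}).
\]
Each $\tilde U_n \to X$ is étale, so applying Proposition \ref{quasicoherent} termwise, and checking that the identification there is natural in étale morphisms (so gives an equivalence of cosimplicial presentable stable categories), the right-hand side becomes
\[
  \lim_{[n] \in \Delta}\bigl(\QC{\tilde U_n} \otimes_{\QC{X}} \Gamma_{\Delta}(\QC{X \times_S X})\bigr).
\]
Thus it suffices to prove that the $\QC{X}$-module category $\Gamma_{\Delta}(\QC{X \times_S X})$ (with $\QC{X}$ acting via $\Gamma_{\Delta}\pi_1^*$, as in Corollary \ref{AGcoro1}) satisfies descent along the cosimplicial diagram $\QC{\tilde U_\bullet}$.

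For this, I would invoke the fact that any presentable module category over $\QC{X}$ automatically satisfies faithfully flat (hence étale) descent; this is the content of the results on flat descent of module categories in Lurie's SAG (a form of the $\infty$-categorical Barr–Beck descent for the monad arising from a faithfully flat cover, applied to a module in $\Pr^L$ over $\QC{X}$). Concretely, étale descent $\QC{X} \simeq \lim \QC{\tilde U_\bullet}$ is not merely an equivalence of symmetric monoidal categories but is \emph{absolute} in the sense that it is preserved after tensoring with any presentable $\QC{X}$-module, and $\Gamma_{\Delta}(\QC{X \times_S X})$ is such a module.

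The main obstacle is the bookkeeping in the first step: verifying that the base-change equivalence of Proposition \ref{quasicoherent} assembles into a morphism of cosimplicial objects compatible with all face and degeneracy maps of the Čech nerve, rather than just being an equivalence at each level. Once this naturality is in hand, the descent statement itself is essentially formal from the descent properties of $\QC{-}$, since tensor products of presentable module categories over $\QC{X}$ commute with the limits that compute étale descent.
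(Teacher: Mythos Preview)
Your proposal is correct and takes essentially the same approach as the paper: use Proposition \ref{quasicoherent} to identify $U \mapsto \Gamma_{\Delta}(\QC{U \times_S U})$ with $U \mapsto \QC{U} \otimes_{\QC{X}} \Gamma_{\Delta}(\QC{X \times_S X})$, and then invoke the general fact (SAG, Remark 10.1.2.10) that every $\QC{X}$-module category satisfies étale descent. The paper's proof is a one-line citation of this fact, whereas you have unpacked the Čech-nerve argument and flagged the naturality bookkeeping, but the underlying idea is the same.
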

\begin{proof}
  This follows from the above proposition as all quasicoherent sheaves of categories satisfy étale descent (see Remark 10.1.2.10 of
  \cite{SAG}), though in this case it is easy to check directly that $\Gamma_{\Delta}(U \times_S X)$ is an étale sheaf directly as well.
\end{proof}

\begin{remark}
  Proposition \ref{quasicoherent} and Corollary \ref{talesheaf} admit obvious generalizations to products of more than two terms.
\end{remark}

Next, we show that $\tilde{\pi}_1^{\times}$ is a ``quasicoherent'' map of (quasicoherent) sheaves of categories. 
\begin{proposition} \label{qcoh2}
  For an étale map $u : U \to X$, 
  \[\tilde{\pi}_{1,U}^{\times} :\QC{U} \to \Gamma_{\Delta}(\QC{U \times_S U})\]
  is $\QC{U}$ linear and
  agrees with $\tilde{\pi}_{1,X}^{\times}$ for $X$ base changed to $U$, i.e. tensored with $\QC{U}$ over $\QC{X}$.
\end{proposition}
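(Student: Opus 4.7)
The plan consists of two parts. The $\QC{U}$-linearity of $\tilde{\pi}_{1,U}^{\times}$ is essentially immediate from Corollary \ref{AGcoro1}: the hypotheses of separatedness, finite tor-amplitude, and local almost finite presentation over $S$ are preserved under étale base change along $u:U\to X$, so Corollary \ref{AGcoro1} applied directly to $U\to S$ (together with Theorem \ref{AGinput1} for $U$) yields $\QC{U}$-linearity of $\tilde{\pi}_{1,U}^{\times}$.

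The substantive claim is the identification with the base change of $\tilde{\pi}_{1,X}^{\times}$. The strategy is to pass to left adjoints. Since the relative tensor product $-\otimes_{\QC{X}} \QC{U}$ is $(\infty,2)$-functorial on $\QC{X}$-linear presentable stable categories, it preserves adjoint pairs: the left adjoint of the base-changed functor $\id_{\QC{U}}\otimes_{\QC{X}}\tilde{\pi}_{1,X}^{\times}$ is $\id_{\QC{U}}\otimes_{\QC{X}}\tilde{\pi}_{1,X,*}$. Both $\tilde{\pi}_{1,U}^{\times}$ and the base-changed functor thus arise as right adjoints, and by uniqueness of right adjoints it suffices to establish a canonical equivalence between $\id_{\QC{U}}\otimes_{\QC{X}}\tilde{\pi}_{1,X,*}$ and $\tilde{\pi}_{1,U,*}$ under the identification of Proposition \ref{quasicoherent}.

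For this last comparison I would use flat base change for pushforward along the pullback square
\[
\begin{tikzcd}
U\times_S X \arrow{r} \arrow{d}{\pi_1'} & X\times_S X \arrow{d}{\pi_{1,X}} \\
U \arrow{r}{u} & X
\end{tikzcd}
\]
together with the two-step identification
\[\QC{U}\otimes_{\QC{X}}\Gamma_{\Delta}(\QC{X\times_S X}) \cong \Gamma_{\Delta}(\QC{U\times_S X}) \cong \Gamma_{\Delta}(\QC{U\times_S U})\]
used in the proof of Proposition \ref{quasicoherent}. The first step (via Proposition \ref{tensorref} and flat base change for the square above) transports the base change of $\pi_{1,X,*}$ to $\pi_{1,*}'$; the second step (via Proposition \ref{AGinput2}) should transport the restriction of $\pi_{1,*}'$ to diagonally-supported sheaves onto $\pi_{1,U,*}$, since any sheaf on $U\times_S X$ supported at the diagonal factors through the embedding $U\times_S U\hookrightarrow U\times_S X$. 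The main obstacle is precisely this last compatibility: verifying that the identification of diagonally-supported subcategories from Proposition \ref{AGinput2} intertwines the two pushforwards, which amounts to unpacking how Proposition \ref{AGinput2} realizes the equivalence at the level of the underlying sheaves and checking that pushforward to $U$ is unchanged by passing through the intermediate ambient space $U\times_S X$.
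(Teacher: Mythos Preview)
Your proposal is correct and follows essentially the same route as the paper: invoke Corollary~\ref{AGcoro1} for $\QC{U}$-linearity, then pass to left adjoints using that $-\otimes_{\QC{X}}\QC{U}$ preserves adjunctions, reducing to the identification of $\tilde{\pi}_{1,*}$ after base change. The paper simply asserts this last identification without further comment, whereas you unpack it via the two-step equivalence from Proposition~\ref{quasicoherent} and flag the compatibility check with Proposition~\ref{AGinput2}; this additional detail is sound and fills in what the paper leaves implicit.
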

\begin{proof}
  The map above is $\QC{U}$ linear by Corollary \ref{AGcoro1}.
  The second claim above follows because tensoring with $\QC{U}$ over $\QC{X}$ preserves adjoints of colimit-preserving functors and
  $\tilde{\pi}_{1,*}$ for $X$ tensored to $U$ agrees with $\tilde{\pi}_{1,*}$ for $U$.
\end{proof}

For $\mathcal{F} \in \Gamma_{\Delta}(\QC{X \times_S X})$, and an étale map $u : U \to X$,
we denote by $\mathcal{F}|_U$ the (quasicoherent) pullback of $\mathcal{F}$ in
\[\Gamma_{\Delta}(\QC{U \times_S U})\]

\begin{proposition} \label{prop14}
  For $\mathcal{F} \in \QC{X}$,
  \[\tilde{\pi}_{1,X}^{\times}(\mathcal{F})|_{U} \cong \tilde{\pi}_{1,U}^{\times}(\mathcal{F}|_U)\]
\end{proposition}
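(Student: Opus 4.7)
The plan is to observe that Proposition \ref{prop14} follows essentially formally from Propositions \ref{quasicoherent} and \ref{qcoh2}. Taken together, those two propositions already assert that $\tilde{\pi}_1^{\times}$ is a morphism of $\QC{X}$-module categories over the étale sheaf of such categories on $X$, and Proposition \ref{prop14} simply reads off the resulting compatibility at the level of individual objects $\mathcal{F} \in \QC{X}$.

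Concretely, I would first unpack what ``restriction to $U$'' means on each side. On the domain, $\mathcal{F}|_U = u^*\mathcal{F}$ is canonically identified with $\QC{U} \otimes_{\QC{X}} \mathcal{F}$, since $\QC{-}$ is a quasi-coherent sheaf of categories on $X$ in the usual sense. On the codomain, Proposition \ref{quasicoherent} identifies the restriction functor $\Gamma_{\Delta}(\QC{X \times_S X}) \to \Gamma_{\Delta}(\QC{U \times_S U})$ with the base change $\QC{U} \otimes_{\QC{X}} (-)$. With these identifications in hand, Proposition \ref{qcoh2} is exactly the statement that $\tilde{\pi}_{1,U}^{\times} \cong \id_{\QC{U}} \otimes_{\QC{X}} \tilde{\pi}_{1,X}^{\times}$ as $\QC{U}$-linear functors. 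Evaluating this equivalence of functors on the object $\mathcal{F}|_U = \QC{U} \otimes_{\QC{X}} \mathcal{F}$ and using the $\QC{X}$-linearity of $\tilde{\pi}_{1,X}^{\times}$ from Corollary \ref{AGcoro1}, we get
\[\tilde{\pi}_{1,U}^{\times}(\mathcal{F}|_U) \cong \QC{U} \otimes_{\QC{X}} \tilde{\pi}_{1,X}^{\times}(\mathcal{F}),\]
which under the identifications of the previous paragraph is precisely $\tilde{\pi}_{1,X}^{\times}(\mathcal{F})|_U$, as desired.

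Since the conceptual heavy lifting has already been carried out in Propositions \ref{quasicoherent} and \ref{qcoh2}, I do not expect any real obstacle. The one subtlety to watch is to ensure the identification of $\tilde{\pi}_{1,U}^{\times}$ with the base change of $\tilde{\pi}_{1,X}^{\times}$ is natural in the input $\mathcal{F}$, so that evaluating a functorial equivalence on a specific object is legitimate; this is automatic from the way the module-category base change is set up, but is worth spelling out to confirm that the chain of canonical isomorphisms genuinely composes to the claimed one.
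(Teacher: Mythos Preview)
Your proposal is correct and follows essentially the same approach as the paper. The paper's proof simply cites Theorem~\ref{BC2} (functoriality of the relative tensor product, applied with $f=\tilde{\pi}_{1,X}^{\times}$ and $g=u^*$) together with Proposition~\ref{qcoh2}; your argument unpacks exactly this commutative square in words, using Proposition~\ref{quasicoherent} to interpret the codomain restriction and Corollary~\ref{AGcoro1} to justify that $\tilde{\pi}_{1,X}^{\times}$ is $\QC{X}$-linear so that the base-change makes sense.
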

\begin{proof}
  This is a direct consequence of Theorem \ref{BC2} applied to pullback along $u:U \to X$ and upper cross functor $\tilde{\pi}_1^{\times}$
  and the above proposition.
\end{proof}
\begin{corollary}
  \[D_{X/S}|_U \cong D_{U/S}\]
\end{corollary}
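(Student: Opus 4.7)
The plan is to deduce this directly from Proposition \ref{prop14} by taking the structure sheaf as input. By the very definition of $D_{X/S}$ in Definition \ref{bigdef}, we have $D_{X/S} = \tilde{\pi}_{1,X}^{\times}(\mathcal{O}_X)$, and similarly $D_{U/S} = \tilde{\pi}_{1,U}^{\times}(\mathcal{O}_U)$. So the goal reduces to showing that applying $\tilde{\pi}_{1,X}^{\times}$ to $\mathcal{O}_X$ and then restricting to $U$ gives the same result as applying $\tilde{\pi}_{1,U}^{\times}$ to $\mathcal{O}_U$.

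First I would apply Proposition \ref{prop14} in the case $\mathcal{F} = \mathcal{O}_X$, which yields
\[\tilde{\pi}_{1,X}^{\times}(\mathcal{O}_X)\big|_U \cong \tilde{\pi}_{1,U}^{\times}\bigl(\mathcal{O}_X\big|_U\bigr).\]
The only remaining point is to identify $\mathcal{O}_X|_U$ with $\mathcal{O}_U$. This is immediate: the restriction $\mathcal{F} \mapsto \mathcal{F}|_U$ on $\QC{X}$ is by construction the étale pullback $u^*$ (this is how the $\QC{X}$-action on $\QC{U}$ in Proposition \ref{quasicoherent} was normalized), and $u^*\mathcal{O}_X \cong \mathcal{O}_U$ since pullback is symmetric monoidal and carries the unit to the unit.

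Combining these two identifications gives
\[D_{X/S}\big|_U \cong \tilde{\pi}_{1,X}^{\times}(\mathcal{O}_X)\big|_U \cong \tilde{\pi}_{1,U}^{\times}(\mathcal{O}_U) \cong D_{U/S},\]
as desired. There is no real obstacle here; all the technical content has already been packaged into Propositions \ref{quasicoherent}, \ref{qcoh2}, and \ref{prop14}, so this corollary is essentially a formal unwinding of the definitions together with the fact that pullback preserves the structure sheaf.
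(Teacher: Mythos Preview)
Your proposal is correct and follows exactly the intended route: the paper states this corollary immediately after Proposition~\ref{prop14} with no separate proof, so the implicit argument is precisely to specialize that proposition to $\mathcal{F} = \mathcal{O}_X$ and use $u^*\mathcal{O}_X \cong \mathcal{O}_U$, as you do.
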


It is easily checked that the entire story behaves well with respect to base-change in $S$. For example, suppose we have a map $q: S' \to S$
of spectral Noetherian schemes, then we can consider $X'= X \times_S S'$ living over $S'$. The base-change
of $D_{X/S}$ to $\Gamma_{\Delta}(\QC{X' \times_{S'} X'})$ is then $D_{X'/S'}$.
  
As in the classical definition, which we will later show agrees with our definition in the smooth case,
$D_X$ carries a ring structure. However, we will leave this to an upcoming work \cite{upcoming}.
Nevertheless we note here there is a natural convolution monoidal structure on
$\Gamma_{\Delta}(\QC{X \times_S X})$.

\begin{theodef} \label{convprod}
  There is a natural convolution monoidal structure on $\Gamma_{\Delta}(\QC{X \times_S X})$.
  $\QC{X}$ has the structure of a left $\Gamma_{\Delta}(\QC{X \times X})$ module with respect to this
  monoidal structure.
\end{theodef}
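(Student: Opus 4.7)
The cleanest approach is to first build the convolution monoidal structure on the ambient category $\QC{X \times_S X}$ and then verify that the subcategory $\Gamma_{\Delta}(\QC{X \times_S X})$ is closed under it. Writing $\pi_{12}, \pi_{13}, \pi_{23}: X \times_S X \times_S X \to X \times_S X$ for the three pair-projections from the triple product, I would set
\[K_1 \star K_2 := \pi_{13,*}\bigl(\pi_{12}^* K_1 \otimes \pi_{23}^* K_2\bigr),\]
with unit $\delta_* \mathcal{O}_X$. Monoidal associativity and unitality are packaged by a standard base change / projection formula diagram-chase on the $4$-fold product; alternatively, in this perfect-stack setting the Ben-Zvi--Francis--Nadler identification $\QC{X \times_S X} \simeq \mathrm{Fun}^{L}_{\QC{S}}(\QC{X}, \QC{X})$ produces the monoidal structure by translating convolution into composition of endofunctors (with unit the identity functor) and the left action of $\QC{X \times_S X}$ on $\QC{X}$ as simple evaluation.

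The restriction to $\Gamma_{\Delta}$ then reduces to a support computation. The unit $\delta_* \mathcal{O}_X$ lies in $\Gamma_{\Delta}$ tautologically. For the product: if $K_i \in \Gamma_{\Delta}$, then $\pi_{12}^* K_1$ is supported (derived-ly, in the sense of $\Gamma_{-}$) on $\pi_{12}^{-1}(\Delta) = \{(x,x,z)\}$ and $\pi_{23}^* K_2$ on $\{(x,y,y)\}$, so their tensor product sits in the subcategory supported on the intersection, namely the small diagonal $\{(x,x,x)\} \subseteq X \times_S X \times_S X$. Since the composition of the small-diagonal inclusion with $\pi_{13}$ equals the big diagonal $\delta: X \to X \times_S X$, pushforward along $\pi_{13}$ lands in $\Gamma_{\Delta}(\QC{X \times_S X})$. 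Each of the three support-preservation facts (pullback, tensor product, and pushforward onto the image) reduces, by base change along the complementary open immersion, to the defining vanishing of $\Gamma_Z$ on $X \setminus Z$.

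For the left module structure on $\QC{X}$, the natural formula is $K \cdot F := \pi_{2,*}(K \otimes \pi_1^* F)$, the Fourier--Mukai transform with kernel $K$, and compatibility with $\star$ is one more base change computation on the triple product (or tautological in the endofunctor picture). All of this restricts to $\Gamma_{\Delta}$ automatically. The main obstacle is therefore not verifying any single identity but rather coherently assembling all of the compatibilities into an honest $(\infty, 1)$-monoidal category structure and an $E_1$-module structure on $\QC{X}$, as opposed to merely checking axioms on objects. The cleanest way to sidestep this bookkeeping is to produce the whole structure first on the ambient $\QC{X \times_S X}$ (where it is well-known, via Ben-Zvi--Francis--Nadler in this perfect-stack setting) and then invoke that a full subcategory closed under an $E_1$-monoidal product, together with a compatible closed subcategory of a module, automatically inherits the structure; the work is then entirely in the support calculation above.
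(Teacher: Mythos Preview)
Your proposal is correct and matches the paper's approach essentially verbatim: the paper also obtains the monoidal structure from the identification $\QC{X \times_S X} \cong \End_S(\QC{X},\QC{X})$ (your Ben-Zvi--Francis--Nadler endofunctor picture), writes down the same explicit convolution formula, and then asserts that $\Gamma_{\Delta}$ is closed under the product. Your support computation for the closure step is actually more detailed than what the paper supplies (it simply says this is ``easy to see''), so there is nothing to correct.
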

\begin{proof}
  We construct it by inducing it from $\QC{X \times_S X}$. We have the isomorphism
  \[\QC{X \times_S X} \cong \End_S(\QC{X},\QC{X})\]
  The right hand side is endomorphisms of $\QC{X}$ inside $\Mod{\QC{S}}^L$, and thus
  has a natural monoidal structure.
  This gives the desired monoidal structure on $\QC{X \times_S X}$.
  It is easy to see that $\Gamma_{\Delta}(\QC{X \times_S X})$
  is closed under this product, and so inherits a convolution monoidal structure. The second
  part of the theorem is clear from our construction.
  
  To be explicit, given two quasicoherent sheaves $F$ and $G$ on $X \times X$, their convolution is simply
  \[F \star G := \pi_{1,3,*}(\pi_{1,2}^*F \otimes \pi_{2,3}^*G)\]
  where
  \[\pi_{i,j} : X \times X \times X \to X \times X\]
  are the obvious projection maps.
\end{proof}
\begin{remark}
  If $X = \Spec{A}$, this tensor product for $A$-bimodules is simply given by tensoring the two $A$-bimodules together over $A$.
\end{remark}

For $U$ étale over $X$, one can easily check that the pullback map
\[\Gamma_{\Delta}(\QC{X \times_S X}) \to \Gamma_{\Delta}(\QC{U \times_S U})\]
is monoidal with respect to the convolution product.
In an upcoming paper \cite{upcoming}, we will show that $D_{X/S}$ is an algebra with respect
to the convolution monoidal product defined above. 

\section{Dualizing Complexes and the Upper Shriek Functor} \label{upshriek}
This section is dedicated to defining the upper shriek functor and proving some properties of it.
Almost all of the results in this section are, in some form,
contained in \cite{ILN} and \cite{N1}. The key differences are the order of presentation--we
define the upper shriek functor without compactifications at all and develop its properties from scratch--and
the fact that we make heavy use of the category $\Gamma_{\Delta}(\QC{X \times_S X})$, which is morally ``proper'' over $X$.
We are motivated to study this subcategory for its own sake in view of its connections with differential operators.

We begin by defining the upper shriek functor 
for a map $p_X : X \to S$ with the assumptions from the previous section.
\begin{definition} \label{defshriek}
  The upper shriek functor $p_X^!: \QC{S} \to \QC{X}$ is defined by
  \[p_X^!:= \delta^*\pi_1^{\times}p_X^*\]
  where $\delta : X \to X \times_S X$ is the diagonal map.
\end{definition}
This formula for the upper shriek functor appears in many places in the literature, e.g. \cite{N1} Proposition 3.3,
however here we will take it as a definition.
The main property of upper shriek is that it behaves well under composition, that is
\[(fg)^! \cong g^!f^!\]
and that it interpolates between upper-cross pullback in the proper case and upper-star pullback in the étale case. This is
what we aim to show in this section.

The pullback functor along the diagonal
\[\delta^* : \QC{X \times_S X} \to \QC{X}\]
factors through the local cohomology functor $\Gamma_{\Delta}$, namely
\[\delta^* \cong \tilde{\delta}^*\Gamma_{\Delta}\]
Therefore,
\begin{equation} \label{eq2}
  p_X^! \cong \tilde{\delta}^*\tilde{\pi}_1^{\times}p_X^*
\end{equation}
From the above we see the upper shriek functor is colimit preserving and $\QC{S}$-linear.
\begin{proposition} \label{csk2}
  Suppose $u : U \to X$ is a separated étale map, then
  \[u^! = u^*\]
\end{proposition}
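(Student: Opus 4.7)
The plan is to show that under the étale separated hypothesis, the defining diagram for $u^!$ degenerates essentially trivially. The key classical observation is that an étale separated map has a clopen diagonal: since $u$ is étale the diagonal $\delta_u : U \to U \times_X U$ is itself étale, and since $u$ is separated it is a closed immersion; an étale closed immersion is automatically also open.

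Given this, I would write $U \times_X U \cong U \sqcup V$ with $V$ the complement of the diagonal, which yields $\QC{U \times_X U} \cong \QC{U} \times \QC{V}$. Under this splitting, the full subcategory $\Gamma_{\Delta}(\QC{U \times_X U})$ of diagonally supported sheaves is identified with the first factor $\QC{U}$: the inclusion $i_\Delta$ corresponds to $G \mapsto (G,0)$ (equivalently, to $\delta_{u,*}$), and the local cohomology functor $\Gamma_\Delta$ to the projection onto the first factor.

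Next I would observe that $\pi_1 \circ \delta_u = \id_U$, so $\tilde{\pi}_{1,*} = \pi_{1,*} \circ i_\Delta$ acts as the identity on $\QC{U}$, since it sends $G$ to $\pi_{1,*}(\delta_{u,*}G) = G$. Passing to right adjoints, $\tilde{\pi}_{1}^{\times}$ is likewise the identity, and $\tilde{\delta}^*$ is as well under the identification above. Plugging into the rewriting $u^! \cong \tilde{\delta}^* \tilde{\pi}_{1}^{\times} u^*$ supplied by Equation (\ref{eq2}) then yields $u^! \cong u^*$.

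The one point requiring care is the identification of $\Gamma_{\Delta}(\QC{U \times_X U})$ with $\QC{U}$: one needs to know that ``support on the closed subscheme $\delta_u(U)$'' coincides with ``vanishing restriction to the open complement $V$'', which is immediate once $\delta_u(U)$ is a direct summand of $U \times_X U$. Beyond this, the proof is a routine unwinding of the formula.
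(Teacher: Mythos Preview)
Your argument is correct. Both your approach and the paper's arrive at the same key identification $\Gamma_{\Delta}(\QC{U \times_X U}) \cong \QC{U}$, under which $\tilde{\pi}_{1,*}$, $\tilde{\pi}_1^{\times}$, and $\tilde{\delta}^*$ all become the identity, and then conclude via Equation~(\ref{eq2}). The difference is in how that identification is obtained. You use the classical fact that the diagonal of a separated étale map is a clopen immersion, giving a splitting $U \times_X U \cong U \sqcup V$ from which everything is read off directly. The paper instead invokes the more general Proposition~\ref{AGinput2} (that $\Gamma_Z(\QC{Y}) \cong \Gamma_Z(\QC{W})$ whenever a closed immersion $Z \hookrightarrow Y$ factors through an étale $W \to Y$), applied to the identity $U \to U \times_X X \cong U$ with its lift along the diagonal to $U \times_X U$; this yields $\QC{U} \cong \Gamma_\Delta(\QC{U \times_X U})$ with $\tilde{\pi}_{1,*}$ and $\Gamma_\Delta\pi_1^*$ as mutually inverse equivalences, whence $\tilde{\pi}_1^{\times} \cong \Gamma_\Delta\pi_1^*$. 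Your route is more elementary and self-contained for this particular statement; the paper's route keeps the argument uniform with its general machinery (Proposition~\ref{AGinput2} is also the engine behind Proposition~\ref{quasicoherent}) and avoids having to separately justify the clopen-diagonal decomposition in the spectral setting.
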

\begin{proof}
  This follows from the Proposition \ref{AGinput2} applied to the closed immersion $U \to U \times_X X$ with a lift to
  $U \times_X U$. Namely, we know that
  \[\tilde{\pi}_{1,*} : \Gamma_{\Delta}(\QC{U \times_X U}) \to \QC{U}\] is an isomorphism and its inverse and adjoint (on both sides) is
  $\Gamma_{\Delta}\pi_1^*$ (where $\pi_1 : U \times_X U \to U$ is the projection map to the first component). Therefore
  \begin{equation*}
    \begin{split}
      u^! 
      &\cong \tilde{\delta}^*\tilde{\pi}_1^{\times}u^* \\
      &\cong \tilde{\delta}^*\Gamma_{\Delta}\pi_1^*u^* \\
      &\cong u^* \\
    \end{split}
  \end{equation*}
\end{proof}

\begin{proposition} \label{openshriek}
  Suppose $u : U \to X$ is a separated étale map, and $p_U : U \to S$ is the structure map, then
  \[p_U^! \cong u^*p_X^! \cong u^!p_X^! \]
\end{proposition}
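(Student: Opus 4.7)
The plan is as follows. The second isomorphism $u^* p_X^! \cong u^! p_X^!$ is immediate from Proposition \ref{csk2}, which gives $u^! \cong u^*$ for a separated étale $u$. For the first isomorphism $p_U^! \cong u^* p_X^!$, the strategy is to show that each of the three factors in the formula $p_X^! \cong \tilde{\delta}_X^* \tilde{\pi}_{1,X}^{\times} p_X^*$ from Equation (\ref{eq2}) is étale-local on $X$, so that restricting along $u$ produces the same formula for $U$. Since $p_U = p_X \circ u$, we have $u^* p_X^* \cong p_U^*$ automatically, so it suffices to produce a natural isomorphism $u^* \tilde{\delta}_X^* \tilde{\pi}_{1,X}^{\times} p_X^* \mathcal{F} \cong \tilde{\delta}_U^* \tilde{\pi}_{1,U}^{\times} p_U^* \mathcal{F}$ for all $\mathcal{F} \in \QC{S}$.

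I plan to do this by chaining two compatibilities. The first, provided by Proposition \ref{prop14}, says that restriction $(\cdot)|_U$ commutes with $\tilde{\pi}_1^{\times}$:
\[ (\tilde{\pi}_{1,X}^{\times}(p_X^* \mathcal{F}))|_U \cong \tilde{\pi}_{1,U}^{\times}((p_X^* \mathcal{F})|_U) \cong \tilde{\pi}_{1,U}^{\times}(p_U^* \mathcal{F}). \]
The second, which I expect to be the main obstacle, is the analog of Proposition \ref{qcoh2} for $\tilde{\delta}^*$ in place of $\tilde{\pi}_1^{\times}$: namely $u^* \tilde{\delta}_X^* \mathcal{G} \cong \tilde{\delta}_U^*(\mathcal{G}|_U)$ for $\mathcal{G} \in \Gamma_{\Delta}(\QC{X \times_S X})$. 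Applying this to $\mathcal{G} = \tilde{\pi}_{1,X}^{\times} p_X^* \mathcal{F}$ and combining with the first compatibility will give the desired isomorphism.

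For this second compatibility, I will exploit that $X \to S$ is separated, so $\delta_X : X \to X \times_S X$ is a closed immersion and $\delta_{X,*}$ factors through $\Gamma_{\Delta}(\QC{X \times_S X})$. Then $\tilde{\delta}_{X,*}: \QC{X} \to \Gamma_{\Delta}(\QC{X \times_S X})$ is colimit-preserving and $\QC{X}$-linear (via the projection formula), with right adjoint $\tilde{\delta}_X^*$. The same formal argument as in Proposition \ref{qcoh2}---that tensoring with $\QC{U}$ over $\QC{X}$ preserves adjoints of colimit-preserving $\QC{X}$-linear functors between quasicoherent sheaves of categories, and that $\tilde{\delta}_{X,*}$ base-changes to $\tilde{\delta}_{U,*}$---then yields the claim. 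As a direct alternative, one can argue using the equality $\delta_X \circ u = (u \times u) \circ \delta_U$, which gives $u^* \delta_X^* \cong \delta_U^* (u \times u)^*$; combining this with $\delta^* \cong \tilde{\delta}^* \Gamma_{\Delta}$ on both sides reduces the statement to identifying the restriction $(\cdot)|_U$ from Proposition \ref{quasicoherent} with the functor $\Gamma_{\Delta_U}(u \times u)^*$, which can be checked by tracing through the equivalence of Proposition \ref{quasicoherent} (whose proof itself uses an intermediate step through $\Gamma_{\Delta}(\QC{U \times_S X})$).
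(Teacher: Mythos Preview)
Your overall approach matches the paper's: both use Proposition \ref{csk2} for the second isomorphism, and for the first both unwind $p_U^!$ via Equation (\ref{eq2}), invoke Proposition \ref{prop14} to commute restriction past $\tilde{\pi}_1^{\times}$, and then use a compatibility $u^*\tilde{\delta}_X^* \cong \tilde{\delta}_U^*(\cdot)|_U$. The paper in fact does not justify this last compatibility at all, treating it as immediate.

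There is one slip in your first proposed justification of that compatibility: $\tilde{\delta}^*$ is the \emph{left} adjoint of $\tilde{\delta}_*$, not the right (this is just $\delta^* \dashv \delta_*$ restricted to the diagonally supported subcategory). So the argument ``$\tilde{\delta}_*$ is $\QC{X}$-linear and colimit-preserving, hence its right adjoint base-changes'' would produce a statement about $\tilde{\delta}^{\times}$, not $\tilde{\delta}^*$. The fix is simpler than either of your two routes: $\tilde{\delta}^*$ is itself already a colimit-preserving $\QC{X}$-linear functor (since $\delta^*$ is symmetric monoidal and $\delta^*\pi_1^* \cong \id$), so the formal reasoning of Proposition \ref{qcoh2}---that tensoring with $\QC{U}$ over $\QC{X}$ carries a $\QC{X}$-linear functor to its $U$-analogue---applies directly to $\tilde{\delta}^*$, giving $u^*\tilde{\delta}_X^* \cong \tilde{\delta}_U^*(\cdot)|_U$ without any passage through an adjoint. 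Your ``direct alternative'' via $(u\times u)\circ \delta_U = \delta_X \circ u$ would also work, but is more effort than needed.
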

\begin{proof}
  The second isomorphism follows from the previous proposition, the first follows because
  \begin{equation*}
    \begin{split}
      p_U^! \mathcal{F}
      &\cong \tilde{\delta}_U^* \tilde{\pi}_{1,U}^{\times} p_U^* \mathcal{F}\\
      &\cong \tilde{\delta}_U^* \tilde{\pi}_{1,U}^{\times} u^* p_X^* \mathcal{F}\\
      &\cong \tilde{\delta}_U^* (\tilde{\pi}_{1,X}^{\times} p_X^* \mathcal{F})|_U\\
      &\cong u^*\tilde{\delta}_X^* \tilde{\pi}_{1,X}^{\times} p_X^* \mathcal{F}\\
    \end{split}
  \end{equation*}
  where the third isomorphism uses Proposition \ref{prop14}.
\end{proof}

\begin{definition} \label{dualizedef}
  We define the dualizing complex of a scheme $X$ by
  \begin{equation}
    \omega_{X/S} := p_X^!(\mathcal{O}_S) \cong \tilde{\delta}^*D_{X/S} 
  \end{equation}
\end{definition}
We note that in the affine case we simply have
\[\omega_{A/k} \cong D_{A/k} \otimes_{A \otimes_k A} A \cong \Hom_k(A,A) \otimes_{A \otimes_k A} A\]
Because of the Proposition \ref{openshriek}, for a separated étale $u : U \to X$, we have
\[u^*\omega_{X/S} \cong \omega_{U/S}\]
Also, $\omega$ behaves well under base-change with respect to $S$. Namely, if $q: S' \to S$ is a map
of spectral Noetherian schemes, there is an isomorphism
\[\omega_{X \times_S S'/S'} \cong (\id \times q)^*(\omega_{X/S})\]
To be general,
\begin{theorem}\label{shriekbase}
  Suppose we have the following pullback diagram (of homologically bounded separated spectral Noetherian schemes)
  \[
    \begin{tikzcd}
      Y_{S'} \cong Y \times_S S'  \arrow{r}{\pi_2} \arrow{dd}{\pi_1} & Y \arrow{dd}{p_Y}\\ \\
      S' \arrow{r}{p_{S'}} & S \\
    \end{tikzcd}
  \]
  where $p_Y$ is finite tor-amplitude and all maps are locally almost of finite presentation.
  Then
  \[\pi_1^!p_{S'}^* \cong \pi_2^*p_Y^!\]
\end{theorem}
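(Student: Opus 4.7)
The plan is to unfold $\pi_1^!$ and $p_Y^!$ using Definition \ref{defshriek}, and then reduce the assertion to three separate compatibilities: a commutativity of upper-star pullbacks, a base change for the upper cross $\tilde{\pi}_1^\times$, and a compatibility of $\tilde{\delta}^*$ with pullback along $\pi_2$. Since the outer square commutes, $\pi_1^* p_{S'}^* \cong \pi_2^* p_Y^*$, so
\[
\pi_1^! p_{S'}^* \cong \tilde{\delta}_{Y_{S'}}^* \, \tilde{\pi}_{1,Y_{S'}}^\times \, \pi_2^* \, p_Y^*,
\qquad
\pi_2^* p_Y^! \cong \pi_2^* \, \tilde{\delta}_Y^* \, \tilde{\pi}_{1,Y}^\times \, p_Y^*,
\]
so it suffices to prove $\tilde{\delta}_{Y_{S'}}^* \tilde{\pi}_{1,Y_{S'}}^\times \pi_2^* \cong \pi_2^* \tilde{\delta}_Y^* \tilde{\pi}_{1,Y}^\times$ as functors $\QC{Y} \to \QC{Y_{S'}}$.

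Next I would set up the key base-change square on the products. One has a canonical identification
\[
Y_{S'} \times_{S'} Y_{S'} \;\cong\; (Y \times_S Y) \times_Y Y_{S'},
\]
yielding a (derived, hence tor-independent) cartesian square
\[
\begin{tikzcd}
Y_{S'} \times_{S'} Y_{S'} \arrow{r}{\tilde{\pi}_2} \arrow{d}{\pi_{1,Y_{S'}}} & Y \times_S Y \arrow{d}{\pi_{1,Y}}\\
Y_{S'} \arrow{r}{\pi_2} & Y
\end{tikzcd}
\]
in which $\pi_{1,Y}$ is finite tor-amplitude (as base change of $p_Y$). The middle step is then to invoke the general base-change theorem for the upper-cross functor (Theorem \ref{BC2}, the same one used in Proposition \ref{prop14}) applied to this square to get $\tilde{\pi}_{1,Y_{S'}}^\times \pi_2^* \cong \tilde{\pi}_2^* \tilde{\pi}_{1,Y}^\times$ between the two diagonally-supported categories, where $\tilde{\pi}_2^*$ denotes the pullback descended to $\Gamma_\Delta$ (available because $\tilde{\pi}_2^*$ sends diagonally supported sheaves to diagonally supported sheaves, as the diagonal of $Y_{S'}/S'$ sits over the diagonal of $Y/S$).

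The final step is to dispatch the diagonal. The square
\[
\begin{tikzcd}
Y_{S'} \arrow{r}{\delta_{Y_{S'}}} \arrow{d}{\pi_2} & Y_{S'} \times_{S'} Y_{S'} \arrow{d}{\tilde{\pi}_2}\\
Y \arrow{r}{\delta_Y} & Y \times_S Y
\end{tikzcd}
\]
is cartesian, so by flat-plus-tor-independent base change for $*$-pullbacks we have $\pi_2^* \delta_Y^* \cong \delta_{Y_{S'}}^* \tilde{\pi}_2^*$, and this descends to an isomorphism $\pi_2^* \tilde{\delta}_Y^* \cong \tilde{\delta}_{Y_{S'}}^* \tilde{\pi}_2^*$ on the diagonally-supported subcategories since $\tilde{\delta}^*$ factors $\delta^*$ through $\Gamma_\Delta$. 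Splicing this with the previous step yields
\[
\tilde{\delta}_{Y_{S'}}^* \tilde{\pi}_{1,Y_{S'}}^\times \pi_2^* \;\cong\; \tilde{\delta}_{Y_{S'}}^* \tilde{\pi}_2^* \tilde{\pi}_{1,Y}^\times \;\cong\; \pi_2^* \tilde{\delta}_Y^* \tilde{\pi}_{1,Y}^\times,
\]
which is what was required.

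The main obstacle is the middle step, namely verifying that Theorem \ref{BC2} really does apply to the (non-étale) base change along $\pi_2$. This hinges on two facts: that the upper-cross functor is available and colimit preserving in this generality (which follows from Corollary \ref{AGcoro1} applied to $Y_{S'}/S'$, using that finite tor-amplitude is preserved under base change), and that the cartesian square on the products is tor-independent, which is automatic because fibre products are interpreted derivedly throughout the paper. Once these inputs are secured, the two compatibilities above are formal and the proof concludes.
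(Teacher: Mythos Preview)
Your argument is correct and is essentially a detailed unpacking of the paper's one-sentence proof, which simply asserts that the entire construction base-changes well with respect to $S$. The only point worth tightening is your middle step: Theorem \ref{BC2} is a purely formal statement about $(f\otimes 1)\circ g \cong (1\otimes g)\circ f$, so to extract $\tilde{\pi}_{1,Y_{S'}}^\times \pi_2^* \cong \tilde{\pi}_2^* \tilde{\pi}_{1,Y}^\times$ from it you implicitly need the identification
\[
\Gamma_\Delta(\QC{Y \times_S Y}) \otimes_{\QC{Y}} \QC{Y_{S'}} \;\cong\; \Gamma_\Delta(\QC{Y_{S'} \times_{S'} Y_{S'}})
\]
together with the fact that $\tilde{\pi}_{1,Y}^\times \otimes 1$ corresponds to $\tilde{\pi}_{1,Y_{S'}}^\times$ under it (the analogue of Propositions \ref{quasicoherent} and \ref{qcoh2}). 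Your two listed inputs---colimit-preservation and tor-independence---do not by themselves yield this; what makes it work is that your cartesian diagonal square shows $\tilde{\pi}_2^{-1}(\Delta_Y)=\Delta_{Y_{S'}}$ exactly, so the split-exact-sequence argument of Proposition \ref{quasicoherent} gives the identification directly, without the \'etaleness input of Proposition \ref{AGinput2}. Once that is noted, the analogue of Proposition \ref{qcoh2} goes through verbatim and your proof concludes.
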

\begin{proof}
  The entire construction base-changes well with respect to $S$, so this is clear.
\end{proof}

Because $p_X^!$ is colimit preserving and $\QC{S}$-linear, we have
\begin{equation} \label{shriekeq}
  p_X^!(\mathcal{F}) \cong \omega_{X/S} \otimes p_X^*\mathcal{F}
\end{equation}

\begin{remark}
  Equation (\ref{shriekeq}), combined with the analogous statement for the classically defined
  upper-shriek functor (see \cite{Basechangeneeman} Remark 1.22) implies via Corollary 4.7 of \cite{AILN} that our upper-shriek functor
  agrees with the classical one for finite tor-amplitude, finite-type, separated morphisms
  of non-derived Noetherian schemes.
\end{remark}

\begin{proposition} \label{crossshriek}
  There is a natural transformation
  \[p_X^{\times} \to p_X^!\]
  and hence also a natural map
  \[\Hom(p_{X,*}\mathcal{F},\mathcal{G}) \to \Hom(\mathcal{F},p_X^!\mathcal{G})\]
  for $\mathcal{F} \in \QC{X}$ and $\mathcal{G} \in \QC{S}$.
\end{proposition}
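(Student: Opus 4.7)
The plan is to construct the natural transformation $p_X^{\times} \to p_X^!$ as the Beck--Chevalley base-change mate associated to the Cartesian square \eqref{eq1}, then restricted along the diagonal. The starting observation is that $\pi_2 \delta = \id_X$, so $\delta^* \pi_2^* \cong \id_{\QC{X}}$, yielding the identification $p_X^{\times} \cong \delta^*\pi_2^* p_X^{\times}$. Applying $\delta^*$ to the mate natural transformation $\pi_2^* p_X^{\times} \to \pi_1^{\times} p_X^*$ then produces the composite
\[ p_X^{\times} \cong \delta^*\pi_2^* p_X^{\times} \longrightarrow \delta^*\pi_1^{\times} p_X^* = p_X^!, \]
which is the desired map.

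To produce the mate $\pi_2^* p_X^{\times} \to \pi_1^{\times} p_X^*$, I will use the standard construction: starting from the flat base-change isomorphism $p_X^* p_{X,*} \xrightarrow{\sim} \pi_{1,*} \pi_2^*$ (available because $p_X$ is finite tor-amplitude), I combine the unit of $\pi_{1,*} \dashv \pi_1^{\times}$ with the counit of $p_{X,*} \dashv p_X^{\times}$ to form
\[ \pi_2^* p_X^{\times} \longrightarrow \pi_1^{\times} \pi_{1,*} \pi_2^* p_X^{\times} \xrightarrow{\sim} \pi_1^{\times} p_X^* p_{X,*} p_X^{\times} \longrightarrow \pi_1^{\times} p_X^*. \]
The second assertion will then be immediate from the $p_{X,*} \dashv p_X^{\times}$ adjunction: the isomorphism $\Hom(p_{X,*} \mathcal{F}, \mathcal{G}) \cong \Hom(\mathcal{F}, p_X^{\times} \mathcal{G})$ composed with post-composition by the natural transformation $p_X^{\times}\mathcal{G} \to p_X^!\mathcal{G}$ from the first part yields the claimed map $\Hom(p_{X,*}\mathcal{F},\mathcal{G}) \to \Hom(\mathcal{F}, p_X^!\mathcal{G})$.

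I expect no serious obstacle here; the construction is essentially formal once the appropriate flat base-change isomorphism is in hand (and the paper has already set up such base change, e.g.\ in Theorem \ref{BC2}). The only care required is bookkeeping of the mate---variances, direction of arrows, and correct placement of unit and counit. As a sanity check, when $p_X$ is proper the mate $\pi_2^* p_X^{\times} \to \pi_1^{\times} p_X^*$ is an isomorphism (this is Grothendieck-duality base change), so the natural transformation $p_X^{\times} \to p_X^!$ constructed above becomes an isomorphism---consistent with statement (2) of the theorem stated in the introduction.
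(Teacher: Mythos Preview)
Your proposal is correct and follows essentially the same approach as the paper: the paper also constructs the mate $\pi_2^{*}p_X^{\times} \to \pi_1^{\times}p_X^{*}$ from the push--pull (base-change) isomorphism $\pi_{1,*}\pi_2^{*}p_X^{\times} \cong p_X^{*}p_{X,*}p_X^{\times} \to p_X^{*}$ via the $\pi_{1,*} \dashv \pi_1^{\times}$ adjunction, and then pulls back along $\delta$. Your write-up simply unpacks the mate construction a bit more explicitly (unit of $\pi_{1,*} \dashv \pi_1^{\times}$, then base-change, then counit of $p_{X,*} \dashv p_X^{\times}$), and your derivation of the second map via the $p_{X,*} \dashv p_X^{\times}$ adjunction is exactly what the paper intends by ``hence.''
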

\begin{proof}
  Consider the square
  \[
    \begin{tikzcd}
      X \times_S X  \arrow{r}{\pi_2} \arrow{dd}{\pi_1} & X \arrow{dd}{p} \\ \\
      X \arrow{r}{p} & S\\
    \end{tikzcd}
  \]
  From the push-pull isomorphism, there is a map 
  \[\pi_{1,*}\pi_2^*p_X^{\times} \cong p_X^*p_{X,*}p_X^{\times} \to p_X^*\]
  hence by adjunction, we have a map
  \[\pi_2^{*}p_X^{\times} \to \pi_1^{\times}p_X^*\]
  Pulling back along $\delta$ gives a map
  \[p_X^{\times} \to \delta^*\pi_1^{\times}p_X^* \cong p_X^!\]
\end{proof}

Suppose $g: Y \to X$ over $S$ is finite tor-amplitude such that the composition $p_x \circ g$ is proper,
then we have (this result is Lemma 3.1 in \cite{N1}).
\begin{theorem} \label{crossshriek2}
  The natural transformation $p_X^{\times} \to p_X^!$ is an isomorphism after post-composition with $g^{\times}$.
\end{theorem}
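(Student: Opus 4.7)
The plan is to exhibit the map $g^\times p_X^\times \to g^\times p_X^!$ as essentially a single Beck--Chevalley isomorphism for a Cartesian rectangle in which $h := p_X \circ g$ appears as one arm; properness of $h$ then guarantees the map is an isomorphism.

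First, note that since $h$ is proper and $p_X$ is separated, $g$ itself is proper, and hence $g \times \id_X : Y \times_S X \to X \times_S X$ is also proper. Consider the commutative diagram
\[
\begin{tikzcd}
Y \arrow{r}{\Gamma_g} \arrow{d}{g} & Y \times_S X \arrow{r}{q_2} \arrow{d}{g \times \id} & X \arrow{d}{p_X} \\
X \arrow{r}{\delta} & X \times_S X \arrow{r}{\pi_2} & X
\end{tikzcd}
\]
where $\Gamma_g : Y \to Y \times_S X$ is the graph of $g$ (a closed immersion since $X \to S$ is separated), $q_1 : Y \times_S X \to Y$ and $q_2 : Y \times_S X \to X$ are the two projections, and the relations $q_1 \Gamma_g = \id_Y$ and $q_2 = \pi_2(g \times \id)$ hold. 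The left square is Cartesian with $g \times \id$ proper, and the outer rectangle formed by $q_1, q_2, p_X, h$ is Cartesian with $h$ proper.

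The computation proceeds via two Beck--Chevalley mates. First, using the flip involution $\sigma$ of $X \times_S X$ (which fixes $\delta$ and swaps $\pi_1, \pi_2$), I would rewrite $p_X^! \cong \delta^* \pi_2^\times p_X^*$. Proper base change for the left Cartesian square yields the mate isomorphism $g^\times \delta^* \cong \Gamma_g^* (g \times \id)^\times$, and combining with $(g \times \id)^\times \pi_2^\times \cong q_2^\times$ (right adjoints of $\pi_{2,*}(g \times \id)_* = q_{2,*}$) gives
\[
g^\times p_X^! \cong \Gamma_g^* q_2^\times p_X^*.
\]
A second mate isomorphism, for the outer Cartesian rectangle with $h$ proper, gives $q_2^\times p_X^* \cong q_1^* h^\times$. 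Substituting and using $q_1 \Gamma_g = \id_Y$ produces $g^\times p_X^! \cong (q_1 \Gamma_g)^* h^\times = h^\times \cong g^\times p_X^\times$, since $g^\times p_X^\times = (p_X g)^\times = h^\times$.

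The main obstacle is to verify that the composite isomorphism constructed above agrees with the natural transformation obtained by applying $g^\times$ to the map of Proposition~\ref{crossshriek}. This reduces to a standard horizontal-pasting compatibility for Beck--Chevalley mates in the two-category of adjunctions: the natural map $\pi_2^* p_X^\times \to \pi_1^\times p_X^*$ is itself a Beck--Chevalley mate for the Cartesian square on $\pi_1, \pi_2, p_X, p_X$, and pasting it with the Beck--Chevalley mate for the left Cartesian square reproduces the single Beck--Chevalley mate for the outer rectangle, which is an isomorphism precisely because $h$ is proper.
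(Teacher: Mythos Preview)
Your argument is essentially the same as the paper's: both proofs hinge on the two proper base-change (Beck--Chevalley) isomorphisms $g^{\times}\delta^{*}\cong\Gamma_g^{*}(g\times\id)^{\times}$ (for the left Cartesian square, using that $g$ is proper) and $q_1^{*}h^{\times}\cong q_2^{\times}p_X^{*}$ (for the fibre-product square defining $Y\times_S X$, using that $h=p_X g$ is proper), together with $q_1\Gamma_g=\id_Y$ and the symmetry $\delta^{*}\pi_1^{\times}\cong\delta^{*}\pi_2^{\times}$; the paper invokes the latter implicitly in its last line, whereas you make it explicit via the flip $\sigma$. One small correction: in your displayed diagram the rightmost vertical arrow should be $\id_X$, not $p_X$, since $\pi_2\circ(g\times\id)=q_2$; the ``outer rectangle formed by $q_1,q_2,p_X,h$'' you actually use is the separate fibre-product square, not the outer rectangle of the diagram as drawn.
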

\begin{proof}
  Consider the diagram
  \[
    \begin{tikzcd}
      Y \times_S X  \arrow{r}{g \times \id} \arrow{dd}{\pi'_1} & X \times_S X \arrow{r}{\pi_2} \arrow{dd}{\pi_1} & X \arrow{dd}{p_X} \\ \\
      Y \arrow{r}{g} & X \arrow{r}{p_X} & S \\
    \end{tikzcd}
  \]
  The outer rectangle exhibits pull-cross base-change (Theorem \ref{BC2}), namely,
  \[\pi_1^{'*} g^{\times} p_X^{\times} \cong (g \times \id)^{\times} \pi_2^{\times} p_X^*\]
  The map exhibiting the isomorphism
  is formed using the pull-cross base-change maps for the two smaller squares.
  Now we post-compose the above isomorphism with the
  pullback along the graph of $g$, $\delta_g : Y \to Y \times_S X$, to get
  \[g^{\times} p_X^{\times} \cong \delta_g^*(g \times \id)^{\times} \pi_2^{\times} p_X^*\]
  Now looking at the pull-cross base-change for the diagram (since $g$ is also proper)
  \[
    \begin{tikzcd}
      Y \arrow{r}{\delta_g} \arrow{dd}{g} & Y \times_S X \arrow{dd}{g \times \id} \\ \\
      X \arrow{r}{\delta} & X \times_S X\\
    \end{tikzcd}
  \]
  namely,
  \[g^{\times}\delta^* \cong \delta_g^*(g \times \id)^{\times}\]
  We have,
  \[g^{\times} p_X^{\times} \cong \delta_g^*(g \times \id)^{\times} \pi_2^{\times} p_X^*
    \cong g^{\times}\delta^*\pi_2^{\times} p_X^* \cong g^{\times}p_X^!\]
  One checks that the map agrees with the map in the previous proposition
  post-composed with $g^{\times}$ by staring at the following combined diagram using
  the fact that the base-change for the left tall rectangle is trivial.
  \[
    \begin{tikzcd}
      Y  \arrow{r}{g} \arrow{dd}{\delta_g} & X \arrow{dd}{\delta}  \\ \\
      X \times_S Y  \arrow{r}{g \times \id} \arrow{dd}{\pi'_1} & X \times_S X \arrow{r}{\pi_2} \arrow{dd}{\pi_1} & X \arrow{dd}{p_X} \\ \\
      Y \arrow{r}{g} & X \arrow{r}{p_X} & S \\
    \end{tikzcd}
  \]
\end{proof}

\begin{corollary} \label{csk}
  Suppose $p_X : X \to S$ is proper, then $p_X^{\times} \cong p_X^!$.
\end{corollary}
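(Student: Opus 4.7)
The plan is to derive this immediately from Theorem \ref{crossshriek2} by specializing to $g = \id_X$. The theorem produces a natural transformation $p_X^{\times} \to p_X^!$ (coming from Proposition \ref{crossshriek}) and asserts that for any $g : Y \to X$ which is finite tor-amplitude and such that $p_X \circ g$ is proper, post-composition with $g^{\times}$ turns this transformation into an isomorphism.

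The key observation is that when $p_X$ is already proper, we are free to take $Y = X$ and $g = \id_X$. Then $g$ is trivially finite tor-amplitude, $p_X \circ g = p_X$ is proper by hypothesis, and $g^{\times} = \id^{\times} \cong \id$. Applying Theorem \ref{crossshriek2} to this choice of $g$ shows that the natural transformation $p_X^{\times} \to p_X^!$ becomes an isomorphism after composing with the identity functor, which is just saying that the transformation itself is an isomorphism.

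There is no real obstacle here; this corollary is an immediate unwinding of the previous theorem with the trivial choice $g = \id_X$. The only point worth double-checking is that the map produced in Theorem \ref{crossshriek2} really is, under this specialization, the same natural transformation constructed in Proposition \ref{crossshriek}, but this is precisely the compatibility noted in the final diagram-chase of the proof of Theorem \ref{crossshriek2}. Hence the proof consists of a single sentence invoking that theorem.
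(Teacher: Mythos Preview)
Your proposal is correct and is exactly the intended derivation: the paper states Corollary \ref{csk} without proof immediately after Theorem \ref{crossshriek2}, and the implicit argument is precisely to specialize that theorem to $g = \id_X$, so that $p_X \circ g = p_X$ is proper and $g^{\times} \cong \id$.
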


\begin{theorem} \label{crossshriek4}
  Suppose $\Lambda$ is a closed subset of $\abs{X}$ which is proper over $S$, then
  \[\Gamma_{\Lambda}p_X^{\times} \cong \Gamma_{\Lambda}p_X^!\]
\end{theorem}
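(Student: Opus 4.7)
The plan is to reduce the statement to Theorem \ref{crossshriek2} by testing the natural transformation $p_X^{\times} \to p_X^{!}$ of Proposition \ref{crossshriek} against an appropriate family of compact generators of $\Gamma_{\Lambda}(\QC{X})$. Both source and target of the map $\Gamma_{\Lambda}p_X^{\times} \to \Gamma_{\Lambda}p_X^{!}$ lie in $\Gamma_{\Lambda}(\QC{X})$, and by the adjunction $i_{\Lambda} \dashv \Gamma_{\Lambda}$ we have $\Hom(\mathcal{F},\Gamma_{\Lambda}\mathcal{H}) \cong \Hom(\mathcal{F},\mathcal{H})$ for $\mathcal{F} \in \Gamma_{\Lambda}(\QC{X})$. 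Hence it suffices to exhibit, for a generating family of such $\mathcal{F}$ and every $\mathcal{G} \in \QC{S}$, an isomorphism $\Hom(\mathcal{F},p_X^{\times}\mathcal{G}) \xrightarrow{\sim} \Hom(\mathcal{F},p_X^{!}\mathcal{G})$ induced by the natural transformation.

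I would take the generators to be of the form $\mathcal{F} = i_{*}\mathcal{F}'$, where $i : Z \hookrightarrow X$ is a Koszul-regular (hence finite tor-amplitude) closed immersion with $|Z| \subseteq \Lambda$ and $\mathcal{F}' \in \Perf{Z}$. Locally on $X$ these are modeled on Koszul complexes $K(f_{1},\ldots,f_{n}) \cong i_{*}\mathcal{O}_Z$ for sequences cutting out a thickening of $\Lambda$, and varying $Z$ appropriately they suffice to generate $\Gamma_{\Lambda}(\QC{X})$ as a localizing subcategory. Crucially, such an $i$ is finite tor-amplitude, and since $|Z| \subseteq \Lambda$ is proper over $S$ while $p_X$ is separated, the composition $p_X \circ i : Z \to S$ is proper (and $i$ itself is automatically proper).

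For each such generator, Theorem \ref{crossshriek2} applied with $g = i$ yields that $i^{\times}p_X^{\times} \to i^{\times}p_X^{!}$ is an isomorphism. By the adjunction $i_{*} \dashv i^{\times}$ this gives
\[ \Hom(i_{*}\mathcal{F}',p_X^{\times}\mathcal{G}) \cong \Hom(\mathcal{F}',i^{\times}p_X^{\times}\mathcal{G}) \xrightarrow{\sim} \Hom(\mathcal{F}',i^{\times}p_X^{!}\mathcal{G}) \cong \Hom(i_{*}\mathcal{F}',p_X^{!}\mathcal{G}), \]
which is the map induced by $p_X^{\times} \to p_X^{!}$. Running this over the generating family then proves the theorem.

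The main obstacle I anticipate is verifying the compact-generation claim in the spectral Noetherian setting, namely that the Koszul-regular pushforwards $i_{*}\mathcal{F}'$ genuinely generate $\Gamma_{\Lambda}(\QC{X})$, not merely Zariski-locally. If a single global finite-tor-amplitude $Z$ with $|Z| = \Lambda$ is unavailable, one should argue Zariski- or étale-locally on $X$ and patch using the locality of $\Gamma_{\Lambda}$, the locality of $p_X^{!}$ (Proposition \ref{openshriek}), and the descent results of Propositions \ref{quasicoherent} and \ref{qcoh2}; compatibility of the generators under restriction is then what makes the local-to-global step go through.
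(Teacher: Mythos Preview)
Your reduction to Theorem~\ref{crossshriek2} via generators is not the paper's route, and the local-to-global step you flag is a genuine gap rather than a routine patching. The paper does not test against objects of the form $i_*\mathcal{F}'$ at all; instead it reruns the \emph{proof} of Theorem~\ref{crossshriek2} at the level of categories, replacing the scheme $Y$ by the category $\Gamma_{\Lambda}(\QC{X})$, the functor $g_*$ by the inclusion $i_{\Lambda}$, and $g^{\times}$ by $\Gamma_{\Lambda}$. The only scheme-theoretic input needed is that $p_{X,*}i_{\Lambda}:\Gamma_{\Lambda}(\QC{X})\to\QC{S}$ preserves compact objects (argued exactly as in Theorem~\ref{AGinput1}: $i_{\Lambda}$ preserves compacts, and perfect complexes with support proper over $S$ push forward to almost perfect, hence perfect, complexes by SAG~5.6.5.2 and finite tor-amplitude). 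With that in hand, every base-change step in the proof of Theorem~\ref{crossshriek2} is an instance of the abstract Theorem~\ref{BC2}, and no closed subscheme $Z$ ever enters.

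Your approach works cleanly when $X$ is affine (a single Koszul complex generates) or when $X$ is smooth (any closed immersion into $X$ is finite tor-amplitude, so the reduced subscheme on $\Lambda$ suffices). The gap is in the general global case. The affine-local Koszul complexes, once extended to $X$ by $j_!$ along the open inclusion, are indeed compact generators of $\Gamma_{\Lambda}(\QC{X})$, but they are no longer pushforwards along a \emph{closed} immersion into $X$, so Theorem~\ref{crossshriek2} as stated does not apply to them. And your proposed patching via locality of $p_X^!$, $\Gamma_{\Lambda}$, and Propositions~\ref{quasicoherent}--\ref{qcoh2} cannot rescue this, because the other side of the map involves $p_X^{\times}$, which is \emph{not} local on $X$; restricting to an open $U$ does not turn $\Gamma_{\Lambda}p_X^{\times}$ into $\Gamma_{\Lambda\cap U}p_U^{\times}$. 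That non-locality is precisely the phenomenon the theorem is about, so it cannot be assumed away in the descent step. The clean fix is exactly the paper's observation: the argument of Theorem~\ref{crossshriek2} only ever uses that $(p_X g)_*$ preserves compacts, so one may substitute $i_{\Lambda}$ for $g_*$ directly.
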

\begin{proof}
  Repeat the argument used to prove Theorem \ref{crossshriek2}, rephrased in terms of categories of quasicoherent sheaves and then substitute $\Gamma_{\Lambda}(\QC{X})$
  wherever $\QC{Y}$ appears, using the fact that
  $p_{X,*}i_{\Lambda}$ preserves compact objects. This is because
  $i_{\Lambda}$ preserves compacts and $p_{X,*}$ is finite tor-amplitude and sends perfect objects
  supported on $\Lambda$ to almost perfect objects (see SAG 5.6.5.2).
\end{proof}

\begin{corollary} \label{crossshriek3}
  The map 
  \[\Hom(p_{X,*}\mathcal{F},\mathcal{G}) \to \Hom(\mathcal{F},p_X^!\mathcal{G})\]
  in Theorem \ref{crossshriek} is an isomorphism if $\mathcal{F}$ is supported on a proper (over $S$)
  subscheme.
\end{corollary}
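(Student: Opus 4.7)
The plan is to reduce the claim to Theorem \ref{crossshriek4}, by exploiting the support condition on $\mathcal{F}$. By the construction of Proposition \ref{crossshriek}, the map in question factors as
\[\Hom(p_{X,*}\mathcal{F}, \mathcal{G}) \xrightarrow{\sim} \Hom(\mathcal{F}, p_X^{\times}\mathcal{G}) \to \Hom(\mathcal{F}, p_X^!\mathcal{G}),\]
where the first arrow is the $(p_{X,*}, p_X^{\times})$-adjunction and the second is induced by the natural transformation $p_X^{\times} \to p_X^!$ of Proposition \ref{crossshriek}. So it suffices to show the second map is an isomorphism under the stated support hypothesis, say that $\mathcal{F}$ is supported on a closed $\Lambda \subseteq |X|$ which is proper over $S$.

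I would handle this in two steps. First, since $i_\Lambda \colon \Gamma_{\Lambda}(\QC{X}) \to \QC{X}$ is fully faithful with right adjoint $\Gamma_\Lambda$, and $\mathcal{F}$ lies in its essential image, the adjunction gives, for any $\mathcal{H} \in \QC{X}$, an isomorphism
\[\Hom(\mathcal{F}, \mathcal{H}) \cong \Hom(\mathcal{F}, i_\Lambda\Gamma_\Lambda \mathcal{H}),\]
natural in $\mathcal{H}$. Thus the functor $\Hom(\mathcal{F}, -)$ depends on its argument only through $\Gamma_\Lambda$. Second, Theorem \ref{crossshriek4} says exactly that the natural transformation $p_X^{\times} \to p_X^!$ becomes an isomorphism after applying $\Gamma_\Lambda$. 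Combining the two, the induced map $\Hom(\mathcal{F}, p_X^{\times}\mathcal{G}) \to \Hom(\mathcal{F}, p_X^!\mathcal{G})$ is an isomorphism, which gives the corollary.

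There is no real obstacle here: the result is a formal repackaging of Theorem \ref{crossshriek4} through the standard $(i_\Lambda, \Gamma_\Lambda)$ adjunction. The only minor bookkeeping point is to verify that the map of Proposition \ref{crossshriek} is genuinely the composition displayed above, but this is immediate from the way it was constructed there (apply $p_{X,*}$--$p_X^{\times}$ adjunction, then post-compose with $p_X^{\times} \to p_X^!$).
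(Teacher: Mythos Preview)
Your argument is correct and is precisely the intended deduction: the paper states Corollary \ref{crossshriek3} immediately after Theorem \ref{crossshriek4} with no explicit proof, and the passage from one to the other is exactly the $(i_\Lambda,\Gamma_\Lambda)$-adjunction step you wrote out. There is nothing to add.
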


Now we can prove the following alternative description of the $D_{X/S}$, also known as the Grothendieck-Sato formula.
\begin{corollary} \label{gsfor}
  \[D_{X/S} \cong \Gamma_{\Delta}(\pi_2^{*}(\omega_X)) \cong \Gamma_{\Delta}(\mathcal{O}_X \boxtimes \omega_X)\]
\end{corollary}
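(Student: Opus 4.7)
The plan is to combine two results already at our disposal: Theorem \ref{crossshriek4}, which lets us trade $\pi_1^{\times}$ for $\pi_1^!$ after passing to diagonally supported sheaves, and Theorem \ref{shriekbase}, the base-change formula for upper shriek, which identifies $\pi_1^!$ applied to structure sheaves with $\pi_2^*\omega_X$.

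First I would apply Theorem \ref{crossshriek4} to the map $\pi_1 : X \times_S X \to X$ (which is finite tor-amplitude, locally almost of finite presentation, and separated by base change from $p_X$) with the closed subset $\Lambda = \Delta \subset |X \times_S X|$. Since $\pi_1$ restricts to an isomorphism $\Delta \xrightarrow{\sim} X$, the diagonal is proper (in fact a closed immersion composed with an isomorphism) over $X$, so the hypothesis is satisfied. The theorem then yields a natural isomorphism
\[\Gamma_{\Delta}\pi_1^{\times} \cong \Gamma_{\Delta}\pi_1^!\]
of functors $\QC{X} \to \Gamma_{\Delta}(\QC{X \times_S X})$. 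Applying this to $\mathcal{O}_X$ gives $D_{X/S} \cong \Gamma_{\Delta}\pi_1^!\mathcal{O}_X$.

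Next, I would apply Theorem \ref{shriekbase} to the cartesian square in diagram (\ref{eq1}) (taking $Y = X$, $S' = X$, $p_Y = p_{S'} = p_X$, so $Y_{S'} = X \times_S X$). The hypotheses are satisfied by assumption on $p_X$, and the conclusion reads
\[\pi_1^!\, p_X^* \cong \pi_2^*\, p_X^!.\]
Plugging in $\mathcal{O}_S$ and using $p_X^*\mathcal{O}_S \cong \mathcal{O}_X$ and $p_X^!\mathcal{O}_S \cong \omega_{X/S}$ yields $\pi_1^!\mathcal{O}_X \cong \pi_2^*\omega_X$. Combining with the previous step,
\[D_{X/S} \cong \Gamma_{\Delta}\pi_1^!\mathcal{O}_X \cong \Gamma_{\Delta}(\pi_2^*\omega_X),\]
which is the first claimed isomorphism.

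For the second isomorphism, by definition $\mathcal{O}_X \boxtimes \omega_X = \pi_1^*\mathcal{O}_X \otimes \pi_2^*\omega_X$, and since $\pi_1^*\mathcal{O}_X \cong \mathcal{O}_{X \times_S X}$, this reduces tautologically to $\pi_2^*\omega_X$, so the two expressions denote the same object. There is no real obstacle here; the only care required is verifying that the hypotheses of Theorem \ref{crossshriek4} and Theorem \ref{shriekbase} apply to the base-changed map $\pi_1$, which follows immediately from stability of our standing hypotheses (finite tor-amplitude, locally almost of finite presentation, separated, homologically bounded) under base change along $p_X$.
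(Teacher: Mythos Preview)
Your proof is correct and follows essentially the same route as the paper: first invoke Theorem \ref{crossshriek4} (applied to $\pi_1$ with $\Lambda=\Delta$) to pass from $\Gamma_{\Delta}\pi_1^{\times}$ to $\Gamma_{\Delta}\pi_1^!$, then use Theorem \ref{shriekbase} to identify $\pi_1^!\mathcal{O}_X$ with $\pi_2^*\omega_X$. Your write-up is simply more explicit than the paper's two-line proof about verifying hypotheses and unwinding the $\boxtimes$ notation.
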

\begin{proof}
  By the theorem above we have the isomorphism
  \[\Gamma_{\Delta}\pi_1^{\times}(\mathcal{O}_X) \cong \Gamma_{\Delta}\pi_1^!(\mathcal{O}_X)\]
  By base-change for upper shriek (Theorem \ref{shriekbase}) we have the desired result.
\end{proof}

We conclude by showing that upper shriek composes well.
\begin{theorem} \label{csk3}
  Suppose $g: Y \to X$ is finite tor amplitude (as well as the standing
  assumptions). Then
  \[g^!p_X^! \cong p_Y^!\]
\end{theorem}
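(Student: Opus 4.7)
The plan is to unfold both sides of $g^!p_X^! \cong p_Y^!$ using the definition $f^! = \tilde{\delta}^*\tilde{\pi}_1^{\times}f^*$ and then prove the resulting identity by a base-change diagram chase. The key geometric observation is that the $S$-diagonal $\delta_{Y/S}: Y \to Y \times_S Y$ factors through the $X$-diagonal as $\delta_{Y/S} = h \circ \delta_{Y/X}$, where $h: Y \times_X Y \hookrightarrow Y \times_S Y$ is the natural closed immersion realizing $Y \times_X Y = (Y \times_S Y) \times_{X \times_S X} X$ as a pullback along $\delta_X: X \to X \times_S X$ and $g \times g$.

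First, I would unfold both sides. Using the factorization of the diagonal,
\[p_Y^! = \tilde{\delta}_{Y/S}^*\tilde{\pi}_{1,Y/S}^{\times}g^*p_X^* \cong \tilde{\delta}_{Y/X}^*\tilde{h}^*\tilde{\pi}_{1,Y/S}^{\times}g^*p_X^*,\]
while by definition
\[g^!p_X^! = \tilde{\delta}_{Y/X}^*\tilde{\pi}_{1,Y/X}^{\times}g^*\tilde{\delta}_X^*\tilde{\pi}_{1,X/S}^{\times}p_X^*.\]
Thus the theorem reduces to establishing the natural isomorphism of functors $\QC{X} \to \Gamma_{\Delta}(\QC{Y \times_X Y})$:
\[\tilde{h}^*\tilde{\pi}_{1,Y/S}^{\times}g^* \cong \tilde{\pi}_{1,Y/X}^{\times}g^*\tilde{\delta}_X^*\tilde{\pi}_{1,X/S}^{\times}.\]

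Next, I would establish this identity by applying Theorem \ref{BC2} (pull-cross base change) to the diagram of pullback squares
\[\begin{tikzcd}
Y \times_X Y \arrow[r, "h"] \arrow[d] & Y \times_S Y \arrow[d, "g \times g"] \arrow[r, "\pi_{1,Y/S}"] & Y \arrow[d, "g"] \\
X \arrow[r, "\delta_X"] & X \times_S X \arrow[r, "\pi_{1,X/S}"] & X
\end{tikzcd}\]
The right square yields a comparison between $\tilde{\pi}_{1,Y/S}^{\times}g^*$ and $(g \times g)^*\tilde{\pi}_{1,X/S}^{\times}$ on diagonally-supported sheaves. The left square, together with the geometric identity $(g \times g) \circ h = \delta_X \circ g \circ \pi_{1,Y/X}$, yields $\tilde{h}^*(g \times g)^* \cong \pi_{1,Y/X}^*g^*\tilde{\delta}_X^*$. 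Combining these with the $\QC{Y}$-linearity of $\tilde{\pi}_{1,Y/X}^{\times}$ (Corollary \ref{AGcoro1}), which allows one to move $g^*\tilde{\delta}_X^*(\cdots)$ outside $\tilde{\pi}_{1,Y/X}^{\times}$, produces the required identity.

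The main obstacle is rigorously applying pull-cross base change in the $\Gamma_\Delta$-setting: for the raw functor $\pi_1^\times$, upper-cross base change would fail for the non-proper map $g$, but its $\Gamma_\Delta$-version $\tilde{\pi}_1^\times$ is colimit-preserving by Theorem \ref{AGinput1}, so that $\tilde{\pi}_{1,*}$ is effectively a proper pushforward. The careful bookkeeping between $\QC{\cdot \times_S \cdot}$, $\QC{\cdot \times_X \cdot}$, and their $\Gamma_\Delta$-subcategories---in particular that $h^*$ preserves the diagonal support, and that $(g \times g)^*$ restricts outputs of $\tilde{\pi}_{1,X/S}^\times$ to sheaves supported on $Y \times_X Y$ which one further projects onto $\Gamma_\Delta$---is the delicate part, but once handled, the desired composition law $g^!p_X^! \cong p_Y^!$ follows by substituting and reassembling $\tilde{\delta}_{Y/X}^*$ and $p_X^*$.
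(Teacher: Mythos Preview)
Your factorization $\delta_{Y/S} = h \circ \delta_{Y/X}$ and the resulting reduction are correct, but the diagram chase that follows has a genuine gap. The right square in your diagram
\[
\begin{tikzcd}
Y \times_S Y \arrow[r,"\pi_{1,Y/S}"] \arrow[d,"g\times g"'] & Y \arrow[d,"g"] \\
X \times_S X \arrow[r,"\pi_{1,X/S}"] & X
\end{tikzcd}
\]
is \emph{not} Cartesian: the fibre product of $\pi_{1,X/S}$ along $g$ is $Y\times_S X$, not $Y\times_S Y$. Equivalently, in the language of Theorem~\ref{BC2}, the tensor product $\Gamma_{\Delta_X}(\QC{X\times_S X})\otimes_{\QC{X}}\QC{Y}$ (with $\QC{X}$ acting through $\tilde\pi_1^*$) is $\Gamma_{\Gamma_g}(\QC{Y\times_S X})$, \emph{not} $\Gamma_{\Delta_Y}(\QC{Y\times_S Y})$; the identification you implicitly use is the content of Proposition~\ref{quasicoherent}, which is only valid for \'etale $g$. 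Hence the ``comparison'' you invoke between $\tilde\pi_{1,Y/S}^{\times}g^*$ and $(g\times g)^*\tilde\pi_{1,X/S}^{\times}$ is not available, and your final appeal to $\QC{Y}$-linearity of $\tilde\pi_{1,Y/X}^{\times}$ does not convert $\pi_{1,Y/X}^*$ into $\tilde\pi_{1,Y/X}^{\times}$ (these differ by tensoring with $D_{Y/X}$).

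The paper's proof confronts exactly this point by passing through the correct intermediate space $X\times_S Y$. One first factors $\tilde\pi_2^{(Y\times_S Y),\times}$ through $\widetilde{(g\times\id)}^{\times}$ and $\tilde\pi_2^{(X\times_S Y),\times}$ landing in $\Gamma_{Y}(\QC{X\times_S Y})$ (the graph of $g$). The two essential ingredients you are missing then enter: Theorem~\ref{crossshriek4} is used to replace $\tilde\pi_2^{(X\times_S Y),\times}$ by $\tilde\pi_2^{(X\times_S Y),!}$ on this proper-support subcategory, and Theorem~\ref{shriekbase} (base change for $(-)^!$) is applied to the square with vertices $X\times_S Y,\,Y,\,X,\,S$ to rewrite this as $\tilde\pi_1^{(X\times_S Y),*}p_X^!$. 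Only after this does a genuine pull--cross base change (for the Cartesian square with $Y\times_X Y$, $Y\times_S Y$, $Y$, $X\times_S Y$) finish the argument. Your outline recovers the shape of the proof once the right square is replaced by the honest pullback square over $X\times_S Y$, but the steps through Theorems~\ref{crossshriek4} and~\ref{shriekbase} are not optional and cannot be absorbed into a single application of Theorem~\ref{BC2}.
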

\begin{proof}
  Consider the diagram
  \[
    \begin{tikzcd}
      X \times_S Y  \arrow{r}{\pi_2} \arrow{dd}{\pi_1} & Y \arrow{dd}{p_Y} \\ \\
      X \arrow{r}{p_X} & S\\
    \end{tikzcd}
  \]
  We have
  \begin{equation*}
    \begin{split}
      p_Y^!
      &\cong \tilde{\delta}_Y^*\tilde{\pi}_2^{(Y \times Y),\times}p_Y^*\\
      &\cong \tilde{\delta}_Y^*\widetilde{(g \times \id)}^{\times}\tilde{\pi}_2^{(X \times Y),\times}p_Y^*\\
      &\cong \tilde{\delta}_Y^*\widetilde{(g \times \id)}^{\times}\tilde{\pi}_2^{(X \times Y),!}p_Y^*\\
      &\cong \tilde{\delta}_Y^*\widetilde{(g \times \id)}^{\times}\tilde{\pi}_1^{(X \times Y),*}p_X^!\\
    \end{split}
  \end{equation*}
  where $\tilde{\pi}_2^{(X \times Y),!} := \Gamma_Y\pi_2^{(X \times Y),!}$ and similarly for the $\tilde{\pi}_2^{(X \times Y),\times}$. The last isomorphism follows from Theorem \ref{shriekbase}.
  Now look at the Cartesian diagram
  \[
    \begin{tikzcd}
      Y \times_X Y  \arrow{rr}{\phi = \id \times_{p_X} \id} \arrow{dd}{\pi_2} && Y \times_S Y \arrow{dd}{g \times \id} \\ \\
      Y \arrow{rr}{\delta_g} && X \times_S Y \\
    \end{tikzcd}
  \]
  So,
  \begin{equation*}
    \begin{split}
      p_Y^!
      &\cong \tilde{\delta}_Y^*\widetilde{(g \times \id)}^{\times}\tilde{\pi}_1^{(X \times Y),*}p_X^!\\
      &\cong \tilde{\delta}_Y^*\phi^*\widetilde{(g \times \id)}^{\times}\tilde{\pi}_1^{(X \times Y),*}p_X^!\\
      &\cong \tilde{\delta}_Y^*\tilde{\pi}_2^{(Y \times_X Y),\times} \delta_g^*\tilde{\pi}_1^{(X \times Y),*}p_X^!\\
      &\cong \tilde{\delta}_Y^*\tilde{\pi}_2^{(Y \times_X Y),\times} g^*p_X^!\\
      &\cong g^!p_X^!\\
    \end{split}
  \end{equation*}
\end{proof}

\begin{remark}
  The statements of this sections indicates that Grothendieck duality, in the sense of constructing an upper shriek
  functor satisfies section 2 of \cite{N1}, can be developed from scratch using Definition \ref{defshriek}
  by making ample use of the category $\Gamma_{\Delta}(\QC{X \times X})$. In the section on smooth varieties, we
  follow Neeman and show that we can easily identify $\omega_X$ with the sheaf of top differential forms (shifted
  appropriatedly) in the smooth case.
\end{remark}

\begin{remark}
  Bhargav Bhatt pointed out that the upper shriek functor is also characterized (up to isomorphism) by the following properties. 

  \begin{enumerate}
  \item There is a map $p_X^{\times} \to p_X^!$ such that the induced map
    \[\Hom(p_{X,*}\mathcal{F},\mathcal{G}) \to \Hom(\mathcal{F},p_X^!\mathcal{G})\] is an isomorphism when $\mathcal{F}$
    has proper support over $S$ (this is the second half of Corollary \ref{crossshriek3}).
  \item Theorem \ref{shriekbase} holds.
  \end{enumerate}

  This observation can be deduced from the following diagram
  \begin{equation*}
    \begin{tikzcd}
        X \arrow{dr}{\delta} \\
        & X \times_S X  \arrow{r}{\pi_2} \arrow{dd}{\pi_1} & X \arrow{dd}{p_X} \\ \\
        & X \arrow{r}{p_X} & S\\
    \end{tikzcd}
    \end{equation*}
  Condition (2) implies that
  \[p_X^! \cong \delta^*\pi_2^*p_X^! \cong \delta^*\pi_1^!p_X^*\]
  Now condition (1) implies
  \[\id \cong \delta^{\times}\pi_1^{\times} \to \delta^{\times}\pi_1^!\]
  is an isomorphism. Hence
  \[\delta^*\pi_1^!p_X^* \cong \delta^*\pi_1^{\times}p_X^*\]
  using Lemma \ref{conserve}.
\end{remark}

\begin{remark}
  We work with usual underived schemes in this remark. Suresh Nayak pointed out to us that it is possible to define the upper shriek functor along arbitary maps
  of finitely presented separated schemes which are finite tor-amplitude over a Noetherian base by factoring such a map
  \[f : X \to Y\]
  as the composition of the graph of $f$
  \[\Gamma_f : X \to X \times Y\]
  composed with the projection map
  \[\pi_Y : X \times Y \to Y\]
  Then, we can define $f^!$ by the composition $\Gamma_f^{\times}\pi_Y^!$
  where $\pi_Y$ is finite tor-amplitude and hence we can define upper shriek along it
  using the techniques in this paper. However, for such a definition to be compatible with compositions,
  we must restrict to the subcategory $D^+_{qc}$ of objects with bounded below cohomology. However,
  we do not currently know how to adapt the category-theoretic proofs in this paper to this setting.
\end{remark}

\section{The Two Dualities}
In this section, we introduce two categorical dualities which are central to our paper. The first duality
interchanges a dualizable object in a symmetric monoidal category with its dual, which we refer to as up-down duality. The second
duality interchanges a dualizable category with its dual (inside $\Mod{\mathscr{V}}^L$ for some $\mathscr{V}$), which we refer to as left-right duality.
Up-down duality allows us to conjugate compact object preserving functors with taking duals of compact objects 
to obtain new functors (when compact objects coincide with dualizable objects). Left-right duality produces from a colimit-preserving functor between dualizable categories
a functor in the opposite direction on their duals. We will often be in a situation where our categories are in fact self-dual, where left-right duality
produces simply a functor in the reverse direction. For up-down duality, we quote extensively from \cite{BAS}.

Let us start with up-down duality. For a compactly generated presentable stable category $\mathscr{X}$, we denote by
$\mathscr{X}^c$ the stable subcategory of compact objects. Similarly, if $f$ is a colimit
preserving functor between compactly generated presentable stable categories which preserves compact objects,
we let $f^c$ be the functor restricted to compact objects.
Suppose $f : \mathscr{X} \to \mathscr{Y}$ is a map of compactly generated stable categories with a anti-automorphism on the compact
objects which preserves compact objects (for us this will always just be taking the dual of the object in a symmetric monoidal
category, which we can do because our compact objects are dualizable).
Then, we can conjugate the functor $f^c$ by the anti-automorphism to get a functor
\[(f^c)^D : \opposite{(\mathscr{X}^c)} \to \opposite{(\mathscr{Y}^c)}\]
By viewing $(f^c)^D$ as a functor from $\mathscr{X}^c$ to $\mathscr{Y}^c$, we can extend it uniquely to a colimit preserving functor
\[f^D : \mathscr{X} \to \mathscr{Y}\]

We record two lemmas paraphrased from \cite{BAS} (Lemma 2.6 in loc. cit)
\begin{lemma}
  Suppose $f : \mathscr{X} \to \mathscr{Y} $ is a colimit-preserving functor of compactly generated presentable stable
  categories which preserves compact objects. Then, $f^c : \mathscr{X}^c \to \mathscr{Y}^c$ has a right
  adjoint if and only if the right adjoint of $f$ preserves compact objects. In which case the right adjoint
  of $f$ is induced by the right adjoint of $f^c$.
\end{lemma}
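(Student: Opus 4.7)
The proof splits into two directions, and the real work is essentially bookkeeping with the compact generation hypothesis: since $f$ is a colimit-preserving functor between presentable stable $\infty$-categories, the adjoint functor theorem already guarantees that a right adjoint $f^R$ exists, so the lemma is really about how $f^R$ interacts with the subcategories of compact objects.

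For the backward direction, suppose $f^R$ preserves compact objects, so it restricts to a functor $(f^R)^c : \mathscr{Y}^c \to \mathscr{X}^c$. I would simply unwind the adjunction $f \dashv f^R$ on compact pairs: for $x \in \mathscr{X}^c$ and $y \in \mathscr{Y}^c$, the full faithfulness of $\mathscr{X}^c \hookrightarrow \mathscr{X}$ and $\mathscr{Y}^c \hookrightarrow \mathscr{Y}$ yields the chain of equivalences
\[\Hom_{\mathscr{X}^c}(x,(f^R)^c(y)) \simeq \Hom_{\mathscr{X}}(x,f^R(y)) \simeq \Hom_{\mathscr{Y}}(f(x),y) \simeq \Hom_{\mathscr{Y}^c}(f^c(x),y),\]
natural in both variables, which realizes $(f^R)^c$ as a right adjoint to $f^c$.

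For the forward direction, let $g : \mathscr{Y}^c \to \mathscr{X}^c$ be a right adjoint of $f^c$, and let $G : \mathscr{Y} \to \mathscr{X}$ be its colimit-preserving extension (using compact generation of $\mathscr{Y}$). For $x \in \mathscr{X}^c$ and $y \in \mathscr{Y}^c$, a parallel computation gives
\[\Hom_{\mathscr{X}}(x, G(y)) \simeq \Hom_{\mathscr{X}^c}(x, g(y)) \simeq \Hom_{\mathscr{Y}^c}(f^c(x), y) \simeq \Hom_{\mathscr{Y}}(f(x), y) \simeq \Hom_{\mathscr{X}}(x, f^R(y)).\]
Naturality in $x \in \mathscr{X}^c$ together with the fact that $\mathscr{X}^c$ generates $\mathscr{X}$ under colimits forces $f^R(y) \simeq G(y) = g(y)$ for compact $y$; in particular $f^R(y) \in \mathscr{X}^c$, so $f^R$ preserves compact objects and its restriction to $\mathscr{Y}^c$ recovers $g$.

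I anticipate no serious obstacle beyond a point of interpretation for the final clause: since $f^R$ need not preserve general colimits, the assertion that $f^R$ is ``induced'' by the right adjoint of $f^c$ should not be misread as $f^R = G$ on all of $\mathscr{Y}$ (which would require $f^R$ to be colimit-preserving). Rather, the content of that clause is exactly what the Hom computation above extracts, namely that $(f^R)^c \simeq (f^c)^R$, so $f^R$ is determined on the compact generators by taking the right adjoint of $f^c$.
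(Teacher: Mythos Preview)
Your proof is correct. The paper does not actually prove this lemma; it simply records it as a paraphrase of \cite{BAS}, Lemma~2.6, so you have supplied a direct argument where the paper defers to the literature.

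One small refinement to your final caveat: under the hypotheses of the lemma, $f$ preserves compact objects, and this already forces $f^R$ to preserve \emph{filtered} colimits. Indeed, for compact $x$ and a filtered system $(y_i)$,
\[
\Hom(x, f^R(\colim_i y_i)) \simeq \Hom(f(x), \colim_i y_i) \simeq \colim_i \Hom(f(x), y_i) \simeq \Hom(x, \colim_i f^R(y_i)),
\]
using compactness of $f(x)$ in the middle step. Since $\mathscr{Y} \simeq \mathrm{Ind}(\mathscr{Y}^c)$, this means $f^R$ genuinely agrees with the Ind-extension $G$ of $g=(f^c)^R$ on all of $\mathscr{Y}$, not merely on compact objects. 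So the stronger reading of ``induced'' is in fact valid here, and your caution, while reasonable in general, is unnecessary in this specific setting.
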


\begin{lemma}
  Suppose $f : \mathscr{X} \to \mathscr{Y} $ is a colimit-preserving functor of compactly generated presentable stable
  categories which preserves compact objects. Then, $f^c : \mathscr{X}^c \to \mathscr{Y}^c$ has a left 
  adjoint if and only if $f$ has a left adjoint. In which case the left adjoint of $f$ is induced by
  the left adjoint of $f^c$.
\end{lemma}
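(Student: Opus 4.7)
The plan is to prove the two implications separately, with the bridge in both directions being the interplay between compactness and filtered colimits.

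For the direction $(\Leftarrow)$, suppose $g : \mathscr{Y} \to \mathscr{X}$ is a left adjoint of $f$. I would show that $g$ preserves compact objects, so that $g^c := g|_{\mathscr{Y}^c}$ lands in $\mathscr{X}^c$ and is left adjoint to $f^c$ by restricting the ambient adjunction. Since $f$ is colimit-preserving, it preserves filtered colimits in particular, and thus for any compact $y \in \mathscr{Y}^c$ and any filtered system $\{x_i\}$ in $\mathscr{X}$,
\[\Hom_{\mathscr{X}}(gy, \colim_i x_i) \cong \Hom_{\mathscr{Y}}(y, \colim_i f x_i) \cong \colim_i \Hom_{\mathscr{Y}}(y, f x_i) \cong \colim_i \Hom_{\mathscr{X}}(gy, x_i),\]
so $gy$ is compact.

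For the direction $(\Rightarrow)$, suppose $g^c : \mathscr{Y}^c \to \mathscr{X}^c$ is left adjoint to $f^c$. I would first extend $g^c$ uniquely to a colimit-preserving functor $g : \mathscr{Y} \to \mathscr{X}$ via the equivalence $\mathscr{Y} \simeq \mathrm{Ind}(\mathscr{Y}^c)$ together with the cocompleteness of $\mathscr{X}$. By construction $g$ restricts to $g^c$ on $\mathscr{Y}^c$, and in particular preserves compact objects. To verify $g \dashv f$, I would proceed in two steps. First, for compact $y$ and arbitrary $x = \colim_i x_i$ written as a filtered colimit of compact objects, the compactness of both $y$ and $gy$, combined with the adjunction $g^c \dashv f^c$, gives
\[\Hom(gy, x) \cong \colim_i \Hom(gy, x_i) \cong \colim_i \Hom(y, f x_i) \cong \Hom(y, f x).\]
Second, for arbitrary $y = \colim_j y_j$ with $y_j$ compact, colimit-preservation of $g$ gives
\[\Hom(gy, x) \cong \lim_j \Hom(gy_j, x) \cong \lim_j \Hom(y_j, f x) \cong \Hom(y, f x),\]
completing the adjunction verification.

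The main obstacle is essentially bookkeeping around $\mathscr{Y} \simeq \mathrm{Ind}(\mathscr{Y}^c)$: one needs this equivalence both to extend $g^c$ uniquely to a colimit-preserving functor and to resolve arbitrary objects by compact ones when bootstrapping the adjunction. Once this setup is in hand, the proof reduces cleanly to the formal interplay between compactness and filtered colimits, in close parallel with the preceding right-adjoint lemma.
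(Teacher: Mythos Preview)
Your argument is correct and is the standard proof of this fact: in $(\Leftarrow)$ you correctly use that $f$ preserves filtered colimits to show any left adjoint of $f$ sends compacts to compacts, and in $(\Rightarrow)$ the $\mathrm{Ind}$-extension of $g^c$ together with the two-step bootstrap (first compact $y$, then arbitrary $y$) is exactly how one verifies the extended adjunction. The paper does not supply its own proof of this lemma; it simply records it as a paraphrase of \cite{BAS}, Lemma~2.6, so there is nothing further to compare.
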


We also record the following proposition from \cite{BAS}
\begin{proposition}
  Suppose $f : \mathscr{X} \to \mathscr{Y} $ is a colimit-preserving functor of compactly generated presentable stable
  categories (with anti-automorphisms as above) which preserves compact objects and such that $f^D \cong f$. Let $g$ be the right adjoint of $f$.
  Then, $f$ preserves limits if and only if $g$ preserves compact objects.
\end{proposition}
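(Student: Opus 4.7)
The plan is to bootstrap from the two preceding lemmas using the fact that conjugation by the anti-automorphism $D$ exchanges left and right adjoints. First I would pass from the hypothesis $f^D \cong f$ to an isomorphism $f^c \cong (f^c)^D$ on compact objects. This is legitimate because, by its construction, $f^D$ is the unique colimit-preserving extension of a functor $\mathscr{X}^c \to \mathscr{Y}^c$ and hence automatically preserves compacts, so the isomorphism $f^D \cong f$ of colimit-preserving functors is pinned down by its restriction to the subcategories of compact objects.

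The next step is the adjoint-swap. By definition $(f^c)^D$ is built from $f^c$ by conjugation with the anti-equivalences $\mathscr{X}^c \simeq \opposite{(\mathscr{X}^c)}$ and $\mathscr{Y}^c \simeq \opposite{(\mathscr{Y}^c)}$ provided by $D$, or equivalently is $\opposite{(f^c)}$ post- and pre-composed with these equivalences. Since equivalences preserve the property of having a left or right adjoint, $(f^c)^D$ admits a left (respectively right) adjoint if and only if $\opposite{(f^c)}$ does; and passing to opposite categories exchanges the two notions of adjoint, so this is in turn equivalent to $f^c$ admitting a right (respectively left) adjoint. Combined with the isomorphism $f^c \cong (f^c)^D$, this forces the existence of a left adjoint of $f^c$ to be equivalent to the existence of a right adjoint of $f^c$.

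Finally, I would feed this back into the two lemmas. Existence of a left adjoint for $f^c$ is, by the second lemma above, equivalent to existence of a left adjoint for $f$, which in the compactly generated presentable stable setting is equivalent (by the adjoint functor theorem) to $f$ preserving limits. Existence of a right adjoint for $f^c$ is, by the first lemma above, equivalent to the right adjoint $g$ of $f$ preserving compact objects. Chaining these equivalences with the conclusion of the previous paragraph gives the proposition.

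The step most prone to confusion is the adjoint-swap paragraph, where one has to carefully track variance conventions and keep straight the distinction between viewing $(f^c)^D$ as a functor $\opposite{(\mathscr{X}^c)} \to \opposite{(\mathscr{Y}^c)}$ and as a functor $\mathscr{X}^c \to \mathscr{Y}^c$ via the identification by $D$; everything else reduces to formal adjunction manipulations and the two lemmas already recorded.
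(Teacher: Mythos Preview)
Your proposal is correct and follows essentially the same route as the paper: both arguments chain the equivalences ``$f$ preserves limits $\Leftrightarrow$ $f$ has a left adjoint $\Leftrightarrow$ $f^c$ has a left adjoint $\Leftrightarrow$ $f^c$ has a right adjoint $\Leftrightarrow$ $g$ preserves compacts,'' with the middle equivalence coming from the self-duality $f^c \cong (f^c)^D$ and the outer ones from the two recorded lemmas and the adjoint functor theorem. The paper compresses your adjoint-swap paragraph into the single phrase ``because it is invariant under duality,'' but the content is identical.
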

\begin{proof}
  We know that $f$ preserves limits if and only if $f$ has a left adjoint.
  By the second lemma above, $f$ has a left adjoint if and only if $f^c$ has a left adjoint.
  Now, $f^{c}$ has a left adjoint if and only if it has a right adjoint because it is invariant under duality.
  Finally, $f^{c}$ has a right adjoint if and only if $g$ preserves compact objects by the first lemma above.
\end{proof}

As a consequence, we have the following lemmas, which can be proven directly.
\begin{lemma}
  Suppose $f : \mathscr{X} \to \mathscr{Y}$ is a map of compactly generated stable categories with a anti-automorphism on the compact
  objects which preserves compact objects.
  Let $g$ be the right adjoint of $f$ and suppose $g$ preserves compact objects.
  Then, $g^D$ is the left adjoint of $f^D$.
\end{lemma}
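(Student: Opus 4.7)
The strategy is to transport the adjunction $f \dashv g$ through the construction $(-)^D$ using the two lemmas already recorded in this section. The starting point is straightforward: since $g$ is the right adjoint of $f$ and preserves compact objects by hypothesis, the first lemma immediately yields an adjunction $f^c \dashv g^c$ of functors between the compact subcategories.

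Next, the heart of the argument is to show that applying $(-)^D$ to this adjunction produces the \emph{reversed} adjunction $g^{D,c} \dashv f^{D,c}$ at the level of compact objects. Unpacking the definition, the construction of $f^D$ from $f^c$ consists of two operations: (a) conjugation by the anti-equivalence $D$, producing $(f^c)^D \colon \opposite{(\mathscr{X}^c)} \to \opposite{(\mathscr{Y}^c)}$ and similarly $(g^c)^D$ in the opposite direction; and (b) reinterpreting these functors between opposite categories as functors $\mathscr{X}^c \to \mathscr{Y}^c$ and $\mathscr{Y}^c \to \mathscr{X}^c$ (before extending by colimits). Step (a), viewed as conjugation by the equivalences $\mathscr{X}^c \simeq \opposite{(\mathscr{X}^c)}$ and $\mathscr{Y}^c \simeq \opposite{(\mathscr{Y}^c)}$, preserves the adjoint pair structure since conjugation by equivalences is $2$-functorial. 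Step (b) amounts to taking opposites of functors, and taking opposites of categories swaps left and right adjoints. Composing the two steps therefore inverts the direction of the adjunction, giving $g^{D,c} \dashv f^{D,c}$.

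Finally, we conclude by applying the second lemma of the section with $f^D$ in place of $f$: since the compact-level functor $f^{D,c}$ admits a left adjoint (namely $g^{D,c}$, as just established), the functor $f^D$ itself admits a left adjoint, and that left adjoint is the unique colimit-preserving extension of $g^{D,c}$, which is $g^D$ by definition. Hence $g^D$ is left adjoint to $f^D$, as required. The main obstacle is the bookkeeping in step (b): one has to verify carefully that conjugation by $D$ produces a functor between opposite categories, and that the passage back to the original categories (via taking opposites) is precisely what inverts the adjunction direction---the resulting cancellation of one contravariance against the other is what makes the statement come out as $g^D \dashv f^D$ rather than $f^D \dashv g^D$.
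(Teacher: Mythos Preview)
Your argument is correct and is exactly the kind of direct proof the paper gestures at (the paper itself does not spell out a proof, only remarking that the lemma ``can be proven directly''). The three steps you outline---passing to compacts via the first recorded lemma, reversing the adjunction by conjugating with the anti-equivalence $D$ and then taking opposites, and finally invoking the second recorded lemma to extend back to the ambient categories---constitute the natural proof, and your bookkeeping in step~(b) is accurate: conjugation by equivalences preserves the adjunction, while passing to opposite categories swaps left and right adjoints, yielding $g^{D,c} \dashv f^{D,c}$ as required.
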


\begin{lemma}
  Suppose $f : \mathscr{X} \to \mathscr{Y}$ is a map of compactly generated stable categories with a anti-automorphism on the compact
  objects which preserves compact objects and limits.
  Let $g$ be the left adjoint of $f$.
  Then, $g^D$ is the right adjoint of $f^D$.
\end{lemma}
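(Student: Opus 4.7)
The plan is to deduce this lemma directly from the previous one by swapping the roles of $f$ and its adjoint. First, since $f$ is a colimit-preserving functor between presentable compactly generated stable categories that additionally preserves limits, the adjoint functor theorem supplies a left adjoint $g : \mathscr{Y} \to \mathscr{X}$, which automatically preserves colimits. In order for $g^D$ to be defined via the up-down duality construction, I need $g$ to preserve compact objects; this is exactly the content of the second lemma in this section: the hypothesis that $f$ admits a left adjoint forces $f^c$ to admit a left adjoint, and the left adjoint of $f$ is then induced from the left adjoint of $f^c$. In particular, $g$ preserves compact objects, so $g^D$ is well-defined.

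With both $f^D$ and $g^D$ in hand, I would simply invoke the previous lemma (the immediately preceding one in the excerpt) with $g$ playing the role of its ``$f$'' and $f$ playing the role of its ``$g$.'' The hypotheses translate as follows: $g : \mathscr{Y} \to \mathscr{X}$ must preserve compact objects (just verified); $f$ must be the right adjoint of $g$ (true by construction of $g$ as the left adjoint of $f$); and $f$ must preserve compact objects (part of our standing assumptions). The conclusion of that lemma then reads: $f^D$ is the left adjoint of $g^D$, which is precisely the claim that $g^D$ is the right adjoint of $f^D$.

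There is essentially no obstacle here, since the proof is a formal swap exploiting the symmetry of the adjunction pair $(g, f)$. The role of the ``preserves limits'' hypothesis is exactly to guarantee the existence of $g$ together with $g$ preserving compact objects, which is the minimum needed to apply the previous lemma in the reversed direction. If any subtle point arises, it is only in confirming that the construction $(-)^D$ is applied using the same recipe in either variance, but both $f$ and $g$ are colimit-preserving and preserve compact objects, so this is automatic.
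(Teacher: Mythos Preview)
Your argument is correct. The paper does not actually spell out a proof here; it simply notes that this lemma (and the one before it) ``can be proven directly'' as consequences of the two recorded lemmas from \cite{BAS}. Your reduction---using the second \cite{BAS} lemma to see that the left adjoint $g$ preserves compact objects, and then invoking the immediately preceding lemma with the pair $(g,f)$ in place of $(f,g)$---is a clean way to carry this out and is entirely in line with what the paper has in mind.
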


Now let us discuss left-right duality. Suppose $\mathscr{X}$ and $\mathscr{Y}$ are dualizable $\mathscr{V}$-categories, in the notation
of Appendix \ref{catresult}. Then for $f : \mathscr{X} \to \mathscr{Y}$ a colimit-preserving $\mathscr{V}$-linear functor, there is a colimit preserving dual functor
\[f^{\vee} : \mathscr{Y}^{\vee} \to \mathscr{X}^{\vee} \]
We refer to this duality as left-right duality. 
Left-right duality also interchanges adjunctions, namely the following is easily seen.
\begin{proposition}
  Suppose $f: \mathscr{X} \to \mathscr{Y}$ is left adjoint to $g : \mathscr{Y} \to \mathscr{X}$ and both are colimit-preserving
  $\mathscr{V}$-linear functors between $\mathscr{V}$-dualizable categories, then $g^{\vee}$ is left adjoint to $f^{\vee}$.
\end{proposition}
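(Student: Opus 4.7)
The plan is to transport the unit--counit data of $f \dashv g$ through the dualization operation $(-)^{\vee}$. The adjunction is encoded by a unit $\eta : \id_{\mathscr{X}} \to gf$ and counit $\epsilon : fg \to \id_{\mathscr{Y}}$ satisfying the two triangle identities. The key input I would use is that dualization, restricted to dualizable objects of $\Mod{\mathscr{V}}^L$, is a symmetric monoidal $(\infty,2)$-functor: it sends a 1-morphism $h : \mathscr{A} \to \mathscr{B}$ to $h^{\vee} : \mathscr{B}^{\vee} \to \mathscr{A}^{\vee}$ (reversing 1-cells) while sending a 2-morphism $\alpha : F \Rightarrow G$ to a 2-morphism $\alpha^{\vee} : F^{\vee} \Rightarrow G^{\vee}$ pointing in the same 2-direction (concretely, by whiskering $\alpha$ with the evaluation and coevaluation maps witnessing dualizability).

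Applying this to $\eta$ and $\epsilon$, and using the canonical isomorphisms $(\id)^{\vee} \cong \id$ and $(kh)^{\vee} \cong h^{\vee} k^{\vee}$, I obtain natural transformations
\[\eta^{\vee} : \id_{\mathscr{X}^{\vee}} \to f^{\vee} g^{\vee}, \qquad \epsilon^{\vee} : g^{\vee} f^{\vee} \to \id_{\mathscr{Y}^{\vee}}.\]
These are exactly the shapes required to serve as unit and counit of an adjunction $g^{\vee} \dashv f^{\vee}$: setting $L := g^{\vee}$ and $R := f^{\vee}$, the unit should have type $\id \to RL = f^{\vee} g^{\vee}$, and the counit $LR = g^{\vee} f^{\vee} \to \id$. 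The matching of directions here is the key sanity check, and it is the reason the statement is formulated as ``$g^{\vee}$ is left adjoint to $f^{\vee}$'' rather than the other way around.

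The final step is to verify the two triangle identities. Since horizontal and vertical composition of 2-morphisms are preserved by $(-)^{\vee}$ (being part of the $(\infty,2)$-functoriality of dualization), the triangle identities for $(\eta^{\vee}, \epsilon^{\vee})$ are obtained by applying $(-)^{\vee}$ to those for $(\eta, \epsilon)$. The genuine obstacle is thus really setting up the $(\infty,2)$-categorical coherence of the dualization operation carefully enough to make this last step rigorous; once that infrastructure is in place, the conclusion is entirely formal. An alternative, more hands-on route that avoids explicit triangle-identity bookkeeping is to verify the adjunction via a Hom-space isomorphism, using the dualizability identifications $\Hom_{\Mod{\mathscr{V}}^L}(\mathscr{X}^{\vee} \otimes \mathscr{A}, \mathscr{B}) \cong \Hom_{\Mod{\mathscr{V}}^L}(\mathscr{A}, \mathscr{X} \otimes \mathscr{B})$ to transpose the given adjunction across the dualities of $\mathscr{X}$ and $\mathscr{Y}$.
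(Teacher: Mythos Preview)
Your proposal is correct and is precisely the standard formal argument the paper has in mind: the paper does not actually give a proof of this proposition, merely introducing it with ``the following is easily seen,'' and your transport of the unit--counit data through the contravariant-on-1-cells, covariant-on-2-cells dualization functor is the expected justification. Your direction check (that $\eta^{\vee}$ and $\epsilon^{\vee}$ land with the right shapes for $g^{\vee} \dashv f^{\vee}$) is the one nontrivial point, and you have it right.
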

\begin{corollary} \label{compactlimit}
  Suppose $f: \mathscr{X} \to \mathscr{Y}$ is a $\mathscr{V}$-linear colimit-preserving functor between compactly generated
  $\mathscr{V}$-module categories. Then $f$ preserves compacts if and only if
  $f^{\vee}$ is limit preserving.
\end{corollary}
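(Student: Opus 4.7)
The plan is to chain the preceding proposition (left-right duality exchanges adjunctions) together with two standard facts: (a) for a colimit-preserving functor $f$ between compactly generated stable categories, $f$ preserves compact objects if and only if its right adjoint preserves colimits; and (b) between presentable categories, a colimit-preserving functor preserves limits if and only if it has a left adjoint. Let $g : \mathscr{Y} \to \mathscr{X}$ be the right adjoint of $f$; it exists by the adjoint functor theorem and is automatically $\mathscr{V}$-linear.

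For the forward direction, I would assume $f$ preserves compacts. By fact (a) the right adjoint $g$ is colimit-preserving, so both $f$ and $g$ are colimit-preserving $\mathscr{V}$-linear functors between dualizable $\mathscr{V}$-categories. Applying the preceding proposition to the adjunction $f \dashv g$ yields $g^{\vee} \dashv f^{\vee}$, which exhibits a left adjoint to $f^{\vee}$, hence $f^{\vee}$ preserves limits.

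For the converse, assume $f^{\vee}$ is limit-preserving. Since $f^{\vee}$ is a colimit-preserving functor between presentable categories (colimit-preservation comes for free from the construction of the dual functor), fact (b) produces a left adjoint $h$ to $f^{\vee}$, and $h$ is again colimit-preserving and $\mathscr{V}$-linear. Applying the preceding proposition now to the adjunction $h \dashv f^{\vee}$ gives $(f^{\vee})^{\vee} \dashv h^{\vee}$. Identifying $(f^{\vee})^{\vee}$ with $f$ via the canonical double-dual equivalence on dualizable categories, this becomes $f \dashv h^{\vee}$, so by uniqueness of adjoints $g \cong h^{\vee}$. Since dualization sends colimit-preserving functors to colimit-preserving functors, $g$ preserves colimits, and by fact (a) again $f$ preserves compact objects.

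The main technical subtlety is the double-dual identification $(f^{\vee})^{\vee} \simeq f$ together with the compatibility of this identification with the adjunctions appearing, but this is a standard coherence property of duality data for dualizable objects in a symmetric monoidal $\infty$-category and is implicit in the setup of left-right duality recalled earlier.
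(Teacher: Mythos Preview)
Your proof is correct and follows essentially the same logic as the paper's: the paper's one-line argument chains exactly the same three equivalences (compact-preservation $\Leftrightarrow$ colimit-preserving right adjoint $\Leftrightarrow$ $f^{\vee}$ has a left adjoint $\Leftrightarrow$ $f^{\vee}$ preserves limits), and you have simply unpacked each direction explicitly, including the double-dual identification that the paper leaves implicit. One small imprecision: the right adjoint $g$ is not ``automatically'' $\mathscr{V}$-linear in general---that requires Theorem~\ref{Proj1} and hence the compact-preservation hypothesis---but you only invoke $\mathscr{V}$-linearity of $g$ after that hypothesis is in force, so the argument is unaffected.
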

\begin{proof}
  $f$ preserves compact objects if and only if it has a colimit-preserving right adjoint, which is true if and only if $f^{\vee}$ has
  a left adjoint, which is equivalent to $f^{\vee}$ preserving limits.
\end{proof}

Left-right duality does not change the kernels of Fourier-Mukai transforms. More precisely, the following is also easily checked
\begin{proposition} \label{kernelp}
  Suppose \[f: \mathscr{X} \to \mathscr{Y}\] is given by the Fourier-Mukai transform with kernel \[K \in \mathscr{X}^{\vee} \otimes_{\mathscr{V}} \mathscr{Y}\]
  (all colimit-preserving $\mathscr{V}$-linear functors are of this form) then
  \[f^{\vee} : \mathscr{Y}^{\vee} \to \mathscr{X}^{\vee}\] is given by the same kernel $K$ inside
  \[(\mathscr{Y}^{\vee})^{\vee} \otimes_{\mathscr{V}} \mathscr{X}^{\vee} \cong \mathscr{X}^{\vee} \otimes_{\mathscr{V}} \mathscr{Y}\]
\end{proposition}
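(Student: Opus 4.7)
The plan is to unwind the two constructions involved---the dual $f^{\vee}$ of a $\mathscr{V}$-linear functor between dualizable categories, and the kernel of a Fourier--Mukai transform---and verify that they match via a triangle-identity calculation. First I would recall the standard identifications: for a dualizable $\mathscr{V}$-category $\mathscr{X}$ with evaluation $\mathrm{ev}_{\mathscr{X}}$ and coevaluation $\mathrm{coev}_{\mathscr{X}}$, the equivalence
\[\Hom^L_{\mathscr{V}}(\mathscr{X}, \mathscr{Y}) \simeq \mathscr{X}^{\vee} \otimes_{\mathscr{V}} \mathscr{Y}\]
sends a functor $f$ to the kernel $K_f := (\mathrm{id}_{\mathscr{X}^{\vee}} \otimes f) \circ \mathrm{coev}_{\mathscr{X}}$, while a kernel $K$ is sent to the Fourier--Mukai functor built from $K$ and $\mathrm{ev}_{\mathscr{X}}$. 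Second, I would recall the construction of $f^{\vee}$ in a symmetric monoidal $(\infty,2)$-category, given by the familiar zigzag
\[\mathscr{Y}^{\vee} \xrightarrow{\mathrm{id} \otimes \mathrm{coev}_{\mathscr{X}}} \mathscr{Y}^{\vee} \otimes \mathscr{X} \otimes \mathscr{X}^{\vee} \xrightarrow{\mathrm{id} \otimes f \otimes \mathrm{id}} \mathscr{Y}^{\vee} \otimes \mathscr{Y} \otimes \mathscr{X}^{\vee} \xrightarrow{\mathrm{ev}_{\mathscr{Y}} \otimes \mathrm{id}} \mathscr{X}^{\vee}.\]

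With these in place, I would compute $K_{f^{\vee}} \in (\mathscr{Y}^{\vee})^{\vee} \otimes \mathscr{X}^{\vee}$ directly, by substituting the zigzag formula for $f^{\vee}$ into the kernel definition. Using the triangle identity for the dual pair $(\mathscr{Y}, \mathscr{Y}^{\vee})$, the $\mathrm{ev}_{\mathscr{Y}}$ appearing in the zigzag cancels with the $\mathrm{coev}_{\mathscr{Y}^{\vee}}$ coming from the kernel definition applied to $\mathscr{Y}^{\vee}$, leaving behind $(\mathrm{id}_{\mathscr{X}^{\vee}} \otimes f) \circ \mathrm{coev}_{\mathscr{X}} = K_f$ placed in $\mathscr{X}^{\vee} \otimes \mathscr{Y}$ via the canonical symmetry isomorphism $\mathscr{Y} \otimes \mathscr{X}^{\vee} \simeq \mathscr{X}^{\vee} \otimes \mathscr{Y}$ that (together with double-duality) realizes the identification stated in the proposition.

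The main obstacle is bookkeeping rather than mathematics: one must consistently track the symmetries used to identify $\mathrm{coev}_{\mathscr{Y}^{\vee}}$ with a swap of $\mathrm{ev}_{\mathscr{Y}}$ under the double duality $(\mathscr{Y}^{\vee})^{\vee} \simeq \mathscr{Y}$, and to pass between $\mathscr{X}^{\vee} \otimes \mathscr{Y}$ and $\mathscr{Y} \otimes \mathscr{X}^{\vee}$. Once a convention is fixed, the content of the argument is exactly the triangle identity in a symmetric monoidal $(\infty, 2)$-category, which holds formally from the axioms of dualizability and therefore lifts automatically from the familiar 1-categorical picture.
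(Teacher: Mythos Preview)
Your proposal is correct and is precisely the standard unwinding that the paper leaves implicit: the paper states only that the proposition ``is also easily checked'' and gives no proof, so your argument via the zigzag definition of $f^{\vee}$ together with the triangle identity for $(\mathscr{Y},\mathscr{Y}^{\vee})$ is exactly the expected verification.
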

\begin{remark}
  Suppose $\mathscr{V} \cong \Mod{k}$ for a commutative ring $k$, $\mathscr{X} \cong \Mod{A}$, and $\mathscr{Y} \cong \Mod{B}$
  for some $k$-algebras $A$ and $B$. Let $f : \Mod{A} \to \Mod{B}$ be given by tensoring over $A$ with some $(B,A)$ bimodule $M$. In this case
  $f^{\vee}: \Mod{\opposite{B}} \to \Mod{\opposite{A}}$ is given by tensoring over $\opposite{B}$ with the same $M$, thought of as a $(\opposite{A},\opposite{B})$ bimodule.
\end{remark}
In practice we will almost never use the the superscript $^{\vee}$ to denote left-right duality. We note here that if $X$ is a spectral scheme over $S$, satisfying the conditions of the previous sections,
$\QC{X}$ is always self-dual over $\QC{S}$ (see \cite{SAG} 9.4.2.2 and 9.4.3.2). As a consequence of the Proposition \ref{kernelp}, we note that
left-right duality switches quasicoherent pullback with quasicoherent pushforward, as they are given by the same Fourier-Mukai kernels. Finally, suppose
we are given Noetherian spectral schemes $X$ over $S$ as in the Section \ref{G}. Let $Z$ be a closed subscheme of $X$. Then
\begin{proposition} \label{selfdual}
  $\Gamma_Z(\QC{X})$ is self-dual and left-right duality interchanges $i_Z$ with $\Gamma_Z$.
\end{proposition}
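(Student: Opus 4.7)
The plan is to leverage the self-duality of $\QC{X}$ together with the fact that, for Noetherian schemes, $\Gamma_Z$ is a smashing localization: the composite $i_Z \Gamma_Z : \QC{X} \to \QC{X}$ is naturally equivalent to tensoring with the idempotent algebra $i_Z\Gamma_Z \mathcal{O}_X$ over $\mathcal{O}_X$. Hence $\Gamma_Z(\QC{X})$ is equivalent to the category of modules over this idempotent algebra in $\QC{X}$, which makes it a $\QC{X}$-module (and $\QC{S}$-module) category that is dualizable as a retract of $\QC{X}$.

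To prove self-duality, I would construct explicit duality data for $\Gamma_Z(\QC{X})$ by transporting the self-duality data of $\QC{X}$. The evaluation is
\[\epsilon = \epsilon_X \circ (i_Z \otimes i_Z) : \Gamma_Z(\QC{X}) \otimes_{\QC{S}} \Gamma_Z(\QC{X}) \to \QC{S},\]
and the coevaluation is
\[\eta = (\Gamma_Z \otimes \Gamma_Z) \circ \eta_X : \QC{S} \to \Gamma_Z(\QC{X}) \otimes_{\QC{S}} \Gamma_Z(\QC{X}).\]
The triangle identities for this data then reduce to those for $\QC{X}$ using the full faithfulness $\Gamma_Z i_Z \cong \id$ and the idempotency of tensoring with $i_Z\Gamma_Z \mathcal{O}_X$.

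For the identification $i_Z^{\vee} \cong \Gamma_Z$, I would invoke Proposition \ref{kernelp}: left-right duality preserves Fourier--Mukai kernels. Under the identifications $\QC{X}^{\vee} \cong \QC{X}$ and $\Gamma_Z(\QC{X})^{\vee} \cong \Gamma_Z(\QC{X})$ from the previous step, the Fourier--Mukai kernel of $i_Z$ lives in $\Gamma_Z(\QC{X}) \otimes_{\QC{S}} \QC{X}$ and is given by $\delta_*(\Gamma_Z \mathcal{O}_X)$; this can be deduced by noting that $i_Z \Gamma_Z : \QC{X} \to \QC{X}$ has Fourier--Mukai kernel $\delta_*(\Gamma_Z \mathcal{O}_X) \in \QC{X \times_S X}$ and that $\Gamma_Z \circ i_Z \cong \id$. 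This object is manifestly symmetric under the swap of the two factors of $X \times_S X$, so when reinterpreted in $\QC{X} \otimes_{\QC{S}} \Gamma_Z(\QC{X})$ it represents the kernel of $\Gamma_Z : \QC{X} \to \Gamma_Z(\QC{X})$, yielding $i_Z^{\vee} \cong \Gamma_Z$.

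The main obstacle will be the verification of the triangle identities together with the precise identification of tensor-product categories such as $\Gamma_Z(\QC{X}) \otimes_{\QC{S}} \QC{X} \cong \Gamma_{Z \times X}(\QC{X \times_S X})$, which is an analog of Proposition \ref{quasicoherent} for an arbitrary closed subscheme and should follow from the smashing structure on $\Gamma_Z$. This smashing property of $\Gamma_Z$ on Noetherian schemes is the essential technical input that makes the duality data coherent and allows the Fourier--Mukai kernel computation above to go through.
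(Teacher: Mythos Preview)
Your proposal is essentially the same as the paper's proof: both construct the self-duality data for $\Gamma_Z(\QC{X})$ using the Fourier--Mukai kernel $\Gamma_Z(\mathcal{O}_\Delta) \cong \delta_*(\Gamma_Z\mathcal{O}_X)$, and both identify $i_Z^\vee \cong \Gamma_Z$ by observing that the two functors share this kernel (invoking Proposition~\ref{kernelp}). The paper phrases the first part as ``repeat the BZFN argument for perfect stacks, replacing $\mathcal{O}_\Delta$ by $\Gamma_Z(\mathcal{O}_\Delta)$,'' which is exactly your explicit evaluation/coevaluation construction.

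The one genuine difference in emphasis is the technical input you lean on. You stress the smashing property of $\Gamma_Z$ (that $i_Z\Gamma_Z$ is tensoring with the idempotent $i_Z\Gamma_Z\mathcal{O}_X$), which lets you verify the triangle identities and the tensor identification $\Gamma_Z(\QC{X}) \otimes_{\QC{S}} \QC{X} \cong \Gamma_{Z\times X}(\QC{X\times_S X})$ by pure module-category algebra. The paper instead relies on compact generation of $\Gamma_Z(\QC{X})$ (Lemma~\ref{compactgen}) so that the BZFN machinery applies verbatim. Your route is slightly more robust in that it would work even without knowing compact generation; the paper's route is shorter since it outsources the verification to an existing reference. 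Both are valid and arrive at the same kernel-level description.
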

\begin{proof}
  We can apply the same argument as the standard proof that $\QC{X}$ is self-dual when $X$ is a perfect stack (for example Corollary 4.8 in \cite{BZFN}, though
  note that they use a stronger than necessary definition of perfect stack).
  The only difference is that when showing $\QC{X}$ is self-dual, the unit and counit maps are given Fourier-Mukai transforms with the kernel
  \[\mathscr{O}_{\Delta} \in \QC{X \times_S X}\]
  Whereas to show $\Gamma_Z(\QC{X})$ is self-dual, we use instead the kernel
  \[\Gamma_Z(\mathcal{O}_{\Delta}) \in \Gamma_Z(\QC{X}) \otimes_{\QC{S}} \Gamma_Z(\QC{X})\]
  The rest of the proof proceeds the same way as in \cite{BZFN}.

  For the second part of the proposition, simply check that both functors are given by the same Fourier-Mukai kernel, namely,
  \[\Gamma_Z(\mathcal{O}_{\Delta}) \in \Gamma_Z(\QC{X}) \otimes_{\QC{S}} \QC{X}\]
\end{proof}

\section{Dualizing Complexes and the Lower Shriek Functor}
In this section, we introduce the lower shriek functor and prove some Hochschild-type formulas
which appear in \cite{N1}. We have seen that the upper shriek functor satisfies
\[p^!(\_) \cong p^*(\_) \otimes \omega\]
Now the lower shriek functor will turn out to satisfy an analogous equation, namely,
\[p_!(\_) \cong p_*(\_ \otimes \omega)\]
In fact, these two are simply related by left-right duality. 
We also caution that our use of the symbol lower shriek is not necessarily standard, in particular
it is not analogous to the étale lower shriek. However, this notation is not original either, for example see \cite{Perry}.
We insist on this notation because it is consistent with how the rest of our notation behaves under left-right duality.
Much of this section is inspired by arguments in \cite{BAS} and \cite{N1}.

Suppose $p_X :X \to S$ satisfies the hypotheses in Section \ref{G}. The
following theorem is the left-right dual of Theorem \ref{AGinput1}.
\begin{theodef} \label{AGcoro2}
  Denote by $\tilde{\pi}_1^*$ the functor
  \[\Gamma_{\Delta}\pi_1^* : \QC{X} \to \Gamma_{\Delta}(\QC{X \times_S X})\]
  Then, $\tilde{\pi}_1^*$ preserves limits--we denote by $\tilde{\pi}_{1,\times}$ its
  left adjoint. 
\end{theodef}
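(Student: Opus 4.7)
The plan is to deduce this theorem from Theorem \ref{AGinput1} via the left-right duality formalism developed in Section 3. The key identification I would establish first is
\[(\tilde{\pi}_{1,*})^{\vee} \cong \tilde{\pi}_1^*\]
as colimit-preserving $\QC{S}$-linear functors $\QC{X} \to \Gamma_{\Delta}(\QC{X \times_S X})$. For this, I would use that $\QC{X}$ and $\QC{X \times_S X}$ are self-dual over $\QC{S}$, that $\Gamma_{\Delta}(\QC{X \times_S X})$ is self-dual by Proposition \ref{selfdual}, that pullback and pushforward are left-right dual (the remark following Proposition \ref{kernelp}, applied to $\pi_1$), and that $i_{\Delta}$ and $\Gamma_{\Delta}$ are interchanged by left-right duality (the second claim of Proposition \ref{selfdual}). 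Writing $\tilde{\pi}_{1,*} = \pi_{1,*} \circ i_{\Delta}$, left-right duality reverses the order of composition to give $\Gamma_{\Delta} \circ \pi_1^* = \tilde{\pi}_1^*$, as desired.

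With this identification in hand, the limit-preservation of $\tilde{\pi}_1^*$ follows immediately: Theorem \ref{AGinput1} asserts that $\tilde{\pi}_{1,*}$ preserves compact objects, and Corollary \ref{compactlimit} translates this exactly into the statement that its left-right dual preserves limits. Hence $\tilde{\pi}_1^*$ preserves limits.

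To produce the left adjoint $\tilde{\pi}_{1,\times}$, I would invoke the adjoint functor theorem for presentable stable categories. Both the source $\QC{X}$ and the target $\Gamma_{\Delta}(\QC{X \times_S X})$ are presentable (the latter because it is a localization of a presentable category, or by Proposition \ref{quasicoherent}); the functor $\tilde{\pi}_1^* = \Gamma_{\Delta} \circ \pi_1^*$ is accessible, being a composition of accessible functors; and we have just shown it preserves limits. So a left adjoint exists.

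The only nontrivial step is the identification of $\tilde{\pi}_1^*$ as the left-right dual of $\tilde{\pi}_{1,*}$. Once one has internalized the duality dictionary (pullback-pushforward swap, $i_Z$-$\Gamma_Z$ swap, and reversal of composition), this is essentially bookkeeping, but it is the substantive content of the theorem--everything else is formal consequences of the dualizability results already proved.
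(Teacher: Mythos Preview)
Your proposal is correct and follows essentially the same approach as the paper. The paper's proof is a two-line assertion that $\tilde{\pi}_1^*$ is the left-right dual of $\tilde{\pi}_{1,*}$ and then applies Corollary \ref{compactlimit} to Theorem \ref{AGinput1}; you spell out the justification for that duality identification (via Proposition \ref{selfdual} and the pullback/pushforward swap) and make explicit the appeal to the adjoint functor theorem, which in the paper is folded into the proof of Corollary \ref{compactlimit}.
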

\begin{proof}
  $\tilde{\pi}_1^*$ is the left-right dual of $\tilde{\pi}_{1,*}$, so the theorem
  follows from Corollary \ref{compactlimit} applied to Theorem \ref{AGinput1}.
\end{proof}

\begin{remark}
  We note that $\tilde{\pi}_{1,\times}$ preserves compact objects because $\tilde{\pi}_1^*$ is colimit-preserving.
  Also, $\tilde{\pi}_{1,\times}$ is left-right dual to $\tilde{\pi}_1^{\times}$.
\end{remark}

Let $\delta : X \to X \times_S X$ be the diagonal map. Then,
\[\delta_*: \QC{X} \to \QC{X \times_S X}\]
factors through
\[i_{\Delta} : \Gamma_{\Delta}(\QC{X \times_S X}) \to \QC{X \times_S X}\]
Namely,
\[\delta_* \cong i_{\Delta} \tilde{\delta}_*\]
We are now ready to define the lower shriek functor, in analogy to the upper shriek functor.
\begin{definition}
  The lower shriek functor $p_{X,!}: \QC{X} \to \QC{S}$ is defined by
  \[p_{X,!}:= p_{X,*}\tilde{\pi}_{1,\times}\tilde{\delta}_*\]
\end{definition}
\begin{remark}
  By comparison with (\ref{eq2}) it is clear that $p_{X,!}$ is the left-right dual of $p_X^!$.
\end{remark}

We can now take most of the results of section 2 and apply left-right duality to them to
obtains new results about lower shriek. For example, we have the following analogue of Proposition \ref{openshriek}, which follows directly from left-right duality.
\begin{proposition}
  Suppose $u : U \to X$ is an étale map , then
  \[p_{U,!} \cong p_{X,!}u_* \cong p_{X,!}u_!\]
\end{proposition}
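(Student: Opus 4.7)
The plan is to apply left-right duality termwise to Proposition \ref{openshriek}. Recall that the remark immediately preceding this statement already records that $p_{X,!}$ is, by construction, the left-right dual of $p_X^!$, and the discussion in Section 3 shows that left-right duality interchanges quasicoherent pullback with quasicoherent pushforward. Thus, viewing $u^* : \QC{X} \to \QC{U}$ as a colimit-preserving $\QC{S}$-linear functor between dualizable $\QC{S}$-module categories (see \cite{SAG} 9.4.2.2 and 9.4.3.2), its left-right dual is identified with $u_* : \QC{U} \to \QC{X}$.

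To obtain the first isomorphism $p_{U,!} \cong p_{X,!} u_*$, I would dualize the isomorphism $p_U^! \cong u^* p_X^!$ of Proposition \ref{openshriek}. Since left-right duality is contravariant on colimit-preserving $\QC{S}$-linear functors, the dual of $u^* \circ p_X^!$ is $(p_X^!)^\vee \circ (u^*)^\vee \cong p_{X,!} \circ u_*$, while the dual of $p_U^!$ is $p_{U,!}$, as desired.

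For the second isomorphism, it suffices to show $u_! \cong u_*$ for separated étale $u$. This is precisely the left-right dual of Proposition \ref{csk2}, which asserts $u^! \cong u^*$ in the same situation; dualizing both sides and using the identifications above yields $u_! \cong u_*$, and composing with $p_{X,!}$ on the left then produces the full chain. The main (minor) obstacle in executing this plan is simply confirming that each constituent of $p_X^!$ (namely $\delta^*$, $\pi_1^{\times}$, and $p_X^*$) dualizes to the corresponding constituent of $p_{X,!}$ (namely $\tilde{\delta}_*$, $\tilde{\pi}_{1,\times}$, and $p_{X,*}$); these follow respectively from Proposition \ref{selfdual}, the remark following Theorem/Definition \ref{AGcoro2}, and the pullback-pushforward duality recorded in Section 3.
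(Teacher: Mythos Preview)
Your proposal is correct and follows exactly the approach taken in the paper: the paper's entire proof is the remark that this proposition is the analogue of Proposition \ref{openshriek} and ``follows directly from left-right duality.'' Your elaboration of how each constituent functor dualizes (and your invocation of Proposition \ref{selfdual} and the remark after Theorem/Definition \ref{AGcoro2}) simply makes explicit what the paper leaves implicit, but the argument is the same.
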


Also, we can take the left-right dual of (\ref{shriekeq}) to get
\begin{proposition}
  \[p_{X,!}(\mathcal{F}) \cong p_{X,*}(\mathcal{F} \otimes \omega_X)\]
\end{proposition}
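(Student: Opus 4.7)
The plan is to obtain this identity purely formally, by applying left-right duality to the formula (\ref{shriekeq}) of the previous section. As noted in the remark following the definition of $p_{X,!}$, the functor $p_{X,!}$ is the left-right dual of $p_X^!$. Equation (\ref{shriekeq}) identifies $p_X^!$ with the composition of $p_X^*$ followed by tensoring with $\omega_{X/S}$, so it is enough to compute the left-right dual of each piece of this composition and paste them together (using that left-right duality is a monoidal involution on colimit-preserving $\QC{S}$-linear functors between dualizable $\QC{S}$-module categories, so it respects composition up to reversal of arrows).

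First, by the remark immediately following Proposition \ref{kernelp}, quasicoherent pullback $p_X^*$ and quasicoherent pushforward $p_{X,*}$ are given by the same Fourier-Mukai kernel on $X \times_S S \cong X$, and hence are interchanged by left-right duality. Second, the $\QC{X}$-linear endofunctor $\omega_{X/S} \otimes (-)$ on $\QC{X}$ corresponds (via the equivalence between $\QC{X}$-linear endofunctors of $\QC{X}$ and $\QC{X \times_S X}$) to the Fourier-Mukai kernel $\delta_*\omega_{X/S} \in \QC{X \times_S X}$. Because this kernel is supported on the diagonal, it is symmetric under the swap of the two factors, so by Proposition \ref{kernelp} the endofunctor $\omega_{X/S} \otimes (-)$ is self-dual under left-right duality.

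Putting this together, the left-right dual of $p_X^! \cong (\omega_{X/S} \otimes (-)) \circ p_X^*$ is the composition, in reverse order, of $p_{X,*}$ after $\omega_{X/S} \otimes (-)$; that is,
\[p_{X,!}(\mathcal{F}) \;\cong\; p_{X,*}(\mathcal{F} \otimes \omega_{X/S}),\]
as required.

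The only subtle point is verifying that the self-duality of $\omega_{X/S}\otimes(-)$ really is induced by the symmetric kernel $\delta_*\omega_{X/S}$, i.e.\ that the identification of $\QC{X}$ with $\QC{X}^\vee$ that we use here is the diagonal one used throughout the paper; this is the self-duality of $\QC{X}$ over $\QC{S}$ with unit and counit given by the kernel $\mathcal{O}_\Delta$ (cited before Proposition \ref{selfdual}), under which $\QC{X}$-linear endofunctors of $\QC{X}$ correspond to objects of $\QC{X \times_S X}$ via Fourier-Mukai. Once this bookkeeping is fixed, the proof is immediate from Proposition \ref{kernelp} and equation (\ref{shriekeq}).
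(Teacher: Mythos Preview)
Your proof is correct and follows exactly the approach indicated in the paper: the paper's proof is the single line ``we can take the left-right dual of (\ref{shriekeq})'', and you have simply unpacked what that entails, namely that $p_X^*$ dualizes to $p_{X,*}$ (already noted in the paper) and that the endofunctor $\omega_{X/S}\otimes(-)$ is self-dual because its Fourier--Mukai kernel $\delta_*\omega_{X/S}$ is swap-invariant. Your added care about which self-duality of $\QC{X}$ is being used is appropriate and matches the paper's conventions.
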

\begin{remark}
  We can also show these results directly by arguing with compact objects, however we choose to present the proofs by duality because
  they are cleaner.
\end{remark}

As a preparation for the next theorem, we need the following result.
\begin{proposition}
  For $\mathcal{F} \in \QC{X}$,
  \[\tilde{\pi}_{1,\times}\tilde{\delta}_*\mathcal{F} \cong \tilde{\delta}^* \tilde{\pi}_1^{\times}\mathcal{F} \cong \mathcal{F} \otimes \omega_{X/S}\]
  as $\QC{X}$-linear colimit-preserving functors.
\end{proposition}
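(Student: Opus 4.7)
The plan is to identify both sides of the proposed chain of isomorphisms with the functor $(-) \otimes \omega_{X/S}$, which is manifestly $\QC{X}$-linear and colimit-preserving. Establishing the second isomorphism $\tilde{\delta}^*\tilde{\pi}_1^{\times}\mathcal{F} \cong \mathcal{F} \otimes \omega_{X/S}$ reduces to checking that $\tilde{\delta}^*\tilde{\pi}_1^{\times}$ is a colimit-preserving $\QC{X}$-linear endofunctor of $\QC{X}$, since any such endofunctor is uniquely determined by its value on $\mathcal{O}_X$. Colimit-preservation and $\QC{X}$-linearity of $\tilde{\pi}_1^{\times}$ are Corollary \ref{AGcoro1}, so it suffices to check $\tilde{\delta}^* = \delta^* i_{\Delta}$ is $\QC{X}$-linear with respect to the action through $\Gamma_{\Delta}\pi_1^*$. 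The key observation is that for $\mathcal{G} \in \QC{X}$ and any diagonally supported $\mathcal{H}$, the natural map $\Gamma_{\Delta}\pi_1^*\mathcal{G} \otimes \mathcal{H} \to \pi_1^*\mathcal{G} \otimes \mathcal{H}$ is an isomorphism (the cofiber is supported both inside and outside the diagonal, hence vanishes), so applying $\delta^*$ and using $\delta^*\pi_1^* \cong \id$ gives $\tilde{\delta}^*(\Gamma_{\Delta}\pi_1^*\mathcal{G} \otimes \mathcal{H}) \cong \mathcal{G} \otimes \tilde{\delta}^*\mathcal{H}$. Evaluation at $\mathcal{O}_X$ then yields $\tilde{\delta}^*\tilde{\pi}_1^{\times}(\mathcal{O}_X) = \tilde{\delta}^*D_{X/S} = \omega_{X/S}$ by Definition \ref{dualizedef}, completing this half.

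For the first isomorphism, I would invoke left-right duality. Applying Proposition \ref{selfdual} to $\Delta \subseteq X \times_S X$, together with the factorizations $\tilde{\delta}^* = \delta^* i_{\Delta}$ and $\tilde{\delta}_* = \Gamma_{\Delta}\delta_*$ and the fact that left-right duality exchanges $\delta^*$ with $\delta_*$, we see that $\tilde{\delta}^*$ and $\tilde{\delta}_*$ are left-right dual. Combined with the remark after Theorem/Definition \ref{AGcoro2} identifying $\tilde{\pi}_1^{\times}$ as the left-right dual of $\tilde{\pi}_{1,\times}$, and the fact that duality reverses composition, $\tilde{\pi}_{1,\times}\tilde{\delta}_*$ is the left-right dual of $\tilde{\delta}^*\tilde{\pi}_1^{\times}$. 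A short computation using the projection formula and $\pi_2\delta = \id$ identifies the Fourier--Mukai kernel of $(-) \otimes \omega_{X/S}$ in $\QC{X \times_S X}$ with $\delta_*\omega_{X/S}$. Because this kernel is preserved by the swap of the two factors of $X \times_S X$ (the diagonal being swap-invariant), Proposition \ref{kernelp} implies that the left-right dual functor is represented by the same kernel, and therefore agrees with the original. Hence $\tilde{\pi}_{1,\times}\tilde{\delta}_* \cong \tilde{\delta}^*\tilde{\pi}_1^{\times}$.

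The main obstacle I anticipate is the bookkeeping around the self-duality of $\QC{X}$: in general, left-right duality of an endofunctor of a self-dual category twists the Fourier--Mukai kernel by the swap on $\QC{X \times_S X}$, so it is essential that the specific kernel $\delta_*\omega_{X/S}$ be swap-symmetric. Once this symmetry is recognized, Proposition \ref{kernelp} collapses everything cleanly. An alternative, more computational, route would be to identify $\tilde{\pi}_{1,\times}\tilde{\delta}_*(\mathcal{O}_X)$ via the adjunction $\tilde{\pi}_{1,\times} \dashv \tilde{\pi}_1^*$ and $\delta_* \dashv \delta^!$, but that seems to require explicit knowledge of $\delta^!\pi_1^*\mathcal{O}_X$ that is awkward in the current derived Noetherian generality; the duality argument sidesteps this.
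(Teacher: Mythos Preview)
Your proposal is correct and follows essentially the same strategy as the paper. The paper's proof is simply a compressed version of yours: it asserts that both composites are $\QC{X}$-linear and colimit-preserving (citing Corollary~\ref{AGcoro1} and Theorem/Definition~\ref{AGcoro2}), observes that any such endofunctor of $\QC{X}$ is tensoring with a sheaf and is therefore automatically self-dual under left-right duality, and concludes; you unpack the same ingredients---the $\QC{X}$-linearity of $\tilde{\delta}^*$, the identification of $\tilde{\pi}_{1,\times}\tilde{\delta}_*$ as the left-right dual of $\tilde{\delta}^*\tilde{\pi}_1^{\times}$, and the swap-invariance of the kernel $\delta_*\omega_{X/S}$---explicitly.
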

\begin{proof}
  Both the first and second expression are $\QC{X}$-linear colimit preserving functors of $\mathcal{F}$ by Corollary \ref{AGcoro1} and Corollary \ref{AGcoro2}.
  $\QC{X}$-linear colimit preserving functors from $\QC{X}$ to itself are automatically self-dual because they are simply given
  by tensoring with a quasicoherent sheaf on $X$, showing the first equality. In this case, it is easy to see the functor
  is given by tensoring with $\omega_{X/S}$. This shows the claim.
\end{proof}

We are now ready to establish a Hochschild-style formula which is known in some form since \cite{AILN} and is elaborated on in \cite{N1}.
\begin{theorem} \label{hochschildstyle}
  For $\mathcal{F} \in \QC{S}$ and $\mathcal{G} \in \QC{X}$, we have
  \[\delta^{\times}\pi_1^*\iHom(p_X^*\mathcal{F},\mathcal{G}) \cong \iHom(p_X^!\mathcal{F},\mathcal{G})\]
  where $\iHom$ denotes internal Hom of quasicoherent sheaves.
\end{theorem}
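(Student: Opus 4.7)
The plan is to first establish the auxiliary identity
\[\delta^\times \pi_1^* \cong \iHom(\omega_{X/S}, -) \quad \text{as functors } \QC{X} \to \QC{X},\]
and then deduce the theorem by a formal tensor-hom manipulation using Equation~(\ref{shriekeq}).

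For the auxiliary identity, I would first reduce $\delta^\times \pi_1^*$ to the tilde framework. Taking right adjoints of the factorization $\delta_* = i_\Delta \tilde{\delta}_*$ gives $\delta^\times = \tilde{\delta}^\times \Gamma_\Delta$, so $\delta^\times \pi_1^* = \tilde{\delta}^\times \Gamma_\Delta \pi_1^* = \tilde{\delta}^\times \tilde{\pi}_1^*$. Now $\tilde{\delta}_* \dashv \tilde{\delta}^\times$ by construction, and $\tilde{\pi}_{1,\times} \dashv \tilde{\pi}_1^*$ by Theorem/Definition~\ref{AGcoro2}; composing these two adjunctions yields $\tilde{\pi}_{1,\times}\tilde{\delta}_* \dashv \tilde{\delta}^\times\tilde{\pi}_1^*$. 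The Proposition immediately preceding the theorem identifies $\tilde{\pi}_{1,\times}\tilde{\delta}_*(\mathcal{F}) \cong \mathcal{F} \otimes \omega_{X/S}$, whose right adjoint is $\iHom(\omega_{X/S},-)$; this establishes the auxiliary identity.

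For the final step I would apply the identity with $\mathcal{H} = \iHom(p_X^* \mathcal{F}, \mathcal{G})$ and then invoke tensor-hom adjunction together with Equation~(\ref{shriekeq}):
\[\delta^\times \pi_1^* \iHom(p_X^* \mathcal{F}, \mathcal{G}) \cong \iHom(\omega_{X/S}, \iHom(p_X^* \mathcal{F}, \mathcal{G})) \cong \iHom(p_X^* \mathcal{F} \otimes \omega_{X/S}, \mathcal{G}) \cong \iHom(p_X^! \mathcal{F}, \mathcal{G}).\]

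The main subtlety is recognizing that although $\delta^\times \pi_1^*$ is neither manifestly a left nor a right adjoint as a composite on $\QC{X \times_S X}$ (being a right adjoint after a left adjoint), inserting $\Gamma_\Delta$ converts it into the right adjoint of the explicit tensor-with-$\omega_{X/S}$ functor. Once this observation is made, no further analysis of $\delta^\times \pi_1^*$ is needed beyond the tilde framework already established; the remainder is purely formal.
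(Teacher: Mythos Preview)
Your proposal is correct and follows essentially the same approach as the paper. The paper's proof also reduces $\delta^\times\pi_1^*$ to $\tilde{\delta}^\times\tilde{\pi}_1^*$, uses the adjunction $\tilde{\pi}_{1,\times}\tilde{\delta}_* \dashv \tilde{\delta}^\times\tilde{\pi}_1^*$ together with the preceding Proposition identifying the left adjoint with $(-)\otimes\omega_{X/S}$, and finishes with tensor--hom and Equation~(\ref{shriekeq}); the only cosmetic difference is that the paper threads a test object $\mathcal{H}$ through a Yoneda argument rather than stating the auxiliary identity $\delta^\times\pi_1^* \cong \iHom(\omega_{X/S},-)$ directly.
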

\begin{proof}
  Consider
  \[\delta^{\times}: \QC{X \times_S X} \to \QC{X}\]
  which is right adjoint to
  \[\delta_* \cong i_{\Delta}\Gamma_{\Delta}\delta_* \cong i_{\Delta}\tilde{\delta}_*\]
  Hence,
  \[\delta^{\times} \cong \tilde{\delta}^{\times}\Gamma_{\Delta}\]
  where $\tilde{\delta}^{\times}$ is right adjoint to $\tilde{\delta}_*$.

  Therefore, given $\mathcal{H} \in \QC{X}$, we have
  \begin{equation*}
    \begin{split}
      Hom_X(\mathcal{H},\delta^{\times}\pi_1^*\iHom(p_X^*\mathcal{F},\mathcal{G})) 
      &\cong Hom_X(\mathcal{H},\tilde{\delta}^{\times}\tilde{\pi}_1^*\iHom(p_X^*\mathcal{F},\mathcal{G})) \\
      &\cong Hom_X(\tilde{\pi}_{1,\times}\tilde{\delta}_*\mathcal{H},\iHom(p_X^*\mathcal{F},\mathcal{G})) \\
      &\cong Hom_X(\omega_{X/S} \otimes \mathcal{H},\iHom(p_X^*\mathcal{F},\mathcal{G})) \\
      &\cong Hom_X(\omega_{X/S} \otimes p_X^*\mathcal{F} \otimes \mathcal{H},\mathcal{G}) \\
      &\cong Hom_X(p_X^!\mathcal{F} \otimes \mathcal{H},\mathcal{G}) \\
      &\cong Hom_X(\mathcal{H},\iHom(p_X^!\mathcal{F},\mathcal{G})) \\
    \end{split}
  \end{equation*}
\end{proof}

Notice that if $X = \Spec{A}$ and $S = \Spec{k}$, then this theorem says (in a special case)
\[\Hom_{A \otimes A}(A,A \otimes A) \cong \Hom_A(\omega_A,A)\]
\begin{corollary}
  \[\iHom(\omega_X,\omega_X) \cong \mathcal{O}_X\]
\end{corollary}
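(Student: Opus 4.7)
The plan is to apply Theorem \ref{hochschildstyle} with $\mathcal{F} = \mathcal{O}_S$ and $\mathcal{G} = \omega_{X/S}$. Since $p_X^*\mathcal{O}_S \cong \mathcal{O}_X$ and $p_X^!\mathcal{O}_S \cong \omega_{X/S}$, the theorem collapses to
\[\iHom(\omega_{X/S}, \omega_{X/S}) \cong \delta^{\times}\pi_1^*\iHom(\mathcal{O}_X, \omega_{X/S}) \cong \delta^{\times}\pi_1^*\omega_{X/S},\]
so the task reduces to identifying the right-hand side with $\mathcal{O}_X$.

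I would do this in two moves. First, use the swap involution $\sigma : X \times_S X \to X \times_S X$. Since $\sigma$ is an isomorphism, $\sigma_*$ is an equivalence and hence $\sigma^{\times} \cong \sigma^*$; moreover $\pi_1\sigma = \pi_2$ and $\sigma\delta = \delta$, so
\[\delta^{\times}\pi_2^* \cong \delta^{\times}\sigma^{\times}\pi_1^* \cong (\sigma\delta)^{\times}\pi_1^* \cong \delta^{\times}\pi_1^*.\]
Second, invoke the Grothendieck–Sato formula (Corollary \ref{gsfor}), $D_{X/S} \cong \Gamma_{\Delta}\pi_2^*\omega_{X/S}$, combined with the factorization $\delta^{\times} \cong \tilde{\delta}^{\times}\Gamma_{\Delta}$ from the proof of Theorem \ref{hochschildstyle}, to get
\[\delta^{\times}\pi_2^*\omega_{X/S} \cong \tilde{\delta}^{\times}\Gamma_{\Delta}\pi_2^*\omega_{X/S} \cong \tilde{\delta}^{\times}D_{X/S} \cong \tilde{\delta}^{\times}\tilde{\pi}_1^{\times}\mathcal{O}_X \cong \mathcal{O}_X,\]
where the last isomorphism uses that the left-adjoint composition $\tilde{\pi}_{1,*}\tilde{\delta}_* = \pi_{1,*}\delta_* = (\pi_1\delta)_* = \id$, so by adjunction $\tilde{\delta}^{\times}\tilde{\pi}_1^{\times} \cong \id$. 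Chaining the two moves yields $\delta^{\times}\pi_1^*\omega_{X/S} \cong \mathcal{O}_X$, which finishes the proof via the reduction above.

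The main conceptual point is the swap trick: $\pi_1^*\omega_{X/S}$ is not manifestly a Grothendieck–Sato kernel, but the symmetry $\sigma\delta = \delta$ together with $\sigma^{\times} = \sigma^*$ (valid because $\sigma$ is an isomorphism) lets us trade it for $\pi_2^*\omega_{X/S}$, which is exactly what the Grothendieck–Sato formula computes. Everything else is bookkeeping of the adjunctions set up earlier in the paper.
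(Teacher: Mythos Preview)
Your proof is correct and matches the paper's approach: apply Theorem \ref{hochschildstyle}, use Grothendieck--Sato to identify the result with $\tilde{\delta}^{\times}D_X = \tilde{\delta}^{\times}\tilde{\pi}_1^{\times}\mathcal{O}_X$, and conclude via the adjunction coming from $\tilde{\pi}_{1,*}\tilde{\delta}_* \cong \id$. The only difference is that you make the $\pi_1 \leftrightarrow \pi_2$ swap explicit via the involution $\sigma$, whereas the paper writes $\tilde{\delta}^{\times}\tilde{\pi}_2^*\omega_X$ in its first line and leaves that symmetry implicit.
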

\begin{proof}
  \begin{equation*}
    \begin{split}
      \iHom(\omega_X,\omega_X)
      &\cong \tilde{\delta}^{\times}\tilde{\pi}_2^*\omega_X \\
      &\cong \tilde{\delta}^{\times}D_X \\
      &\cong \tilde{\delta}^{\times}\tilde{\pi}_1^{\times}\mathcal{O}_X \\
      &\cong \mathcal{O}_X \\
    \end{split}
  \end{equation*}
  The first isomorphism comes from the theorem above and the second is the Grothendieck-Sato formula.
\end{proof}

We record here a proposition which is morally dual to Proposition \ref{crossshriek} and Corollary
\ref{crossshriek3}, though we don't know how to show it directly by duality.
\begin{proposition}
  There is a natural map
  \[Hom(p_{X,!}\mathcal{F},\mathcal{G}) \to Hom(\mathcal{F},p_X^*\mathcal{G}) \]
  which is an isomorphism if the support of $\mathcal{F}$ is proper over $S$.
\end{proposition}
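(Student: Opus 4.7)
The plan is to construct the natural map by combining the formula $p_{X,!}\mathcal{F} \cong p_{X,*}(\mathcal{F} \otimes \omega_{X/S})$ from earlier in this section with the adjunction $p_{X,*} \dashv p_X^\times$ and the natural transformation $p_X^\times \to p_X^!$ of Proposition~\ref{crossshriek}, and then to deduce the isomorphism under proper support from Theorem~\ref{crossshriek4}.

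Concretely, I would build the map as the composite
\begin{align*}
\Hom(p_{X,!}\mathcal{F}, \mathcal{G})
&\cong \Hom(\mathcal{F} \otimes \omega_{X/S},\, p_X^\times \mathcal{G}) \\
&\to \Hom(\mathcal{F} \otimes \omega_{X/S},\, p_X^! \mathcal{G}) \\
&\cong \Hom(\mathcal{F},\, \iHom(\omega_{X/S},\, \omega_{X/S} \otimes p_X^*\mathcal{G})) \\
&\cong \Hom(\mathcal{F}, p_X^*\mathcal{G}),
\end{align*}
where the first isomorphism uses $p_{X,!}\mathcal{F} \cong p_{X,*}(\mathcal{F} \otimes \omega_{X/S})$ and $p_{X,*} \dashv p_X^\times$, the arrow is induced by $p_X^\times \to p_X^!$, and the last two isomorphisms are tensor--hom together with the identification $\iHom(\omega_{X/S}, \omega_{X/S} \otimes p_X^*\mathcal{G}) \cong p_X^*\mathcal{G}$. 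This last identification extends the corollary $\iHom(\omega_X, \omega_X) \cong \mathcal{O}_X$ from $\mathcal{G} = \mathcal{O}_S$ to arbitrary $\mathcal{G} \in \QC{S}$, and I would prove it by noting that Theorem~\ref{hochschildstyle} (with the $\mathcal{F}$ of that statement taken to be $\mathcal{O}_S$) identifies the left side with $\delta^\times \pi_1^* p_X^!\mathcal{G}$; base change for upper shriek (Theorem~\ref{shriekbase}) then rewrites this as $\delta^\times \pi_2^! p_X^*\mathcal{G}$, and combining $\delta^\times \cong \delta^!$ for the proper closed immersion $\delta$ (Corollary~\ref{csk}) with $\pi_2 \circ \delta = \id_X$ and functoriality of upper shriek (Theorem~\ref{csk3}) reduces this to $p_X^*\mathcal{G}$.

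If $\mathcal{F}$ has support in a closed $\Lambda \subseteq \abs{X}$ proper over $S$, then $\mathcal{F} \otimes \omega_{X/S}$ also has support in $\Lambda$, so the Homs on either side of the middle arrow of the chain are invariant under replacing $p_X^\times \mathcal{G}$ (resp.\ $p_X^!\mathcal{G}$) by $\Gamma_\Lambda p_X^\times \mathcal{G}$ (resp.\ $\Gamma_\Lambda p_X^!\mathcal{G}$). By Theorem~\ref{crossshriek4}, these are canonically isomorphic, so the middle arrow becomes an isomorphism and the entire chain is an isomorphism. The main technical obstacle is a compatibility check: verifying that the identification $\iHom(\omega_{X/S}, \omega_{X/S} \otimes p_X^*\mathcal{G}) \cong p_X^*\mathcal{G}$ is genuinely natural in $\mathcal{G}$, and that the Hochschild, base-change, and $\delta^\times \cong \delta^!$ manipulations assemble into a diagram that commutes with the arrow induced by $p_X^\times \to p_X^!$, so that the composite really agrees with the natural transformation described by the proposition.
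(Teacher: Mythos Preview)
Your overall strategy is essentially the paper's, just repackaged via the formula $p_{X,!}(-)\cong p_{X,*}(-\otimes\omega_{X/S})$ rather than the definition $p_{X,!}=p_{X,*}\tilde{\pi}_{1,\times}\tilde{\delta}_*$; both arguments hinge on the natural transformation $p_X^{\times}\to p_X^!$ and on Theorem~\ref{crossshriek4}. Your treatment of the proper-support part is actually somewhat cleaner than the paper's: you simply observe that $\mathcal{F}\otimes\omega_{X/S}$ is still supported on $\Lambda$, insert $\Gamma_\Lambda$ on the right, and invoke Theorem~\ref{crossshriek4} directly, whereas the paper runs a longer base-change computation inside $\Gamma_\Delta(\QC{X\times_S X})$.

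There is, however, one genuine gap in your construction of the map. To identify $\iHom(\omega_{X/S},\omega_{X/S}\otimes p_X^*\mathcal{G})\cong p_X^*\mathcal{G}$ you invoke $\delta^\times\cong\delta^!$ (Corollary~\ref{csk}) and then $\delta^!\pi_2^!\cong(\pi_2\delta)^!$ (Theorem~\ref{csk3}). But in this paper $f^!$ is \emph{only defined} for finite tor-amplitude maps (Definition~\ref{defshriek}), and the diagonal $\delta:X\to X\times_S X$ is generally \emph{not} of finite tor-amplitude unless $X\to S$ is something like smooth. So neither Corollary~\ref{csk} nor Theorem~\ref{csk3} applies to $\delta$ as stated. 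The fix is immediate and is exactly what the paper does in its lines 4--8: write $\delta^\times=\tilde{\delta}^\times\Gamma_\Delta$, use Theorem~\ref{crossshriek4} with the proper-over-$X$ subset $\Delta\subset X\times_S X$ to replace $\Gamma_\Delta\pi_2^!$ by $\Gamma_\Delta\pi_2^\times=\tilde{\pi}_2^\times$, and then conclude from $\tilde{\delta}^\times\tilde{\pi}_2^\times\cong\id$ (the right adjoint of $\tilde{\pi}_{2,*}\tilde{\delta}_*\cong\id$). With this correction your argument goes through.
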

\begin{proof}
  The map is constructed as follows
  \begin{equation*}
    \begin{split}
      Hom(p_{X,!}\mathcal{F},\mathcal{G})
      &\cong Hom(p_{X,*}\tilde{\pi}_{1,\times}\tilde{\delta}_*\mathcal{F},\mathcal{G}) \\
      &\cong Hom(\tilde{\delta}_*\mathcal{F},\tilde{\pi}_1^*p_X^{\times}\mathcal{G}) \\
      &\to Hom(\tilde{\delta}_*\mathcal{F},\tilde{\pi}_1^*p_X^!\mathcal{G}) \\
      &\cong Hom(\tilde{\delta}_*\mathcal{F},\Gamma_{\Delta}\pi_2^!p_X^*\mathcal{G}) \\
      &\cong Hom(\mathcal{F},\tilde{\delta}^{\times}\Gamma_{\Delta}\pi_2^!p_X^*\mathcal{G}) \\
      &\cong Hom(\mathcal{F},\tilde{\delta}^{\times}\Gamma_{\Delta}\pi_2^{\times}p_X^*\mathcal{G}) \\
      &\cong Hom(\mathcal{F},\tilde{\delta}^{\times}\tilde{\pi}_2^{\times}p_X^*\mathcal{G}) \\
      &\cong Hom(\mathcal{F},p_X^*\mathcal{G}) \\
    \end{split}
  \end{equation*}
  where the map in the third line comes from Proposition \ref{crossshriek}.
  The fourth line is base-change for upper shriek (see Theorem \ref{shriekbase}).
  On the sixth line we apply Theorem \ref{crossshriek4}.

  If $Z$ is the support of $\mathcal{F}$, then assuming $Z$ is proper over $S$, we want to show that the
  map on line three is an isomorphism. Indeed, 
  \begin{equation*}
    \begin{split}
      Hom(\tilde{\delta}_*\mathcal{F},\tilde{\pi}_1^*p_X^{\times}\mathcal{G}) 
      &\cong Hom(\tilde{\delta}_*\mathcal{F},\Gamma_{Z \times Z}\Gamma_{\Delta}\pi_{1}^*p_X^{\times}\mathcal{G}) \\
      &\cong Hom(\tilde{\delta}_*\mathcal{F},\Gamma_{\Delta}\Gamma_{Z \times Z}\pi_{1}^*p_X^{\times}\mathcal{G}) \\
      &\cong Hom(\tilde{\delta}_*\mathcal{F},\Gamma_{\Delta}\pi_{1}^*\Gamma_Zp_X^{\times}\mathcal{G}) \\
      &\cong Hom(\tilde{\delta}_*\mathcal{F},\Gamma_{\Delta}\Gamma_{X \times Z}\pi_2^{\times}p_X^*\mathcal{G}) \\
      &\cong Hom(\tilde{\delta}_*\mathcal{F},\Gamma_{Z \times Z}\Gamma_{\Delta}\pi_2^{\times}p_X^*\mathcal{G}) \\
      &\cong Hom(\tilde{\delta}_*\mathcal{F},\Gamma_{\Delta}\pi_2^{\times}p_X^*\mathcal{G}) \\
    \end{split}
  \end{equation*}
  where the fourth isomorphism follows from Theorem \ref{BC2} applied to
  $\mathscr{V} = \QC{S}$, $\mathscr{X} = \Gamma_Z(\QC{X})$, and $\mathscr{Y} = \QC{X}$, where the map
  $f = \Gamma_Zp_X^{\times} : \mathscr{V} \to \mathscr{X}$
  is the right adjoint of
  \[p_{X,*}i_Z : \Gamma_Z(\QC{X}) \to \QC{S}\]
  $f$ is colimit-preserving because $p_{X,*}i_Z$ preserves compact objects (argue as in Theorem \ref{AGinput1}).
  The map $g : \mathscr{V} \to \mathscr{Y}$
  is just the quasicoherent pullback.
\end{proof}

Lastly, we record a theorem about how our functors interact with up-down duality.
\begin{theorem}
  \[(\tilde{\pi}_{1,\times})^D \cong \tilde{\pi}_{1,*}\]
 and the isomorphism is étale local.
\end{theorem}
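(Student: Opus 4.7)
The strategy is to compare both sides on a colimit-generating collection of compact objects of $\Gamma_{\Delta}(\QC{X \times_S X})$, where both functors can be evaluated explicitly. To even define the up-down dual I need a symmetric monoidal structure on $\Gamma_{\Delta}(\QC{X \times_S X})$ with a well-behaved duality on compacts: I would take the convolution product $\star$ of Theorem/Definition \ref{convprod}, whose unit is $\delta_*\mathcal{O}_X$, together with the standard duality $\iHom(-,\mathcal{O}_X)$ on $\QC{X}$. Both $\tilde{\pi}_{1,*}$ and $(\tilde{\pi}_{1,\times})^D$ are colimit-preserving---the first by construction, the second by the defining Ind-extension---and $\tilde{\pi}_{1,\times}$ preserves compact objects by the remark following Theorem/Definition \ref{AGcoro2}, so $(\tilde{\pi}_{1,\times})^D$ is well-defined. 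It thus suffices to compare them on $\{\delta_* F : F \in \QC{X}^c\}$, which generates $\Gamma_{\Delta}(\QC{X \times_S X})$ under colimits.

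On such a generator $\delta_* F$, short adjunction computations give $\tilde{\pi}_{1,*}(\delta_* F) \cong \pi_{1,*}\delta_* F \cong F$ and $\tilde{\pi}_{1,\times}(\delta_* F) \cong F$, the latter using the adjunction $\tilde{\pi}_{1,\times} \dashv \tilde{\pi}_1^*$ together with $\pi_1 \circ \delta = \id$ and the fact that $\delta_* F$ is already diagonally supported. To identify $(\tilde{\pi}_{1,\times})^D(\delta_* F) \cong \bigl(\tilde{\pi}_{1,\times}((\delta_* F)^{\vee})\bigr)^{\vee}$ with $F$, I would use the fact that $\delta_*$ is symmetric monoidal from $(\QC{X},\otimes)$ to $(\Gamma_{\Delta}(\QC{X \times_S X}), \star)$: a derived base-change computation on the triple product $X \times_S X \times_S X$ yields $\delta_* F \star \delta_* G \cong \delta_*(F \otimes G)$, and symmetric monoidal functors preserve duals of dualizable objects, giving $(\delta_* F)^{\vee} \cong \delta_*(F^{\vee})$. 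Combining these identifications,
\[
(\tilde{\pi}_{1,\times})^D(\delta_* F) \cong (F^{\vee})^{\vee} \cong F \cong \tilde{\pi}_{1,*}(\delta_* F),
\]
which finishes the comparison on generators.

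For étale locality, given an étale map $u : U \to X$, Propositions \ref{quasicoherent}, \ref{qcoh2}, and \ref{prop14} show that $\Gamma_{\Delta}(\QC{U \times_S U})$ together with the functors $\tilde{\pi}_{1,U,*}$ and $\tilde{\pi}_{1,U,\times}$ are obtained by étale base change from their analogues on $X$, and the convolution monoidal structure is likewise compatible with étale pullback as noted after Theorem/Definition \ref{convprod}. Hence the entire up-down duality construction descends, and the isomorphism on $X$ specializes to the isomorphism on $U$. The main obstacle is verifying the symmetric monoidality of $\delta_*$, which reduces to a derived self-intersection formula for the diagonal in $X \times_S X \times_S X$ and uses the finite tor-amplitude hypothesis; a secondary concern is that the general formalism of up-down duality requires compact objects of $\Gamma_{\Delta}(\QC{X \times_S X})$ to coincide with dualizable objects under convolution, but this is only needed on the generators $\delta_* F$, where dualizability is manifest from the monoidality of $\delta_*$.
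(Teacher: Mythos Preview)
There are two genuine gaps. First, the anti-involution on compact objects of $\Gamma_\Delta(\QC{X \times_S X})$ used in this paper to form $(\cdot)^D$ is \emph{not} convolution duality: it is the ordinary dual $K \mapsto \iHom_{X \times_S X}(K, \mathcal O_{X\times_S X})$, which is well-defined because compacts of $\Gamma_\Delta$ are perfect on $X\times_S X$ (via $i_\Delta$) and perfect complexes supported on $\Delta$ have duals again supported on $\Delta$. The convolution product is not symmetric (it models composition in $\End_{\QC{S}}(\QC{X})$), so a ``convolution dual'' is not unambiguously defined, and there is no reason compacts of $\Gamma_\Delta$ should be convolution-dualizable. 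With your choice of duality you are attempting to prove a different statement from the one in the theorem.

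Second, and independently, your computation $\tilde\pi_{1,\times}(\delta_*F)\cong F$ is wrong: by the Proposition immediately preceding Theorem~\ref{hochschildstyle} one has $\tilde\pi_{1,\times}\tilde\delta_*\cong(-)\otimes\omega_{X/S}$. Your adjunction argument actually yields $\Hom(\tilde\pi_{1,\times}\delta_*F,N)\cong\Hom(F,\delta^\times\pi_1^*N)$, and $\delta^\times\pi_1^*\cong\iHom(\omega_X,-)$, not the identity---you have confused it with $\delta^*\pi_1^*=\id$. Moreover, $\delta_*F$ is not compact in $\Gamma_\Delta(\QC{X\times_S X})$ unless the diagonal has finite tor-amplitude, which is not part of the hypotheses, so these objects cannot be fed into the $(\cdot)^D$ formalism in the first place. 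The paper instead takes an arbitrary compact $K$ and proves $\Hom_X(\tilde\pi_{1,\times}(K^\vee),L)\cong\Hom_X((\tilde\pi_{1,*}K)^\vee,L)$ naturally in $K$ and $L$ via the adjunctions $\tilde\pi_{1,\times}\dashv\tilde\pi_1^*$ and $i_\Delta\dashv\Gamma_\Delta$, dualizability of $K$ for the ordinary tensor product on $X\times_S X$, and the projection formula for $\pi_1$.
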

\begin{proof}
  It suffices to show there is a natural isomorphism
  \[\Hom_{\QC{X}}(\tilde{\pi}_{1,\times}(K^{\vee}),L) \cong \Hom_{\QC{X}}((\tilde{\pi}_{1,*}K)^{\vee},L)\]
  for $K$ compact in $\Gamma_{\Delta}(\QC{X \times_S X})$ and $L$ in $\QC{X}$.
  But this follows from
  \begin{equation*}
    \begin{split}
      \Hom_{\QC{X}}(\tilde{\pi}_{1,\times}(K^{\vee}),L)
      &\cong \Hom_{\Gamma_{\Delta}(\QC{X \times X})}(K^{\vee},\tilde{\pi}_1^*L) \\
      &\cong \Hom_{\Gamma_{\Delta}(\QC{X \times X})}(K^{\vee},\Gamma_{\Delta}\pi_1^*L) \\
      &\cong \Hom_{\QC{X \times X}}(K^{\vee},\pi_1^*L) \\
      &\cong \Hom_{\QC{X \times X}}(\mathcal{O}_{X \times X},K \otimes_{\mathcal{O}_{X \times X}}\pi_1^*L) \\
      &\cong \Hom_{\QC{X}}(\mathcal{O}_X,\pi_{1,*}(K \otimes_{\mathcal{O}_{X \times X}} \pi_1^*L)) \\
      &\cong \Hom_{\QC{X}}(\mathcal{O}_X,\pi_{1,*}K \otimes_{\mathcal{O}_X} L) \\
      &\cong \Hom_{\QC{X}}((\pi_{1,*}K)^{\vee},L) \\
      &\cong \Hom_{\QC{X}}((\tilde{\pi}_{1,*}K)^{\vee},L) \\
    \end{split}
  \end{equation*}
  where by abuse of notation $K$ can also be thought of as an object in $\QC{X \times_S X}$.
\end{proof}

\section{Comparison with Classical Definitions for Smooth Varieties}
In this section only, assume $S$ is a discrete (i.e. non-derived) Noetherian scheme, and $X$ is a smooth variety over $S$.
In this case, Grothendieck defined the ring (sheaf of rings) of Grothendieck differential operators on $X$ relative to $S$ (\cite{EGA4}).
We will show in this section that our definition agrees with this standard definition in this case. Moreover,
we will show that the dualizing complex is given by the sheaf of top differential forms homologically shifted
by the dimension of the variety, following Neeman \cite{N2}. Taken together, this yields a simple and powerful method for deducing Serre
duality from scratch.

We begin by showing that the ring of Grothendieck differential operators classically defined agrees with our definition. Strictly speaking,
we can't quite do that yet as we haven't said what the ring structure is in our case yet. However, we will see that at least locally, there is
an obvious choice, which agrees with the classical definition.
The following theorem is known, for example see \cite{SVdB}, but we provide a proof here as well.
\begin{theorem} \label{smoothagree}
  In the case of $X$ a smooth variety over a discrete Noetherian scheme $S$, our definition of $D_X$ agrees with the classical definition of Grothendieck
  differential operators (and hence is discrete).
\end{theorem}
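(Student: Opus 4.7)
The plan is to reduce to the affine case via étale locality, and then use smoothness to show that the derived local cohomology at the diagonal collapses to the classical formula for Grothendieck differential operators.

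First, using the étale descent result for $D_{X/S}$ (the corollary after Proposition \ref{prop14}) together with the fact that smoothness is étale-local, I would reduce to the case $X = \Spec{A}$, $S = \Spec{k}$ with $A$ smooth over the discrete Noetherian ring $k$. By Remark \ref{affinediff}, the author's definition in this affine setting becomes $D_{X/S} \cong \Gamma_\Delta \Hom_k(A,A)$, where $\Gamma_\Delta$ is derived and $\Hom_k(A,A)$ is viewed as an $A \otimes_k A$-module via the bimodule structure described there. Since $A$ is $k$-flat, the derived $\Hom_k(A,A)$ is concentrated in degree zero, so the only potential contribution to higher degrees can come from $\Gamma_\Delta$.

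Next, I would expand $\Gamma_\Delta$ using the formula $\Gamma_\Delta(M) \cong \colim_n \Hom_{A \otimes_k A}(A \otimes_k A/I^n, M)$, where $I$ is the kernel of the multiplication map $A \otimes_k A \to A$ and $\Hom$ is derived, giving
\[D_{X/S} \cong \colim_n \Hom_{A \otimes_k A}(A \otimes_k A/I^n, \Hom_k(A,A)).\]
On the underived level, the $n$-th term recovers $D^{(n-1)}$, the classical space of Grothendieck differential operators of order at most $n-1$, by the identification $D^{(n)} \cong V(I^{n+1})$ explained in the introduction. So taking the underived colimit gives exactly the classical sheaf of Grothendieck differential operators on $X$.

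The main step, and the principal obstacle, is to show that the higher derived Exts vanish, namely $\Ext^{>0}_{A \otimes_k A}(A \otimes_k A/I^n, \Hom_k(A,A)) = 0$ (at least after passing to the colimit in $n$), so that the derived colimit above agrees with its underived counterpart. This is where smoothness enters critically: it makes the diagonal a regular closed immersion of codimension $\dim A$, so that each $A \otimes_k A/I^n$ admits a well-controlled Koszul-style resolution over $A \otimes_k A$, and the required vanishing is precisely Proposition 2.2.1 of \cite{SVdB}, which I would cite. Combining this vanishing with the colimit computation above shows that $D_{X/S}$ is discrete and equal to the classical Grothendieck sheaf of differential operators, as desired.
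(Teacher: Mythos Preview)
Your proposal is correct and follows essentially the same route as the paper: reduce to the affine case, identify $D_{A/k}$ with $\colim_n \Hom_{A\otimes_k A}((A\otimes_k A)/I^n,\Hom_k(A,A))$, match the underived terms with the classical filtration $D^{(n)}$, and invoke smoothness for the higher Ext vanishing. The only real difference is in how the vanishing is obtained: you cite Proposition~2.2.1 of \cite{SVdB}, whereas the paper gives a short self-contained argument---using the adjunction $\Hom_{A\otimes_k A}((A\otimes_k A)/I^n,\Hom_k(A,A)) \cong \Hom_A((A\otimes_k A)/I^n,A)$ and the fact that, for $A$ smooth over $k$, each $(A\otimes_k A)/I^n$ is a finite projective $A$-module (noncanonically $\bigoplus_{i=0}^{n-1}\Sym^i\Omega_{A/k}$), so the derived Hom is already in degree zero before taking the colimit. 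The paper's version is slightly more elementary and transparent about where smoothness enters; your citation is perfectly legitimate but black-boxes this step.
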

\begin{proof}
  We will show that affine locally, there is a canonical isomorphism. This will then imply the global statement.

  Suppose we have $X \cong \Spec{R}$ smooth over $S \cong \Spec{k}$, both discrete Noetherian rings.
  Our definition in this case yields $D_{R/k} \cong \Gamma_{\Delta}(Hom_k(R,R))$ (see Definition \ref{affinediff}).
  We will take as the classical definition of the Grothendieck differential operators 
  \[\mathcal{D} := \bigcup_{n \ge 0} {D^{(n)}}\]
  the union of the increasing
  sequence of subspaces $D^{(n)} \subseteq \Hom_k(R,R)$ defined inductively by
  \[D^{(-1)}=0\]
  and
  \[D^{(n)}=\{f \in \Hom_k(R,R) | \forall r \in R,  [f,r] \in D^{(n-1)}\}\]
  where $r \in R$ is thought of as an element $\Hom_k(R,R)$ via multiplication by $r$.

  Now let $I$ be the ideal in $R \otimes_k R$ defining the diagonal. Recall that $\Hom_k(R,R)$
  has an action of $R \otimes_k R$ via
  \[((a_1 \otimes a_2)f)(x) = a_1f(a_2x)\]
  Therefore, the condition that \[\forall r \in R, [f,r] \in D^{(n-1)}\] is equivalent to
  \[\forall r \in R, (r \otimes 1)f - (1 \otimes r)f \in D^{(n-1)}\]
  which is further equivalent to
  \[If \in D^{(n-1)}\]
  Therefore, we can conclude that
  \[D^{(n)} \cong H^0\Hom_{R \otimes_k R}((R \otimes_k R)/I^n,\Hom_k(R,R))\]
  By adjunction we have
  \begin{equation*}
    \begin{split}
      \Hom_{R \otimes_k R}((R \otimes_k R)/I^n,\Hom_k(R,R))
      &\cong \Hom_{R \otimes_k R}((R \otimes_k R)/I^n,\Hom_R(R \otimes_k R,R)) \\
      &\cong \Hom_R((R \otimes R)/I^n,R)
    \end{split}
  \end{equation*}
  where the $R$ action is on the first factor of the tensor. However,
  because $R$ is smooth, we have a noncanonical isomorphism
  \[(R \otimes R)/I^n \cong \oplus_{i=0}^{n-1}(\Sym^k(\Omega_{R/k}))\]
  and hence
  \[D^{(n)} \cong \Hom_{R \otimes_k R}((R \otimes_k R)/I^n,\Hom_k(R,R))\]
  because the right hand side is concentrated in degree zero (since $(R \otimes_k R)/I^n$
  is projective).

  Therefore, as filtered colimits are exact
  \begin{equation*}
    \begin{split}
      \mathcal{D}
      &\cong \colim_n \Hom_{R \otimes_k R}((R \otimes_k R)/I^n,\Hom_k(R,R)) \\
      &\cong \Gamma_{\Delta}(\Hom_k(R,R)) \\
      &\cong D_{R/k}
    \end{split}
  \end{equation*}
\end{proof}
Now let us move on to verifying that our definition of the dualizing complex gives
the top differential forms in homological degree $n$ in the smooth setting.
The idea of the following proof is due to Lipman and is written in \cite{lipman}, it is also
presented in Section 3.2 of \cite{N2}.

The intermediate object connecting differential forms with $\omega_X$ is Hochschild homology, which can be written as
\[\HH(X/S) := \delta^*\delta_*\mathcal{O}_X \cong \tilde{\delta}^*\tilde{\delta}_*\mathcal{O}_X\]
Because of the isomorphism
\[\tilde{\pi}_{1,*}\tilde{\delta}_* \cong \id_{\QC{X}}\]
there is a natural map (by adjunction)
\[\tilde{\delta}_*\mathcal{O}_X \to \tilde{\pi}_1^{\times}\mathcal{O}_X\]
Therefore by applying $\tilde{\delta}^*$ on both sides there is a natural map
\[\HH(X/S) \to \omega_X\]
By the HKR isomorphism, in the smooth case, we also have a map
\[\Omega^n_{X/S}[n] \cong \pi_{\ge n}\HH(X/S) \to \HH(X/S)\]
where $\Omega^i_{X/S}$ is the sheaf of $i$-forms. These combine to form a natural map
\[\Omega^n_{X/S}[n] \to \omega_X\] in the smooth case. We wish to show it's an isomorphism.

By étale descent, it is enough to check it for $\mathbb{A}^n$.
Now, for $X,Y$ over $S$ satisfying our standing assumptions
\[\Gamma_{\Delta}(\QC{(X \times_S Y) \times_S (X \times_S Y)}) \cong \Gamma_{\Delta}(\QC{X \times_S X}) \otimes \Gamma_{\Delta}(\QC{Y \times_S Y})\]
and
\[\tilde{\pi}^{(X \times Y)}_{1,*} \cong \tilde{\pi}^{(X)}_{1,*} \boxtimes \tilde{\pi}^{(Y)}_{1,*}\]
therefore also
\[\tilde{\pi}_1^{\times, (X \times Y)} \cong \tilde{\pi}_1^{\times,(X)} \boxtimes \tilde{\pi}_1^{\times,(Y)}\]
Hence we have
\[D_{X \times Y} \cong D_X \boxtimes D_Y\]
Pulling back along the diagonal, we get
\[\omega_{X \times Y} \cong \omega_X \boxtimes \omega_Y\]
We also have similar results for Hochschild homology and $\Omega^n_{X/S}[n]$ compatible with the maps between
them. Therefore, it suffices to show the isomorphism for $\mathbb{A}^1$. So the result follows from

\begin{lemma}
  Over a Noetherian base scheme $S$ over $\Spec{\mathbb{Z}}$, 
  \[\omega_{\mathbb{A}^1/S} \cong \mathcal{O}_{\mathbb{A}^1/S}[1]\]
  and the map
  \[\mathcal{O}_{\mathbb{A}^1} \oplus \Omega_{\mathbb{A}^1}[1] \cong \HH(\mathbb{A}^1) \to \omega_{\mathbb{A}^1}\]
  is an isomorphism in degree $1$.
\end{lemma}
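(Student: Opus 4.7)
By étale descent it suffices to work affinely, so write $S = \Spec A$ and $\mathbb{A}^1_S = \Spec R$ with $R = A[x]$; then $R \otimes_A R = A[x,y]$ and the diagonal is cut out by the regular element $t := x - y$. Both $\omega_{\mathbb{A}^1/S} = \delta^*\pi_1^\times \mathcal{O}$ and $\HH(\mathbb{A}^1/S) = \delta^*\delta_*\mathcal{O}$ are $\delta^*$ applied to explicit $A[x,y]$-modules, and $\delta^*$ itself is given by the length-two Koszul resolution associated to $t$. So each becomes a two-term complex $[M \xrightarrow{t\cdot} M]$ in cohomological degrees $-1, 0$, and the lemma reduces to explicit computations of kernels, cokernels, and a comparison map.

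\textbf{Computing $\omega$.} I would first compute $\pi_1^\times \mathcal{O}_{\mathbb{A}^1} = \Hom_{A[x]}(A[x,y], A[x])$. Since $A[x,y]$ is free as an $A[x]$-module on $\{y^n\}_{n \ge 0}$, this identifies with the product $\prod_{n \ge 0} A[x] \cdot e_n$, where $e_n$ is dual to $y^n$; the induced $A[x,y]$-action has $y \cdot e_n = e_{n-1}$ (with $e_{-1} := 0$), while $x$ acts componentwise. Multiplication by $t$ then sends $(a_n)_n$ to $(x a_n - a_{n+1})_n$. This map is surjective (one recursively solves $a_{n+1} = x a_n - b_n$ starting from any $a_0$) with kernel exactly $\{(p, xp, x^2 p, \ldots) : p \in A[x]\} \cong A[x]$. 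Hence $\omega_{\mathbb{A}^1/S} \cong \mathcal{O}_{\mathbb{A}^1/S}[1]$.

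\textbf{Computing the comparison map.} For $\HH(\mathbb{A}^1/S)$ the same Koszul complex gives $[R \xrightarrow{t\cdot} R]$, but $t$ acts as zero on $\delta_* R$ (since $y$ acts through $x$), so $\HH(\mathbb{A}^1/S) \cong R \oplus R[1]$. The natural map $\delta_* \mathcal{O} \to \pi_1^\times \mathcal{O}$ is adjoint to the identity $\pi_{1,*}\delta_*\mathcal{O} = \mathcal{O} \to \mathcal{O}$. Unwinding this adjunction and using $A[x,y]$-linearity together with the relation $y \cdot r = x r$ in $\delta_* R$, I would show that $1 \in R$ is sent to $(1, x, x^2, \ldots)$ in the $e_n$-basis, and hence $r \in R$ to $(r, xr, x^2 r, \ldots)$. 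This is precisely the parameterization of $\ker(t\cdot)$ from the previous step, so the induced map on degree-$1$ cohomology is the identity of $A[x]$.

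\textbf{Main obstacle.} The main bookkeeping will be correctly pinning down the $A[x,y]$-module structure on the infinite product $\prod_n A[x] \, e_n$ (in particular the shift action of $y$) and the explicit formula for the unit map $\delta_* \mathcal{O} \to \pi_1^\times \mathcal{O}$ in this basis. Once these are in hand, both statements of the lemma follow by inspection of the differential $t\cdot$ on a single two-term Koszul complex.
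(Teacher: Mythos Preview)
Your argument is correct and follows the same Koszul-complex skeleton as the paper, but you apply it to a different module: you compute $\delta^*$ directly on the full $\pi_1^{\times}\mathcal{O} \cong \Hom_{A[x]}(A[x,y],A[x]) \cong \prod_{n\ge 0} A[x]\,e_n$, whereas the paper first reduces to $S=\Spec\mathbb{Z}$ by base-change and then applies $\tilde{\delta}^*$ to the diagonally-supported part $D_{\mathbb{Z}[x]} = \Gamma_{\Delta}\pi_1^{\times}\mathcal{O}$, which it has already identified (via Theorem~\ref{smoothagree}) with the free $\mathbb{Z}$-module on the divided-power operators $\frac{1}{n!}\frac{d^n}{dx^n}$. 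In both cases the resolution $[A[x,y]\xrightarrow{t}A[x,y]]$ of the diagonal turns the problem into computing kernel and cokernel of a single endomorphism; for the paper this is the commutator $[x,-]$ on $D_{\mathbb{Z}[x]}$, for you it is the shift-type map $(a_n)\mapsto (xa_n - a_{n+1})$ on the infinite product. Your route is a bit more self-contained (it does not need the identification of $D_X$ with classical differential operators, nor the reduction to $\mathbb{Z}$), while the paper's route keeps the computation inside its running theme and makes the surjectivity of $[x,-]$ transparent from the operator-theoretic description. The only point to be careful about in writing it up is exactly the one you flag: getting the $A[x,y]$-action on $\prod_n A[x]\,e_n$ right and checking that the unit $\delta_*\mathcal{O}\to\pi_1^{\times}\mathcal{O}$ sends $1$ to $(1,x,x^2,\ldots)$; once you note that this unit must land in $\ker(t\cdot)$ (by $A[x,y]$-linearity, since $t$ kills $\delta_*\mathcal{O}$) and have $e_0$-component $1$, this is forced.
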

\begin{proof}
  By base-change results, we can assume $S \cong \Spec{\mathbb{Z}}$.
  We have (by Definition \ref{dualizedef})
  \[\omega_{\mathbb{Z}[x]/\mathbb{Z}} \cong D_{\mathbb{Z}[x]/\mathbb{Z}} \otimes_{\mathbb{Z}[x_1,x_2]} \mathbb{Z}[x]\]
  where the map
  \[\mathbb{Z}[x_1,x_2] \to \mathbb{Z}[x]\]
  sends $x_1$ and $x_2$ to $x$.
  $\mathbb{Z}[x]$ has the following resolution over $\mathbb{Z}[x_1,x_2]$.
  \[\mathbb{Z}[x_1,x_2] \xrightarrow{(x_1-x_2) \cdot \_} \mathbb{Z}[x_1,x_2] \to \mathbb{Z}[x]\]
  Hence by tensoring with $D_{\mathbb{Z}[x]/\mathbb{Z}}$, we have the following exact triangle in $\QC{\mathbb{Z}[x_1,x_2]}$.
  \[D_{\mathbb{Z}[x]} \xrightarrow{[x,\_]} D_{\mathbb{Z}[x]} \to \omega_{\mathbb{Z}[x]}\]
  where the first map is conjugating by multiplication by $x$.

  By a direct computation $D_{\mathbb{Z}[x]/\mathbb{Z}}$ is a free $\mathbb{Z}$ module
  on the generators $\{\frac{1}{n!}\frac{d}{dx}\}_{n \ge 0}$.
  Therefore the map
  \[[x,\_] : D_{\mathbb{Z}[x]/\mathbb{Z}} \to D_{\mathbb{Z}[x]/\mathbb{Z}}\] is surjective and
  the kernel is just $\mathbb{Z}[x]$. Therefore,
  \[\omega_{\mathbb{Z}[x]} \cong \mathbb{Z}[x][1]\]
  Now, we also have the triangle (by the same resolution of $\mathbb{Z}[x]$ above)
  \[\mathbb{Z}[x] \xrightarrow{[x,\_]} \mathbb{Z}[x] \to \HH(\mathbb{Z}[x])\]
  which naturally maps to the triangle
  \[D_{\mathbb{Z}[x]} \xrightarrow{[x,\_]} D_{\mathbb{Z}[x]} \to \omega_{\mathbb{Z}[x]}\]
  The lemma follows from direct calculation.
\end{proof}

We have therefore shown
\begin{theorem}
  For $p_X: X \to S$ a smooth map of relative dimension $n$
  (where $S$ is a discrete scheme), there is a natural isomorphism
  \[\omega_{X/S} \cong \Omega^n_{X/S}[n]\]
\end{theorem}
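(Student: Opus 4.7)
The plan is to construct an explicit natural map $\Omega^n_{X/S}[n] \to \omega_{X/S}$ and then reduce the claim that it is an isomorphism to the single case $X = \mathbb{A}^1_S$ via étale descent and a Künneth-type splitting of both sides. First I would build the map: the unit of the adjunction $(\tilde\delta_*, \tilde\pi_{1,*})$ (which is in fact an equivalence after composition, since $\tilde{\pi}_{1,*}\tilde{\delta}_* \cong \id$) together with the counit-free construction gives $\tilde{\delta}_*\mathcal{O}_X \to \tilde{\pi}_1^{\times}\mathcal{O}_X$, and applying $\tilde{\delta}^*$ yields $\HH(X/S) \to \omega_{X/S}$. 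Composing with the top-degree projection of the HKR decomposition $\Omega^n_{X/S}[n] \simeq \pi_{\ge n}\HH(X/S) \to \HH(X/S)$ supplies the candidate map, which is natural in $X$ over $S$.

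Next I would establish the two reduction steps. Étale descent for $\omega$ is immediate from Proposition \ref{openshriek} (giving $u^*\omega_{X/S} \cong \omega_{U/S}$ for separated étale $u$), and the HKR construction and $\Omega^n_{X/S}[n]$ are likewise étale local, so it suffices to check the map on $\mathbb{A}^n_S$. For the reduction from $\mathbb{A}^n$ to $\mathbb{A}^1$, I would exploit the external product structure on $\Gamma_{\Delta}(\QC{(X\times_S Y)\times_S(X\times_S Y)}) \cong \Gamma_{\Delta}(\QC{X\times_S X}) \otimes_{\QC{S}} \Gamma_{\Delta}(\QC{Y\times_S Y})$ under which $\tilde\pi_{1,*}^{(X\times Y)}$ decomposes as an external product; by uniqueness of right adjoints the same holds for $\tilde\pi_1^{\times}$, giving $D_{X\times Y} \cong D_X \boxtimes D_Y$ and, after pulling back along $\delta$, $\omega_{X\times Y} \cong \omega_X \boxtimes \omega_Y$. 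Matching Künneth decompositions for $\HH$ and $\Omega^\bullet[n]$ reduce us to $\mathbb{A}^1/\Spec{\mathbb{Z}}$ after a base-change (Theorem \ref{shriekbase}).

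For the base case, I would resolve $\mathbb{Z}[x]$ over $\mathbb{Z}[x_1,x_2]$ by the two-term Koszul complex $\mathbb{Z}[x_1,x_2] \xrightarrow{x_1-x_2} \mathbb{Z}[x_1,x_2]$, tensor with $D_{\mathbb{Z}[x]}$ to obtain the exact triangle
\[
D_{\mathbb{Z}[x]} \xrightarrow{[x,-]} D_{\mathbb{Z}[x]} \to \omega_{\mathbb{Z}[x]},
\]
and use the explicit description of $D_{\mathbb{Z}[x]}$ as the free $\mathbb{Z}[x]$-module on divided power operators $\{\tfrac{1}{n!}(d/dx)^n\}$ to see that commutation with $x$ is surjective with kernel $\mathbb{Z}[x]$. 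Thus $\omega_{\mathbb{A}^1} \cong \mathcal{O}_{\mathbb{A}^1}[1]$. The same Koszul resolution computes $\HH(\mathbb{Z}[x])$ as $\mathbb{Z}[x] \oplus \mathbb{Z}[x][1]$, and comparing the two triangles via the natural map $\HH \to \omega$ shows that the degree-one part (which is the image of $\Omega^1_{\mathbb{A}^1}[1]$ under HKR) maps isomorphically to $\omega_{\mathbb{A}^1}$.

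The main obstacle will be bookkeeping the compatibility of all the Künneth decompositions with the constructed natural map, rather than any single computation: one needs the external product structure on $\Gamma_{\Delta}(\QC{(X\times Y)^2})$ to intertwine the three comparison arrows $\Omega^n[n] \to \HH \to \omega$ coherently, so that the reduction from $\mathbb{A}^n$ to $\mathbb{A}^1$ actually transports the statement of being an isomorphism. Once this coherence is in hand, the $\mathbb{A}^1$ case is a direct computation, and étale descent packages everything on a general smooth $X/S$.
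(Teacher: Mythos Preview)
Your proposal is correct and follows essentially the same route as the paper: construct the comparison map via $\HH(X/S) \to \omega_{X/S}$ and the HKR projection, reduce to $\mathbb{A}^n$ by étale descent, reduce to $\mathbb{A}^1$ via the external product decomposition $D_{X\times Y}\cong D_X\boxtimes D_Y$ (hence $\omega_{X\times Y}\cong\omega_X\boxtimes\omega_Y$), and finish by the Koszul computation of $\omega_{\mathbb{Z}[x]}$ from the triangle $D_{\mathbb{Z}[x]}\xrightarrow{[x,-]}D_{\mathbb{Z}[x]}\to\omega_{\mathbb{Z}[x]}$. The paper likewise flags the compatibility of the K\"unneth decompositions with the comparison maps as the point requiring care, so even your assessment of the main obstacle matches.
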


\appendix
\section{Background on Module Categories} \label{catresult}
In this section we collect some results in category theory which we will use throughout the document.

We denote by $Pr_{St}^L$ the $2$-category of presentable stable categories with colimit preserving functors.
By section 4.8.3 of \cite{HA}, there is a tensor product on $Pr_{St}^L$.
Therefore, let $\mathscr{V}$ be a monoidal presentable stable category, $\mathscr{X}$ a right $\mathscr{V}$ module
and $\mathscr{Y}$ a left $\mathscr{V}$ module (inside $Pr_{St}^L$). Then, using section
4.4 of \cite{HA}, we can form the relative tensor product of $\mathscr{X}$ and $\mathscr{Y}$ over $\mathscr{V}$, namely,
\[\mathscr{X} \otimes_{\mathscr{V}} \mathscr{Y}\]
We record two basic properties of this tensor product here for easy use later

\begin{theorem} \label{BC1}
  Suppose we have a functor $f : \mathscr{X} \to \mathscr{V}$ which is right $\mathscr{V}$-linear and colimit-preserving
  and $g: \mathscr{V} \to \mathscr{Y}$ which is left $\mathscr{V}$-linear and colimit-preserving.
  Then, the following diagram commutes
  \[
    \begin{tikzcd}
        \mathscr{X} \cong \mathscr{X} \otimes_{\mathscr{V}} \mathscr{V}  \arrow{r}{1 \otimes g} \arrow{dd}{f} & \mathscr{X} \otimes_{\mathscr{V}} \mathscr{Y} \arrow{dd}{f \otimes 1} \\ \\
        \mathscr{V} \arrow{r}{g} & \mathscr{Y} \cong \mathscr{V} \otimes_{\mathscr{V}} \mathscr{Y}\\
    \end{tikzcd}
  \]
\end{theorem}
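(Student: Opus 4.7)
The plan is to reduce the commutativity assertion to the bifunctoriality of the relative tensor product $\otimes_{\mathscr{V}}$ as a functor of two variables on (right and left) $\mathscr{V}$-module categories. Concretely, the machinery from Section 4.4 of \cite{HA} constructs $\otimes_{\mathscr{V}}$ as a bifunctor $\Mod{\mathscr{V}}^{R} \times \Mod{\mathscr{V}}^{L} \to Pr_{St}^L$, so given a pair of colimit-preserving $\mathscr{V}$-linear maps $(f, g)$, both compositions in any $2 \times 2$ interchange square are canonically identified with the single map $f \otimes_{\mathscr{V}} g$.

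First, I would rewrite the square using the canonical unit equivalences. The top map $1 \otimes g$ is, by definition, $\mathrm{id}_{\mathscr{X}} \otimes_{\mathscr{V}} g$, and the right map $f \otimes 1$ is $f \otimes_{\mathscr{V}} \mathrm{id}_{\mathscr{Y}}$. After the canonical identifications $\mathscr{V} \simeq \mathscr{V} \otimes_{\mathscr{V}} \mathscr{V}$ and $\mathscr{Y} \simeq \mathscr{V} \otimes_{\mathscr{V}} \mathscr{Y}$ (the left and right unit equivalences, which are natural in $\mathscr{V}$-linear maps), the left vertical $f$ becomes $f \otimes_{\mathscr{V}} \mathrm{id}_{\mathscr{V}}$ and the bottom horizontal $g$ becomes $\mathrm{id}_{\mathscr{V}} \otimes_{\mathscr{V}} g$. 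Naturality of the unit equivalences is what ensures these identifications are compatible, and no assumption beyond $\mathscr{V}$-linearity of $f$ and $g$ is needed.

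Second, the rewritten diagram is an interchange square for the bifunctor $\otimes_{\mathscr{V}}$:
\[
\begin{tikzcd}
\mathscr{X} \otimes_{\mathscr{V}} \mathscr{V} \arrow{r}{\mathrm{id} \otimes g} \arrow{d}[swap]{f \otimes \mathrm{id}} & \mathscr{X} \otimes_{\mathscr{V}} \mathscr{Y} \arrow{d}{f \otimes \mathrm{id}} \\
\mathscr{V} \otimes_{\mathscr{V}} \mathscr{V} \arrow{r}{\mathrm{id} \otimes g} & \mathscr{V} \otimes_{\mathscr{V}} \mathscr{Y}
\end{tikzcd}
\]
Both paths compose to $f \otimes_{\mathscr{V}} g$, because functoriality of $\otimes_{\mathscr{V}}$ in each slot separately forces $(f \otimes \mathrm{id}) \circ (\mathrm{id} \otimes g) \simeq f \otimes g \simeq (\mathrm{id} \otimes g) \circ (f \otimes \mathrm{id})$.

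The only potential subtlety, and the place one must be careful in an $\infty$-categorical setup, is that ``bifunctoriality'' here is a statement about a homotopy-coherent interchange isomorphism rather than a strict equality, so strictly speaking the proof should cite the construction of $\otimes_{\mathscr{V}}$ as a functor out of $\Mod{\mathscr{V}}^R \times \Mod{\mathscr{V}}^L$ (as assembled in \cite{HA}) rather than manipulate cells by hand. Once that is invoked, no further computation is required; the diagram commutes on the nose in the homotopy $2$-category of $Pr_{St}^L$.
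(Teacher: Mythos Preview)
Your proposal is correct and is essentially the same argument as the paper's, which simply says ``Follows from functoriality of the relative tensor product.'' You have spelled out what that one line means---rewriting the square via the unit equivalences and then invoking bifunctoriality of $\otimes_{\mathscr{V}}$---and your care about the $\infty$-categorical coherence is appropriate, but there is no real difference in approach.
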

\begin{proof}
  Follows from functoriality of the relative tensor product.
\end{proof}

\begin{remark}
  We must warn the reader that when we write a functor is $\mathscr{V}$-linear, we mean that it is a map of $\mathscr{V}$ module categories
  as defined above. Under the equivalence in \cite{MGS} A.3.8,
  a $\mathscr{V}$-module category can also be thought of as a presentable $\mathscr{V}$-category in the terminology of \textit{loc. cit.}.
  However, in the language of enriched category theory, the corresponding notion is referred to as preserving $\mathscr{V}$-tensors, and the terminology
  ``$\mathscr{V}$-linear'' is often used to refer to a weaker condition.
\end{remark}

\begin{remark}
  We can think of the above theorem as a category-theoretic analogue of the base-change isomorphism in algebraic geometry.
\end{remark}

\begin{theorem} \label{BC2}
  Suppose $f : \mathscr{V} \to \mathscr{X}$ is right $\mathscr{V}$-linear and colimit-preserving
  and $g: \mathscr{V} \to \mathscr{Y}$ is left $\mathscr{V}$-linear and colimit-preserving.
  Then, the following diagram commutes
  \[
    \begin{tikzcd}
        \mathscr{V} \arrow{r}{g} \arrow{dd}{f} & \mathscr{Y} \cong \mathscr{V} \otimes_{\mathscr{V}} \mathscr{Y} \arrow{dd}{f \otimes 1} \\ \\
        \mathscr{X} \cong \mathscr{X} \otimes_{\mathscr{V}} \mathscr{V} \arrow{r}{1 \otimes g} & \mathscr{X} \otimes_{\mathscr{V}} \mathscr{Y}\\
    \end{tikzcd}
  \]
\end{theorem}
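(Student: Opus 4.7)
The statement of Theorem \ref{BC2} is a direct categorical analogue of Theorem \ref{BC1}, with the only difference being that $f$ now emanates from $\mathscr{V}$ rather than landing in it. My plan is to prove it by the same mechanism, namely by appealing to the 2-functoriality of the relative tensor product $- \otimes_{\mathscr{V}} -$ on $\mathrm{Pr}^L_{St}$-modules over $\mathscr{V}$.

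First, I would unpack what each arrow in the square literally means. The right-$\mathscr{V}$-linearity of $f : \mathscr{V} \to \mathscr{X}$ is exactly what is required to define the induced colimit-preserving functor $f \otimes 1_{\mathscr{Y}} : \mathscr{V} \otimes_{\mathscr{V}} \mathscr{Y} \to \mathscr{X} \otimes_{\mathscr{V}} \mathscr{Y}$; similarly, left-$\mathscr{V}$-linearity of $g$ gives $1_{\mathscr{X}} \otimes g : \mathscr{X} \otimes_{\mathscr{V}} \mathscr{V} \to \mathscr{X} \otimes_{\mathscr{V}} \mathscr{Y}$. The canonical unit equivalences $\mathscr{V} \otimes_{\mathscr{V}} \mathscr{V} \simeq \mathscr{V}$, $\mathscr{X} \otimes_{\mathscr{V}} \mathscr{V} \simeq \mathscr{X}$, and $\mathscr{V} \otimes_{\mathscr{V}} \mathscr{Y} \simeq \mathscr{Y}$, which come from the module structures on $\mathscr{V}$, $\mathscr{X}$, and $\mathscr{Y}$, let us identify $g$ with $1_{\mathscr{V}} \otimes g$ and $f$ with $f \otimes 1_{\mathscr{V}}$.

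Next, I would observe that both compositions around the square, namely $(f \otimes 1_{\mathscr{Y}}) \circ (1_{\mathscr{V}} \otimes g)$ and $(1_{\mathscr{X}} \otimes g) \circ (f \otimes 1_{\mathscr{V}})$, are canonically equivalent to the single induced functor $f \otimes_{\mathscr{V}} g : \mathscr{V} \otimes_{\mathscr{V}} \mathscr{V} \to \mathscr{X} \otimes_{\mathscr{V}} \mathscr{Y}$. This is precisely the middle-four interchange for the bifunctor $\otimes_{\mathscr{V}}$, i.e. its 2-functoriality as established in Section 4.4 of \cite{HA}. Transporting along the unit equivalences then gives the square in the statement.

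The argument is routine and carries essentially no obstacle beyond the bookkeeping: the only thing to watch is that the unit equivalence $\mathscr{X} \otimes_{\mathscr{V}} \mathscr{V} \simeq \mathscr{X}$ uses the right $\mathscr{V}$-action, while $\mathscr{V} \otimes_{\mathscr{V}} \mathscr{Y} \simeq \mathscr{Y}$ uses the left $\mathscr{V}$-action, and that the handedness hypotheses on $f$ and $g$ are the minimal ones needed for $f \otimes 1$ and $1 \otimes g$ to be defined. As in Theorem \ref{BC1}, the one-line conclusion is that this follows from functoriality of the relative tensor product.
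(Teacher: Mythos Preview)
Your proposal is correct and follows exactly the same approach as the paper: the paper's proof is the single sentence ``Follows from functoriality of the relative tensor product,'' and your argument simply unpacks that assertion by invoking the bifunctoriality (middle-four interchange) of $\otimes_{\mathscr{V}}$ together with the unit equivalences.
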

\begin{proof}
  Follows from functoriality of the relative tensor product.
\end{proof}

\begin{theorem} \label{BC3}
  Suppose $f : \mathscr{X} \to \mathscr{V}$ is right $\mathscr{V}$-linear and colimit-preserving
  and $g: \mathscr{Y} \to \mathscr{V}$ is left $\mathscr{V}$-linear and colimit-preserving.
  Then, the following diagram commutes
  \[
    \begin{tikzcd}
        \mathscr{X} \otimes_{\mathscr{V}} \mathscr{Y} \arrow{r}{f \otimes 1} \arrow{dd}{1 \otimes g} & \mathscr{Y} \cong \mathscr{V} \otimes_{\mathscr{V}} \mathscr{Y} \arrow{dd}{g} \\ \\
        \mathscr{X} \cong \mathscr{X} \otimes_{\mathscr{V}} \mathscr{V} \arrow{r}{f} & \mathscr{V}\\
    \end{tikzcd}
  \]
\end{theorem}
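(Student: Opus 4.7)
The plan is to identify both legs of the square with a single functor
\[f \otimes_{\mathscr{V}} g : \mathscr{X} \otimes_{\mathscr{V}} \mathscr{Y} \to \mathscr{V} \otimes_{\mathscr{V}} \mathscr{V}\]
obtained by tensoring the two given morphisms together, and then collapsing $\mathscr{V} \otimes_{\mathscr{V}} \mathscr{V}$ via the unitor. Since $f$ is a morphism of right $\mathscr{V}$-modules and $g$ is a morphism of left $\mathscr{V}$-modules (both colimit-preserving), the construction of the relative tensor product from Section 4.4 of \cite{HA}, viewed as a bifunctor on the appropriate module categories inside $Pr_{St}^L$, produces $f \otimes_{\mathscr{V}} g$ directly.

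The key observation is that this functor admits two canonical factorizations, one through $\mathscr{V} \otimes_{\mathscr{V}} \mathscr{Y}$ and one through $\mathscr{X} \otimes_{\mathscr{V}} \mathscr{V}$: by the interchange law for the bifunctor $(-) \otimes_{\mathscr{V}} (-)$, we have
\[(f \otimes 1_{\mathscr{V}}) \circ (1_{\mathscr{X}} \otimes g) \cong f \otimes_{\mathscr{V}} g \cong (1_{\mathscr{V}} \otimes g) \circ (f \otimes 1_{\mathscr{Y}}).\]
Post-composing with the unitor equivalence $\mu : \mathscr{V} \otimes_{\mathscr{V}} \mathscr{V} \cong \mathscr{V}$ then produces the two composites appearing in the diagram, provided one identifies $\mu \circ (f \otimes 1_{\mathscr{V}})$ with $f$ under the equivalence $\mathscr{X} \otimes_{\mathscr{V}} \mathscr{V} \cong \mathscr{X}$, and $\mu \circ (1_{\mathscr{V}} \otimes g)$ with $g$ under $\mathscr{V} \otimes_{\mathscr{V}} \mathscr{Y} \cong \mathscr{Y}$. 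Both of these are instances of naturality of the unit equivalences for the respective one-sided module structures.

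Put together, both legs of the square equal $\mu \circ (f \otimes_{\mathscr{V}} g)$, which establishes commutativity. I do not expect any substantive obstacle here: this is a purely formal consequence of the bifunctoriality of the relative tensor product in $Pr_{St}^L$, and it proceeds along exactly the same lines as Theorems \ref{BC1} and \ref{BC2}. The only point requiring care is verifying that the two unitor identifications used implicitly at the corners of the diagram are compatible with $\mu$ in the manner claimed above, but this is packaged into the coherence data of $\mathscr{V}$ as a monoidal object of $Pr_{St}^L$.
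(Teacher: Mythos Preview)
Your argument is correct and is essentially the same as the paper's: the paper's proof is the one-line ``Follows from functoriality of the relative tensor product,'' and your proposal simply unpacks this into the interchange law for the bifunctor $(-)\otimes_{\mathscr{V}}(-)$ together with naturality of the unitors. There is no meaningful difference in approach.
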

\begin{proof}
  Follows from functoriality of the relative tensor product.
\end{proof}

Now let $\mathscr{V}$ be a symmetric monoidal compactly generated stable category,
such that the compact objects are the same as the dualizable objects. This will happen
whenever $\mathscr{V}$ is the category of quasicoherent sheaves on a (qcqs) spectral scheme
by Proposition 6.2.6.2 of \cite{SAG}. Let us consider
the $2$-category of presentable stable category $\mathscr{X}$ with a colimit-preserving left action of $\mathscr{V}$
where the morphisms are colimit preserving functors which preserve the $\mathscr{V}$ action. We denote this
$2$-category by $\Mod{\mathscr{V}}^L$ and we call its objects $\mathscr{V}$-module categories. In this setting,
there are enriched forms of adjoint functor theorems.

\begin{theorem} \label{Proj1}
  Let $\mathscr{X}$ and $\mathscr{Y}$ be compactly generated $\mathscr{V}$-module categories.
  Suppose $f : \mathscr{X} \to \mathscr{Y}$ is a colimit-preserving functor between $\mathscr{V}$-module categories which preserves compact objects,
  then the right adjoint $g$ can be upgraded to a $\mathscr{V}$-linear functor.
  (Please note that the assumptions on $\mathscr{V}$ in this theorem are stronger than the beginning of the section, see the previous paragraph)
\end{theorem}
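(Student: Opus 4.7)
The plan is to produce a $\mathscr{V}$-linear structure on $g$ by first constructing a lax $\mathscr{V}$-linearity natural transformation and then verifying it is an equivalence. First I would observe that since $f$ preserves compact objects and both $\mathscr{X}$ and $\mathscr{Y}$ are compactly generated stable categories, the right adjoint $g$ preserves filtered colimits, and hence (being also a right adjoint in a stable setting) preserves all colimits. Next, from the $\mathscr{V}$-linearity of $f$, I would extract the canonical lax structure map
\[
\alpha_{v,y} : v \otimes g(y) \longrightarrow g(v \otimes y),
\]
obtained as the adjoint of the composite $f(v \otimes g(y)) \cong v \otimes f(g(y)) \xrightarrow{v \otimes \epsilon_y} v \otimes y$, where $\epsilon$ is the counit of the adjunction $f \dashv g$.

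The next step is to show $\alpha_{v,y}$ is an equivalence when $v$ is dualizable in $\mathscr{V}$. Here the action endofunctor $v \otimes (-)$ on both $\mathscr{X}$ and $\mathscr{Y}$ admits a right adjoint given by $v^{\vee} \otimes (-)$, and by $\mathscr{V}$-linearity of $f$ we have $f(v^{\vee} \otimes x) \cong v^{\vee} \otimes f(x)$. A Yoneda-style computation then gives, naturally in $x$,
\[
\Hom_{\mathscr{X}}(x, g(v \otimes y)) \cong \Hom_{\mathscr{Y}}(v^{\vee} \otimes f(x), y) \cong \Hom_{\mathscr{X}}(x, v \otimes g(y)),
\]
so $\alpha_{v,y}$ is an equivalence for dualizable $v$. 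To extend this to arbitrary $v$, I would use the hypothesis on $\mathscr{V}$: compact and dualizable objects coincide, so every $v$ is a colimit of dualizable objects $v_i$. Then $\alpha_{v,y}$ is obtained as a colimit of the $\alpha_{v_i,y}$: the left-hand side $v \otimes g(y)$ is colimit-preserving in $v$ since $-\otimes g(y)$ is, and the right-hand side $g(v \otimes y)$ is colimit-preserving in $v$ since both $-\otimes y$ and $g$ preserve colimits (by the first step). A colimit of equivalences is an equivalence, so $\alpha_{v,y}$ is an equivalence for all $v \in \mathscr{V}$.

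Finally, I would upgrade these pointwise equivalences to a genuinely $\mathscr{V}$-linear structure on $g$. The cleanest way is to package everything inside $\Mod{\mathscr{V}}^L$ directly: the adjunction $f \dashv g$ in $\mathrm{Pr}^L_{St}$ can be promoted to an adjunction in the $2$-category $\Mod{\mathscr{V}}^L$ once we know that the lax $\mathscr{V}$-linear structure on $g$ is strict, which is exactly what $\alpha$ being an equivalence encodes. Equivalently, one may invoke the characterization of right adjoints inside $\Mod{\mathscr{V}}^L$: a colimit-preserving $\mathscr{V}$-linear $f$ admits a $\mathscr{V}$-linear right adjoint if and only if its underlying right adjoint preserves $\mathscr{V}$-tensors, which is what the preceding paragraphs verify.

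The main obstacle is the last step—passing from a coherent family of pointwise equivalences $\alpha_{v,y}$ to actual coherent $\mathscr{V}$-linearity data. In the $\infty$-categorical setting this requires either appealing to a general result about lifting adjunctions along forgetful functors of presentable $2$-categories (as in Lurie's treatment in \cite{HA}), or constructing the lax structure at the level of the bar construction computing $\mathscr{X} \otimes_{\mathscr{V}} (-)$ and checking it is coherent; the arguments above provide exactly the input these formal results require.
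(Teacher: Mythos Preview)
Your proposal is correct and follows the standard route for this type of result. The paper itself does not supply an argument here: its entire proof is the citation ``See A.3.6 in \cite{MGS}.'' Your sketch---construct the lax $\mathscr{V}$-linear comparison map on the right adjoint, verify it is an equivalence on dualizable $v$ via the adjunction $v\otimes(-)\dashv v^{\vee}\otimes(-)$, then extend to all $v$ using that compact objects coincide with dualizable ones in $\mathscr{V}$ together with colimit-preservation of $g$---is precisely the expected content of such a reference, so there is no meaningful difference of approach to report. You also correctly isolate the one genuinely delicate point, namely promoting the pointwise equivalences $\alpha_{v,y}$ to coherent $\mathscr{V}$-linear data, and you name the appropriate resolution (lifting the adjunction to $\Mod{\mathscr{V}}^L$ via the general machinery in \cite{HA} or \cite{MGS}).
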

\begin{proof}
  See A.3.6 in \cite{MGS}
\end{proof}

\begin{remark}
  We can think of the above as a category-theoretic analogue of the projection formula in algebraic geometry.
\end{remark}

\begin{theorem} \label{Proj2}
  Suppose $g : \mathscr{X} \to \mathscr{Y}$ is a colimit-preserving functor between $\mathscr{V}$-module categories which preserves limits,
  then the left adjoint $f$ can be upgraded to a $\mathscr{V}$-linear functor.
  (Please note that the assumptions on $\mathscr{V}$ in this theorem are stronger than the beginning of the section, see the paragraph before Theorem \ref{Proj1})

\end{theorem}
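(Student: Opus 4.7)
The plan is to adapt the proof of Theorem \ref{Proj1}, replacing the use of compact-object preservation with the observation that, for every dualizable (equivalently, compact) $v \in \mathscr{V}$, the endofunctor $v \otimes -$ of any $\mathscr{V}$-module category admits $v^\vee \otimes -$ as both a left and a right adjoint. This gives enough slack to transport the $\mathscr{V}$-linearity of $g$ through the adjunction $f \dashv g$.

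First I would construct, for each dualizable $v \in \mathscr{V}$ and each $x \in \mathscr{X}$, a natural isomorphism $f(v \otimes x) \cong v \otimes f(x)$. For any $y \in \mathscr{Y}$ one has the chain
\[\Hom_{\mathscr{Y}}(v \otimes f(x), y) \cong \Hom_{\mathscr{Y}}(f(x), v^\vee \otimes y) \cong \Hom_{\mathscr{X}}(x, g(v^\vee \otimes y)) \cong \Hom_{\mathscr{X}}(x, v^\vee \otimes g(y)) \cong \Hom_{\mathscr{X}}(v \otimes x, g(y)) \cong \Hom_{\mathscr{Y}}(f(v \otimes x), y),\]
where the middle isomorphism uses $\mathscr{V}$-linearity of $g$ and the remaining four use either the tensor-dual adjunction or $f \dashv g$; Yoneda then supplies the desired natural isomorphism. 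Next I would extend this to arbitrary $v \in \mathscr{V}$: since $\mathscr{V}$ is compactly generated by its dualizables, every $v$ is a colimit of such, and both sides of the isomorphism are colimit-preserving in $v$ because $f$ and the $\mathscr{V}$-actions are all colimit-preserving.

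The main obstacle will be promoting these pointwise isomorphisms to a coherent $\mathscr{V}$-linear structure on $f$ in $\Mod{\mathscr{V}}^L$, rather than the weaker data of naive natural isomorphisms. At the $(\infty, 2)$-categorical level this amounts to exhibiting unit and counit $2$-cells for the adjunction inside $\Mod{\mathscr{V}}^L$ and checking the triangle identities; once the pointwise calculation above is in hand, all higher coherences should be inherited from the $\mathscr{V}$-linear structure of $g$ via the adjunction. A clean way to package everything is to cite an enriched adjoint functor theorem in the spirit of \cite{MGS} A.3.6: for $g$ a colimit-preserving $\mathscr{V}$-linear functor between $\mathscr{V}$-module categories, admitting a left adjoint in $\Mod{\mathscr{V}}^L$ is equivalent to admitting one in the underlying $Pr_{St}^L$, which holds since $g$ preserves limits.
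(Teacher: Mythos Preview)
Your proposal is correct and ultimately converges on exactly what the paper does: the paper's proof is a one-line citation to \cite{MGS} A.3.6, and you end with the same reference. The additional pointwise argument you sketch (using that $v \otimes -$ has $v^{\vee} \otimes -$ as both adjoints when $v$ is dualizable, then extending by colimits) is the right intuition and is essentially the content behind that reference, but the paper does not spell any of it out.
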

\begin{proof}
  See A.3.6 in \cite{MGS}
\end{proof}

\section{Exact Sequences of Categories}
In this section, we record what it means for a sequence of categories to be exact or split-exact. We use \cite{BGT} as our reference.
Note that our definition for split-exactness differs from theirs.

\begin{definition}[Definition 5.4 of \cite{BGT}]
  Let $f : \mathscr{A} \to \mathscr{B}$ be a fully faithful functor of presentable stable
  categories (this implies that $f$ preserves colimits). The Verdier quotient $\mathscr{B}/\mathscr{A}$ of $\mathscr{B}$
  by $\mathscr{A}$ is the cofiber of $f$ in the category $Pr_{St}^L$ of presentable stable categories.
\end{definition}

\begin{definition}[Definition 5.8 of \cite{BGT}]
  A sequence of presentable stable categories
  \[\mathscr{A} \to \mathscr{B} \to \mathscr{C}\]
  is exact if the composite is trivial, $\mathscr{A} \to \mathscr{B}$ is fully faithful, and the map $\mathscr{B}/\mathscr{A} \to \mathscr{C}$ is an equivalence.
\end{definition}

\begin{definition} \label{splitexact}
  An exact sequence of presentable stable categories
  \[\mathscr{A} \xrightarrow{f} \mathscr{B} \xrightarrow{g} \mathscr{C}\]
  is split-exact if the are colimit-preserving right adjoints $i$ and $j$ (to $f$ and $g$ respectively)
  such that $i \circ f = \id$ and $g \circ j = \id$.
\end{definition}
\begin{remark}
  The reader is warned that this definition differs from that of \cite{BGT} Definition 5.18.
\end{remark}

\begin{lemma}
  Suppose 
  \[\mathscr{A} \xrightarrow{f} \mathscr{B} \xrightarrow{g} \mathscr{C}\]
  is a split-exact sequence of presentable stable categories.
  Let $i$ and $j$ be the right adjoints of $f$ and $g$ respectively.
  Then, $j$ is fully faithful and for $\mathcal{M} \in \mathscr{B}$
  \[fi\mathcal{M} \to \mathcal{M} \to jg\mathcal{M}\]
  is exact in $\mathscr{B}$.
\end{lemma}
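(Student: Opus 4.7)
Full faithfulness of $j$ is immediate: the hypothesis $gj \cong \id$ says the counit of the $(g,j)$-adjunction is an isomorphism, which is precisely the statement that $j$ is fully faithful.

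For the main claim, the plan is to construct a natural comparison map $\alpha : \mathcal{N} \to jg\mathcal{M}$ with $\mathcal{N} := \mathrm{cofib}(fi\mathcal{M} \to \mathcal{M})$, and then check it is an equivalence. The composite $fi\mathcal{M} \to \mathcal{M} \to jg\mathcal{M}$ corresponds under the $(g,j)$-adjunction to the map $gfi\mathcal{M} \to g\mathcal{M}$, which vanishes since $gf \cong 0$ by exactness; hence $\mathcal{M} \to jg\mathcal{M}$ factors through the cofiber to give $\alpha$. For later use I would also record that $ij \cong 0$: for $a \in \mathscr{A}$ and $C \in \mathscr{C}$, $\Hom(a, ijC) \cong \Hom(fa, jC) \cong \Hom(gfa, C) = 0$.

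Next I would show $\alpha$ induces isomorphisms after applying both $g$ and $i$. Applying $g$ to the cofiber triangle $fi\mathcal{M} \to \mathcal{M} \to \mathcal{N}$ and using $gf \cong 0$ yields $g\mathcal{N} \cong g\mathcal{M}$; under this identification $g\alpha$ becomes the unit map $g\mathcal{M} \to gjg\mathcal{M}$, which is an isomorphism by the triangle identity together with $gj \cong \id$. Applying $i$ to the same triangle, the triangle identity combined with $if \cong \id$ shows that $ifi\mathcal{M} \to i\mathcal{M}$ is an isomorphism, so $i\mathcal{N} \cong 0$; combined with the computation $ijg\mathcal{M} \cong 0$ above, $i\alpha$ is trivially an equivalence.

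Finally, I would conclude by showing $\mathcal{K} := \mathrm{cofib}(\alpha) = 0$. The previous step gives $g\mathcal{K} \cong 0$ and $i\mathcal{K} \cong 0$. Since the sequence is exact in $Pr_{St}^L$, the fully faithful functor $f$ identifies $\mathscr{A}$ with the kernel of $g$ (a standard property of Verdier sequences, reflecting the fact that fiber and cofiber sequences agree in this setting); thus $\mathcal{K} \cong fa$ for some $a \in \mathscr{A}$, and then $a \cong ifa \cong i\mathcal{K} \cong 0$, forcing $\mathcal{K} \cong 0$. The main obstacle is precisely this identification $\ker(g) \simeq f(\mathscr{A})$; once it is granted, the rest of the argument is formal bookkeeping with adjunctions and triangle identities.
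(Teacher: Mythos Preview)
Your proof is correct and follows essentially the same route as the paper's. The paper takes the fibre $K$ of the unit $\mathcal{M}\to jg\mathcal{M}$, uses $\ker(g)=\mathrm{im}(f)$ to write $K\cong fiK$, and then applies $fi$ to the triangle (together with $ij=0$) to identify $K$ with $fi\mathcal{M}$; you instead take the cofibre of the counit $fi\mathcal{M}\to\mathcal{M}$ and compare it to $jg\mathcal{M}$, but the decisive inputs are identical: the identification $\ker(g)\simeq f(\mathscr{A})$ and the vanishing $ij\cong 0$, which you justify more explicitly than the paper does.
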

\begin{proof}
  $g \circ j = \id$ implies $j$ is fully faithful.
  Consider the fibre $K$ of $\mathcal{M} \to jg\mathcal{M}$. It is easy to see that it is in the kernel
  of $g$, and hence the image of $f$. Hence we have $K \cong fiK$ and therefore $K$ is the fibre of
  $fi\mathcal{M} \to fijg\mathcal{M}$. Since $ij=0$, we conclude $K \cong fi\mathcal{M}$.
\end{proof}
In the reverse direction, we have
\begin{lemma} \label{sequel}
  Suppose 
  \begin{equation} \label{seque}
    \mathscr{A} \xrightarrow{f} \mathscr{B} \xrightarrow{g} \mathscr{C}
  \end{equation}
  is a sequence of presentable stable categories which composes to zero, where $f$ and $g$ are colimit-preserving.
  Let $i$ and $j$ be the right adjoints of $f$ and $g$ respectively.
  If $i$ and $j$ also preserve colimits and $i \circ f = \id$ and $g \circ j = \id$
  and for any $\mathcal{M} \in \mathscr{B}$, the sequence 
  \[fi\mathcal{M} \to \mathcal{M} \to jg\mathcal{M}\]
  is exact in $\mathscr{B}$, then the sequence (\ref{seque}) is split-exact.
\end{lemma}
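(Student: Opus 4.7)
The plan is to verify that the given sequence is exact in $Pr_{St}^L$, which combined with the already-assumed conditions on $i$ and $j$ yields split-exactness per Definition \ref{splitexact}. Three items must be checked: the composite vanishes ($g \circ f \cong 0$ is given), $f$ is fully faithful, and the induced functor $\bar{g} : \mathscr{B}/\mathscr{A} \to \mathscr{C}$ is an equivalence. Full faithfulness of $f$ is immediate from $i \circ f \cong \id$ via the adjunction:
\[
\Hom_{\mathscr{B}}(fa, fa') \cong \Hom_{\mathscr{A}}(a, ifa') \cong \Hom_{\mathscr{A}}(a, a').
\]

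For the main step I would exhibit an explicit inverse to $\bar{g}$. The universal property of the cofiber in $Pr_{St}^L$, applied to $g \circ f \cong 0$, produces a unique colimit-preserving functor $\bar{g} : \mathscr{B}/\mathscr{A} \to \mathscr{C}$ with $\bar{g} \circ q \cong g$, where $q : \mathscr{B} \to \mathscr{B}/\mathscr{A}$ is the quotient functor. I set $h := q \circ j$ as the candidate inverse. The composite $\bar{g} \circ h \cong g \circ j \cong \id_{\mathscr{C}}$ is immediate from the hypothesis $g \circ j \cong \id$. For the reverse composite, the exact triangle hypothesis enters crucially: applying the colimit-preserving (hence exact) functor $q$ to the fiber sequence $fi\mathcal{M} \to \mathcal{M} \to jg\mathcal{M}$ and using $q \circ f \cong 0$ (by the defining property of the cofiber) kills the first term, yielding a natural equivalence $q(\mathcal{M}) \cong q(jg\mathcal{M}) = h(\bar{g}(q\mathcal{M}))$ for every $\mathcal{M} \in \mathscr{B}$. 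Since $q$ is essentially surjective (the cofiber in $Pr_{St}^L$ may be realized as a Bousfield localization at the morphisms whose fiber lies in $f(\mathscr{A})$, and so is surjective on objects), and since both $h \circ \bar{g}$ and $\id_{\mathscr{B}/\mathscr{A}}$ preserve colimits, this natural equivalence on the image of $q$ extends uniquely to a natural equivalence $h \circ \bar{g} \cong \id_{\mathscr{B}/\mathscr{A}}$.

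The main obstacle is largely the bookkeeping of the $\infty$-categorical universal property of the Verdier quotient in $Pr_{St}^L$; the actual algebraic content is entirely concentrated in the exact triangle hypothesis, which decomposes every $\mathcal{M}$ into an ``$\mathscr{A}$-part'' $fi\mathcal{M}$ (killed by $q$) and a ``$\mathscr{C}$-part'' $jg\mathcal{M}$ (in the essential image of $j$), making the semiorthogonal splitting manifest.
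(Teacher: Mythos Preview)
Your proof is correct, and the underlying idea---using the fiber sequence $fi\mathcal{M}\to\mathcal{M}\to jg\mathcal{M}$ to identify the image of any $\mathcal{M}$ in the quotient with $jg\mathcal{M}$---is exactly the content of the paper's argument as well. The route differs slightly: the paper does not form the abstract cofiber $\mathscr{B}/\mathscr{A}$ at all, but instead verifies directly that $g:\mathscr{B}\to\mathscr{C}$ already enjoys the universal property of the cofiber. Concretely, given any colimit-preserving $H:\mathscr{B}\to\mathscr{D}$ vanishing on $\mathscr{A}$, applying $H$ to the triangle yields $H\cong Hjg$, so $H$ factors (essentially uniquely) as $(Hj)\circ g$. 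Your version instead builds $q:\mathscr{B}\to\mathscr{B}/\mathscr{A}$, obtains the comparison $\bar g$, and writes down the inverse $q\circ j$ explicitly. The paper's approach is marginally more economical since it never needs to invoke properties of the abstract quotient (essential surjectivity of $q$, or that precomposition with $q$ is fully faithful on functor categories---which is what you implicitly use in your last step to pass from $h\bar g q\cong q$ to $h\bar g\cong\id$). Your approach, on the other hand, makes the inverse equivalence visible as an actual functor, which can be useful if one later wants to compute with it.
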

\begin{proof}
  $i \circ f = \id$ and $g \circ j = \id$
  guarantees that $f$ and $j$ are fully-faithful. It remains to check that
  \[\mathscr{B}/\mathscr{A} \to \mathscr{C}\]
  is an equivalence. Suppose
  \[H : \mathscr{B} \to \mathscr{D}\]
  is a colimit-preserving functor which vanishes on $\mathscr{A}$. Then,
  using the sequence
  \[fi\mathcal{M} \to \mathcal{M} \to jg\mathcal{M}\]
  one can easily check that
  \[H \cong Hjg\]
  and hence there is a unique functor $F :\mathscr{C} \to \mathscr{D}$,
  namely $F\cong Hj$, such that $H$ factors as the projection functor $g$ composed with
  $F$. Therefore $\mathscr{C}$ is the desired cofibre.
\end{proof}
We note that in \cite{highertrace}, this is taken as definition for a split-exact sequence.

\begin{remark} \label{tensorref}
  The above lemma is very useful as it provides a purely 2-categorical way
  to check if a sequence is split-exact. For example, it implies that tensoring a split-exact
  sequence of module categories with another module category gives another split-exact sequence.
  More generally, after defining the notion of split-exactness in a purely 2-categorical way using the
  above lemma, it will be preserved under functors between presentable 2-categories, in the language of
  \cite{Stef}. We may also sometimes refer to split-exact sequences as Bousfield localization sequences.
\end{remark}
\section{Some Facts about Closed Immersions} \label{closedimmer}
In this section, we record some algebraic geometry facts about closed immersions of spectral Noetherian schemes which we need in the text.
We adopt the terminology of \cite{SAG}.
Let $z: Z \to X$ be a closed immersion of homologically bounded, spectral Noetherian schemes.

Let $\Gamma_{Z}(\QC{X})$ be the subcategory of
$\QC{X}$ consisting of sheaves supported on $Z$
(i.e. $j^*\mathcal{F} = 0$ where $j$ is the inclusion of the complement open).
Let
\[i_{Z} : \Gamma_{Z}(\QC{X}) \to \QC{X}\]
denote the inclusion functor of that subcategory. Let
\[\Gamma_{Z} : \QC{X} \to \Gamma_{Z}(\QC{X})\]
be the right adjoint of $i_{Z}$, or equivalently the left Kan extension
of the identity functor on $\Gamma_{Z}(\QC{X})$ to the entirety of $\QC{X}$.
$i_{Z}$ preserves compact objects since $\Gamma_{Z}$ is colimit-preserving.
We think of $\Gamma_{Z}$ as the functor taking a quasicoherent sheaf on $X$
to its local cohomology with respect to $Z$. Suppose $U$ is the complement open of $Z$,
then the following is a split-exact sequence of categories (see Definition \ref{splitexact})
\begin{equation} \label{spliseq}
  \Gamma_{Z}(\QC{X}) \to \QC{X} \to \QC{U}
\end{equation}

\begin{lemma} \label{compactgen}
  For $Z$ as above, the category $\Gamma_{Z}(\QC{X})$ is compactly generated.
\end{lemma}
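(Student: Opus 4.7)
The plan is to apply a Neeman--Thomason-style localization argument to the split-exact sequence
\[\Gamma_{Z}(\QC{X}) \to \QC{X} \xrightarrow{j^*} \QC{U}\]
recorded in \eqref{spliseq}. Two inputs are needed: (i) both $\QC{X}$ and $\QC{U}$ are compactly generated, with compact objects the perfect complexes, and (ii) $j^*$ preserves compact objects. Input (i) follows from $X$ (and hence its open $U$) being a homologically bounded Noetherian spectral scheme, by the standard theory of perfect stacks recorded in \cite{SAG}. Input (ii) is immediate: since $j$ is flat, pullback along $j$ sends perfect complexes to perfect complexes.

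Given these inputs, I would invoke the following general principle: in a split-exact sequence $\mathscr{A} \to \mathscr{B} \to \mathscr{C}$ of presentable stable categories with $\mathscr{B}$ and $\mathscr{C}$ compactly generated and the right-hand map preserving compact objects, the kernel $\mathscr{A}$ is compactly generated. A set of compact generators can be manufactured as fibers of maps $P \to Q$ between compact generators of $\mathscr{B}$ whose images $j^*P \to j^*Q$ are equivalences in $\mathscr{C}$. In our situation, this says that $\Gamma_{Z}(\QC{X})$ is compactly generated by perfect complexes on $X$ whose restriction to $U$ vanishes, i.e.\ by perfect complexes set-theoretically supported on $Z$. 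Consistency with the earlier observation that $i_Z$ preserves compact objects (since $\Gamma_Z$ is colimit-preserving) is automatic: any perfect complex on $X$ supported on $Z$ is automatically compact once viewed in $\Gamma_{Z}(\QC{X})$.

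The main substantive content that I would expect to verify in detail, should one wish to avoid invoking Neeman--Thomason as a black box, is that the candidate compact objects really do generate. Locally, this can be carried out with Koszul complexes: on an affine open $\Spec A \subset X$ where $Z$ is cut out by an ideal $I = (f_1, \ldots, f_n)$, the Koszul complexes $K(f_1^m, \ldots, f_n^m)$ for varying $m$ are perfect complexes set-theoretically supported on $V(I)$, and together they generate $\Gamma_{V(I)}(\QC{\Spec A})$ since any $I$-locally-nilpotent module is a filtered colimit of $\Hom_A(A/I^m, -)$ of itself. Gluing these over a finite affine cover of $X$ (existing by quasi-compactness and the Noetherian hypothesis) produces the required compact generators globally. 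The Noetherian hypothesis is used precisely to ensure the defining ideals are finitely generated so that Koszul complexes are available, which is the only place I expect the hypotheses to be used in any essential way.
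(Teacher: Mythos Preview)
Your ``general principle''---that in a split-exact sequence $\mathscr{A}\to\mathscr{B}\to\mathscr{C}$ with $\mathscr{B},\mathscr{C}$ compactly generated and the right map compact-preserving, the kernel $\mathscr{A}$ is compactly generated---is \emph{false}. What you have written down is precisely the telescope conjecture for $\mathscr{B}$: the hypothesis that $j^*$ preserves compacts is equivalent to the localization being smashing, and asking that the acyclics then be compactly generated is asking that every smashing localization be finite. This fails already for $\mathscr{B}=\Sp$ by the recent work of Burklund--Hahn--Levy--Schlank. The Neeman--Thomason theorem runs in the \emph{opposite} direction: starting from a set of compact objects in $\mathscr{B}$, it tells you the quotient is compactly generated. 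It does not let you read compact generation of the kernel off of the quotient.

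Your fallback via Koszul complexes is the correct idea, and in fact is what actually does the work in the scheme case (this is essentially Thomason--Trobaugh). But two points deserve more care. First, in the affine case a single Koszul complex $K(f_1,\dots,f_n)$ already generates; the passage through varying $m$ and ``$I$-locally nilpotent modules are colimits of $\Hom_A(A/I^m,-)$'' is imprecise in the derived/spectral setting. Second, and more seriously, the sentence ``gluing these over a finite affine cover produces the required compact generators globally'' hides real content: producing \emph{global} perfect complexes supported on $Z$ that generate is exactly the Thomason--Trobaugh induction, and is not a triviality.

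The paper takes a different route: it does not pass through the localization sequence at all, but instead reduces via SAG~8.2.5.1 to compact generation of the connective part and then reruns Lurie's scallop-decomposition argument (SAG~9.6.1--9.6.2) for $\QC{X}$, arranging the decomposition to begin with a cover of the complement $U$ so that the induction lands directly in $\Gamma_Z$. This is effectively the spectral-scheme analogue of the Thomason--Trobaugh induction you gestured at, but carried out explicitly rather than invoked as a black box.
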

\begin{proof}
  By Proposition 8.2.5.1 of \cite{SAG}, we can reduce to showing the full category of connective objects
  is compactly generated (same as the reduction of Proposition 9.6.1.1 to Proposition 9.6.1.2 in \textit{loc. cit.}).
  Then, by choosing the scallop decomposition to start with a cover of the complement of $Z$, the same arguments
  (of Proposition 9.6.2.1 of \cite{SAG} which is just a rewording of Proposition 9.6.1.2) carries through completely.
\end{proof}

\begin{lemma} \label{conserve}
  For the closed immersion $z : Z \to X$
  \[\tilde{z}^*: \Gamma_Z(\QC{X}) \to \QC{Z}\]
  and
  \[\tilde{z}^{\times} : \Gamma_Z(\QC{X}) \to \QC{Z}\] are both conservative.
\end{lemma}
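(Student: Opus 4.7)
The statement is local on $X$, so we may reduce to the affine case $X = \Spec{A}$ with $A$ Noetherian and $Z = V(I)$ for a finitely generated ideal $I = (f_1, \dots, f_m)$. Under this identification, $\Gamma_Z(\QC{X})$ becomes the full subcategory of derived $I$-torsion complexes (those $M$ with $M \cong R\Gamma_I M$), and the two functors take the concrete form $\tilde{z}^*(M) \cong A/I \otimes^L_A M$ and $\tilde{z}^\times(M) \cong R\Hom_A(A/I, M)$.

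The engine in both cases is an inductive bootstrap along the short exact sequences
\[
0 \to I^n/I^{n+1} \to A/I^{n+1} \to A/I^n \to 0,
\]
whose left-hand term is an $A/I$-module. By the base change identities $N \otimes^L_A M \cong N \otimes^L_{A/I}(A/I \otimes^L_A M)$ and $R\Hom_A(N,M) \cong R\Hom_{A/I}(N, R\Hom_A(A/I, M))$ for any $A/I$-module $N$, the hypothesis $\tilde{z}^* M = 0$ forces $I^n/I^{n+1} \otimes^L_A M = 0$, and similarly $\tilde{z}^\times M = 0$ forces $R\Hom_A(I^n/I^{n+1}, M) = 0$. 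Induction on $n$ then yields $A/I^n \otimes^L_A M = 0$ (respectively $R\Hom_A(A/I^n, M) = 0$) for every $n \geq 1$.

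For $\tilde{z}^\times$ this concludes the argument: the colimit formula $R\Gamma_I M \cong \colim_n R\Hom_A(A/I^n, M)$ makes the right-hand side vanish, forcing $M \cong R\Gamma_I M = 0$. For $\tilde{z}^*$ one more step is required. Any classical $A$-module killed by a power of $I$ is an $A/I^n$-module for some $n$, hence tensors to zero with $M$ by base change. Applying this to the cohomologies of the stable Koszul (equivalently, shifted extended Čech) complex $\check{K}_\infty$ on $f_1, \dots, f_m$, which are each killed by a power of $I$, yields $\check{K}_\infty \otimes^L_A M = 0$, and the standard identification $\check{K}_\infty \otimes^L_A M \cong R\Gamma_I M \cong M$ gives $M = 0$. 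The main point I expect to require care is making this last identification precise in the unbounded derived setting; the $R\Hom$-side of the local cohomology formula (used for $\tilde{z}^\times$) is more directly available, whereas the tensor-side identification through $\check{K}_\infty$ takes a bit more unwinding from the filtered-colimit presentation of local cohomology.
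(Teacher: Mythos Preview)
Your argument is correct in the classical Noetherian setting, and the step you flag as delicate (identifying $\check K_\infty \otimes^L_A M$ with $R\Gamma_I M$) is standard for Noetherian $A$. The paper takes a closely related but more compressed route: rather than climbing the $I$-adic tower and then appealing to local-cohomology formulas, it works with the \emph{finite} Koszul complex $K = A/(t_1,\dots,t_m)$ on generators of $I$ as a single test object. Since each $t_i$ acts nullhomotopically on $K$, every homotopy group of $K$ is a discrete $\pi_0(A)/I$-module, so $K$ lies in the subcategory generated under colimits by such modules; the hypothesis $\tilde z^* M = 0$ (resp.\ $\tilde z^\times M = 0$) then forces $K \otimes_A M = 0$ (resp.\ $R\Hom_A(K,M)=0$), and one concludes $M = 0$ because $K$ generates $\Gamma_Z(\QC{X})$. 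This treats the two functors symmetrically, whereas your argument is asymmetric: the $\tilde z^\times$ case closes directly via $\colim_n R\Hom_A(A/I^n,M)$, while $\tilde z^*$ needs the extra pass through the stable Koszul complex.

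One point to watch in the paper's generality: the schemes are \emph{spectral}, so affine-locally $X = \Spec{R}$ and $Z = \Spec{S}$ with $R,S$ connective $E_\infty$-rings and only $\pi_0(S) \cong \pi_0(R)/I$. Your identification $\tilde z^*(M) \cong A/I \otimes^L_A M$ and the short exact sequences $0 \to I^n/I^{n+1} \to A/I^{n+1} \to A/I^n \to 0$ implicitly take $A$ discrete and identify the structure sheaf of $Z$ with $A/I$. The fix is easy---first pass from $S \otimes_R M = 0$ to $\pi_0(S) \otimes_R M = 0$ via $\pi_0(S) \cong S \otimes_S \pi_0(S)$, then run your $I$-adic induction over $\pi_0(R)$---but it is worth making explicit.
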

\begin{proof}
  Without loss of generality, let us assume that $X$ is affine and $X \cong \Spet{R}$.
  Then $Z \cong \Spet{S}$ where $\pi_0(R) \to \pi_0(S)$ is a surjective with finitely generated
  kernel.
  Let $(t_1,\ldots,t_n)$ denote a sequence (not necessarily regular) in $\pi_0(R)$ generating $I$. We let
  $R/(t_1,\ldots,t_n)$ be the (derived) $R$-module constructed from the Koszul complex on the sequence.
  Then the subcategory of $\Mod{R}$ generated under colimits by coherent (discrete) $\pi_0(S)$ modules (which is
  the same as the subcategory generated under colimits by just $\pi_0(S)$), thought of as $R$-modules,
  contains $R/(t_1,\ldots,t_n)$ because $\pi_i(R/(t_1,\ldots,t_n))$ is a finite $\pi_0(S)$ module for all $i$ and
  $R/(t_1,\ldots,t_n)$ is homologically bounded.
  Therefore, if $\tilde{z}^*M \cong 0$, then $M \otimes_R S \otimes_S \pi_0(S) \cong M \otimes_R \pi_0(S) \cong 0$ and
  hence $M \otimes R/(t_1,\ldots,t_n) \cong 0$ which shows that $M$ is zero, since it is supported
  on $Z$. The second claim similarly since the subcategory of $R$ modules generated under colimits
  from $S$ contains $\pi_0(S)$ and therefore also $R/(t_1, \ldots t_n)$.
\end{proof}
  
\begin{proposition} \label{AGinput2}
  Suppose $z : Z \to X$ is a closed immersion which factors through an étale map $u : U \to X$, then
  \[\Gamma_Z(\QC{X}) \cong \Gamma_Z(\QC{U})\]
\end{proposition}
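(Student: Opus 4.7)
The plan is to construct mutually inverse functors between $\Gamma_Z(\QC{X})$ and $\Gamma_Z(\QC{U})$, where by $\Gamma_Z(\QC{U})$ I mean sheaves on $U$ supported along $z'(Z)$, with $z' : Z \to U$ the lift of $z$ coming from the factorization. The starting point is the observation that, since $u$ is étale, the section $z'$ of the base-changed étale map $U \times_X Z \to Z$ is clopen, yielding a decomposition $u^{-1}(Z) = z'(Z) \sqcup Z''$ inside $U$.

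I would define $F := \Gamma_{z'(Z)} u^* i_Z : \Gamma_Z(\QC{X}) \to \Gamma_Z(\QC{U})$ and $G := u_* i_{z'(Z)}$. First, $G$ lands in $\Gamma_Z(\QC{X})$: for $\mathcal{F}$ supported on $z'(Z)$, base change along the open complement $j : X \setminus Z \hookrightarrow X$ gives $j^* u_* \mathcal{F} \cong (u')_*(j')^* \mathcal{F} = 0$, since $\mathcal{F}$ vanishes on $U \setminus u^{-1}(Z)$. Since all three factors of $F$ preserve colimits (using that $\Gamma_{z'(Z)}$ is colimit-preserving, per the discussion before Lemma \ref{compactgen}), $F$ admits a right adjoint; computing it as the composition of right adjoints of the three factors in reverse, it unwinds to exactly $G$.

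The heart of the proof is then to show that the unit $\eta : \id \to GF$ and counit $\epsilon : FG \to \id$ of the adjunction $F \dashv G$ are isomorphisms. By the conservativity of $\tilde{z}^*$ on $\Gamma_Z(\QC{X})$ and $\tilde{z'}^*$ on $\Gamma_{z'(Z)}(\QC{U})$ furnished by Lemma \ref{conserve}, it suffices to show $\tilde{z}^*\eta$ and $\tilde{z'}^*\epsilon$ are isomorphisms. The key computations are: (i) $\tilde{z'}^* F \cong \tilde{z}^*$, using $z'^* u^* = z^*$ together with the vanishing of $z'^*$ on the $\Gamma_{Z''}$-component of $u^* \mathcal{G}$; and (ii) $\tilde{z}^* G \cong \tilde{z'}^*$, via base change $z^* u_* \cong p_* i^*$ for the Cartesian square with fibre product $z'(Z) \sqcup Z''$, noting that the $Z''$-component of $i^*\mathcal{F}$ vanishes whenever $\mathcal{F}$ is supported on $z'(Z)$. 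Composing these yields natural isomorphisms $\tilde{z}^* GF \cong \tilde{z}^*$ and $\tilde{z'}^* FG \cong \tilde{z'}^*$.

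The main obstacle is the coherence check: one must verify that the natural isomorphisms produced by the above base-change arguments actually coincide with $\tilde{z}^* \eta$ and $\tilde{z'}^* \epsilon$, and not merely some other abstractly isomorphic natural transformations. This should follow from tracing through the standard compatibilities between base change and the units/counits of the adjunctions $u^* \dashv u_*$ and $i_{(-)} \dashv \Gamma_{(-)}$, but is the delicate bookkeeping step in the argument.
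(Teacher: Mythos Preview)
Your proposal is correct and follows essentially the same route as the paper. Both arguments construct the same pair of functors (your $F$ and $G$ are the paper's $\tilde{u}^*$ and $\tilde{u}_*$), invoke Lemma~\ref{conserve} to reduce to checking after pullback to $Z$, and then verify the key identity $z^*\,\tilde{u}_* \cong z'^*$ via base change along the square with fibre $u^{-1}(Z)$. The only difference is presentational: you use the explicit clopen decomposition $u^{-1}(Z) \cong z'(Z) \sqcup Z''$ to read off the result directly, whereas the paper packages the same computation as $\delta^*\delta_* \cong \id$ for the section $\delta : Z \hookrightarrow Z' = u^{-1}(Z)$, deduced from $Z \times_{Z'} Z \cong Z$ (since a section of an étale map is itself étale, hence a monomorphism). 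Your formulation is more concrete; the paper's is marginally more robust in that it only uses that $\delta$ is a monomorphism rather than the full clopen splitting. The coherence concern you flag at the end is genuine but is treated equally implicitly in the paper's proof.
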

\begin{proof}
  The pushforward functor $u_*$ induces a functor
  \[\tilde{u}_*:=\Gamma_Zu_*i_Z : \Gamma_Z(\QC{U}) \to \Gamma_Z(\QC{X})\]
  Also, the pullback functor from $\QC{X}$ to $\QC{U}$ gives a functor
  \[\tilde{u}^*:=\Gamma_Zu^*i_Z : \Gamma_Z(\QC{X}) \to \Gamma_Z(\QC{U})\]
  We wish to show that they are inverses, i.e. that $\tilde{u}^*\tilde{u}_* \cong \id$ and $\tilde{u}_*\tilde{u}^* \cong \id$.
  But, after the previous lemma, it suffices to check the equations after pulling back to $Z$. Hence, the only thing to check is that
  $z^*\tilde{u}_* \cong z_U^*$. Where $z_U : Z \to U$ is the closed immersion of $Z$ into $U$.

  Let $Z'= u^{-1}(Z)$, and $z' : Z' \to U$ be the closed immersion base-changed from $X$. Then, by base-change
  with the diagram (using Theorem \ref{BC1})
  \[
    \begin{tikzcd}
      Z'  \arrow{r}{z'} \arrow{d}{p} & U \arrow{d}{u}\\
      Z \arrow{r}{z} & X 
    \end{tikzcd}
  \]
  we have,
  \[z^*\tilde{u}_* \cong \tilde{p}_*\tilde{z}^{'*}\]
  where $p: Z' \to Z$ is the projection map and
  \[\tilde{z}^{'*} : \Gamma_Z(\QC{U}) \to \Gamma_Z(\QC{Z'}) \]
  is the pullback map, where the codomain is the quasicoherent sheaves on $Z' \cong Z \times_X U$ supported
  on the subscheme $Z$, thought of as a subscheme with the diagonal embedding $\delta$.
  Therefore, it suffices to show that $\tilde{\delta}^* \cong \tilde{p}_*$, as functors from
  $\Gamma_Z(\QC{Z'})$ to $\QC{Z}$. This can be shown after precomposing with $\tilde{\delta}_*$.
  Hence we have reduced to showing that $\delta^* \delta_* \cong \id$, which would follow from $Z \times_{Z'} Z \cong Z$
  (using the quasicoherent base-change isomorphism).
  But this is true because $Z \to Z'$ is étale as it is a section of an étale map (one can show it is formally étale after
  reducing to the case of underived schemes).
  
\end{proof}
\begin{remark}
  We note that is also the main input of the proof of
  Nisnevich descent in K-theory.
\end{remark}
\bibliography{ref}{}
\bibliographystyle{amsalpha}
\end{document}